             \newcommand{\edgegraph}{\includegraphics[width=4.2mm, height=2mm]{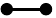}}
             \newcommand{\circlegraph}{\includegraphics[width=4.2mm, height=3.1mm]{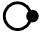}}
             \newcommand{\fundamentalDomain}{\includegraphics[width=37mm]{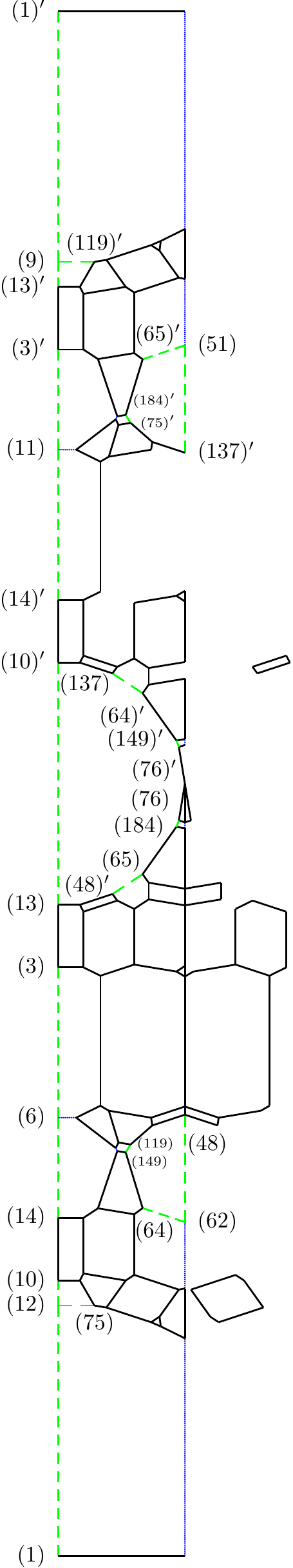}}
             \newcommand{\graphTwo}{\includegraphics[width=6mm, height=3.1mm]{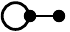}}
             \newcommand{\graphFive}{\includegraphics[width=4.2mm, height=3.1mm]{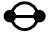}}
\newcommand{\xsp}{X_s^\prime}
\newcommand{\hyper}{\mathcal{H}}
 \newcommand{\SLO}{\mathrm{SL}_2\left(\mathcal{O}_{-m}\right)}
 \newcommand{\PSLO}{\mathrm{PSL}_2\left(\mathcal{O}_{-m}\right)}
 \newcommand{\leftSL}{\mathrm{SL}_2\left(\mathcal{O}_}
 \newcommand{\rightSL}{\right)}
 \newcommand{\SLtwo}{\mathrm{SL}_2}
 \newcommand{\PSLtwo}{\mathrm{PSL}_2}
\newcommand{\Z}{{\mathbb{Z}}}
\newcommand{\calO}{{\mathcal O}}
\newcommand{\ringOm}{\mathcal{O}_{-m}}
\newcommand{\rationals}{{\mathbb{Q}}}
\newcommand{\Center}{{Z(\Gamma)}}
\newcommand{\F}{{\mathbb{F}}}
\newcommand*{\Homol}{\operatorname{H}}
\newcommand*{\Cohomol}{\operatorname{H}}
\newcommand{\image}{{\rm image \thinspace}}
\newcommand{\sign}{{\rm sign}}
\newcommand{\Di}{{\bf Di}}
\newcommand{\Q}{{\bf Q}_8}
\newcommand{\Te}{{\bf Te}}
\newcommand{\sphere}{\mathbb{S}}
\theoremstyle{plain}
\newtheorem{thm}{\bfseries Theorem}
\newtheorem{theorem}[thm]{\bfseries Theorem}
\newtheorem{lemma}[thm]{\bfseries Lemma}
\newtheorem{proposition}[thm]{\bfseries Proposition}
\theoremstyle{remark}
\newtheorem{note}[thm]{\bfseries Note}
\newtheorem*{caveat}{\bfseries Caveat}
\newtheorem{df}[thm]{\bfseries Definition}
\newtheorem*{ConditionA}{\bfseries Condition A}
\newtheorem*{ConditionB}{\bfseries Condition B}
\newcommand{\cellCondition}{A}
\newcommand{\weakerCondition}{B}
\newtheorem{observation}[thm]{\bfseries Observation}
\theoremstyle{plain}
\newtheorem{prop}[thm]{Proposition}
\newtheorem{corollary}[thm]{Corollary}
\theoremstyle{definition}
\newtheorem{lemo}[thm]{($o$) Lemma}
\newtheorem{lemiota}[thm]{($\iota$) Lemma}
\newtheorem{lemtheta}[thm]{($\theta$) Lemma}
\newtheorem{lemthetarho}[thm]{($\theta$, $\rho$) Lemma}
\newtheorem{lemrho}[thm]{($\rho$) Lemma}
\newtheorem{defn}[thm]{Definition}
\newtheorem{rem}[thm]{Remark}
\newtheorem{notation}[thm]{Notation}
\newtheorem{remark}[thm]{\bfseries Remark}
\newcommand{\ef}{{\mathbb F}_2}
\newcommand{\Dthree}{{\bf D}_3}
\newcommand{\Dtwo}{{\bf D}_2}
\newcommand{\Af}{{\bf A}_4}
\newcommand*{\Homp}[1]{\operatorname{H}^{#1}_{{\rm red}}}
\newcommand*{\Home}[1]{\operatorname{H}^{#1}_{{\rm nil}}}
\newcommand{\ararrow}{\buildrel \alpha \over \rightarrow}
\newcommand{\brarrow}{\buildrel \beta \over \rightarrow}
\newcommand{\drarrow}{\buildrel \delta \over \rightarrow}
\DeclareMathOperator{\im}{im}
\DeclareMathOperator{\rank}{rank}
\begin{document}

\title[The mod 2 cohomology rings of $\SLtwo$ of the imaginary quadratic integers]{The mod 2 cohomology rings of \\ $\SLtwo$ of the imaginary quadratic integers}
\author[Ethan Berkove and Alexander D. Rahm]{Ethan Berkove and Alexander D. Rahm\\
(with an appendix by Aurel Page)}
\address{ Mathematics Department, Lafayette College,  Quad Drive, Easton, PA 18042, USA }
\email{berkovee@lafayette.edu}
\urladdr{http://sites.lafayette.edu/berkovee/}
\address{National University of Ireland, Department of Mathematics, University Road, Galway, Ireland}
\email{Alexander.Rahm@nuigalway.ie}
\urladdr{http://www.maths.nuigalway.ie/~rahm/}
\date{\today}
\subjclass[2000]{11F75, Cohomology of arithmetic groups.}

\begin{abstract}
We establish general dimension formulae for the second page of the equivariant spectral sequence of the action of the $\SLtwo$ 
groups over imaginary quadratic integers on their associated symmetric space.  
On the way, we extend the torsion subcomplex reduction technique to cases where the kernel of the group action is nontrivial. 
Using the equivariant and Lyndon--Hochschild--Serre spectral sequences, 
we investigate the second page differentials and show how to obtain the
mod~$2$ cohomology rings of our groups from this information.


\end{abstract}

\maketitle
\section{Introduction}
The objects of study in this paper are the groups $\SLtwo$ over the ring $\ringOm$
 of integers in the imaginary quadratic number field $\rationals(\sqrt{-m})$,
 with $m$ a square-free positive integer.  
These groups, as well as their central quotients $\PSLO$, are known as Bianchi groups.
The determination of the (co)homology of Bianchi groups, motivated in~\cite{Serre}, has a long history of case-by-case computations
(see \cite{higher_torsion} for a  list of references). 
This changed with the recently introduced technique of torsion subcomplex reduction, which provided general formulae 
for the Farrell cohomology of the $\PSLO$ groups for any $m$~\cite{AccessingFarrell}.
In this article, we extend the subcomplex reduction technique to the case where the kernel of a group action is nontrivial
in order to obtain the mod $2$ cohomology rings of the $\SLO$ groups.
For this purpose, we study a variant of the long exact sequence in Borel cohomology.
The reason why we consider only $\F_2$--coefficients is that for any prime $\ell > 2$,
 the Lyndon--Hochschild--Serre spectral sequence with $\F_\ell$--coefficients associated to the central extension  
\[
1 \rightarrow \{\pm 1\} \longrightarrow \SLtwo(\ringOm) \longrightarrow \PSLtwo(\ringOm)\rightarrow 1
\]
is concentrated in the horizontal axis, yielding 
$\Homol^*(\SLO;\thinspace \F_\ell) = \Homol^*(\PSLO;\thinspace \F_\ell)$.
Furthermore, Bianchi groups contain only $2$-- and $3$--torsion; results for 
$\Homol^*(\PSLO;\thinspace \F_3)$ can be found in ~\cite{AccessingFarrell}.

Philosophically, the cohomology of the Bianchi groups can be thought of as coming from two sources.
  Since the Bianchi groups act on a $2$--dimensional retract of hyperbolic $3$--space~$X$,
 the Bianchi groups have virtual (co)homological dimension $2$.
  Consequently, in dimensions $2$ and below,
 many of the cohomology classes of the Bianchi groups come from the topology of the quotient space
 and are detected with rational coefficients~\cite{SchwermerVogtmann}; 
calculations exclusively for rational coefficients have been carried out in~\cite{Vogtmann}.
 Above the virtual cohomological dimension, all cohomology classes are torsion and originate from finite subgroups.
  In particular, one can use the equivariant spectral sequence to determine the (co)homology of Bianchi groups.  
Results of the second author have made this precise.  
The article~\cite{RahmNoteAuxCRAS} 
introduced the $\ell$--torsion subcomplex, and contains a proof that this subcomplex determines the homology of $\PSLO$ above dimension~$2$; 
in addition, that the homology is a direct sum of generic modules 
associated to the homeomorphism types of connected components of the subcomplex.  

In this work, we will show how to extend the torsion subcomplex results from the projective special linear group to the linear group.  
The difficulty that we need to overcome is that the kernel of the action of $\SLO$ on $X$ is nontrivial.
We solve this problem by using a technique reminiscent of the long exact sequence in relative Borel cohomology associated to a pair of complexes  (see \cite{Henn} for a similar approach).  
This is the content of Section \ref{Section:E2 page}, where we describe the $E_2$ page of the equivariant spectral sequence in terms of components of the $2$-torsion subcomplex.  
We also need to determine whether any loops in the subcomplexes are homologous in the quotient  $_{\SLO} \backslash X$.  This interaction is tracked by the variable $c$ in our calculations.   The last piece of the puzzle in the determination of the mod $2$ cohomology of the $\SLtwo$ Bianchi groups
is the rank of the second page differential.  
We can say a lot about this rank by individually analyzing component types of a reduced $2$--torsion subcomplex 
using the cohomology ring structure over the Steenrod algebra. This is the subject of  Section~\ref{The second page differential on the components of the torsion subcomplex}.

We conclude with some sample calculations. For instance, Example $(\iota)$ in Section~\ref{examples-section} 
considers the case when $m \equiv 3 \bmod 8$,  $\rationals(\sqrt{-m})$ has precisely one finite ramification place over $\rationals$, and the ideal class number of the totally real number field $\rationals(\sqrt{m})$ is $1$.  These three conditions 
are equivalent to the quotient of the $2$--torsion subcomplex having the shape $\edgegraph$,
as worked out in~\cite{AccessingFarrell}.  Under these assumptions, our cohomology ring has the following dimensions:
$$
\dim_{\F_2}\Cohomol^{q}(\SLO; \thinspace \F_2)
=
\begin{cases}
   \beta^{1}  +\beta^{2} , &  q = 4k+5, \\
   \beta^1 +\beta^{2} +2, &  q = 4k+4, \\
   \beta^{1} + \beta^{2}+3, &  q = 4k+3, \\
   \beta^1+\beta^{2} +1, &  q = 4k+2, \\
   \beta^{1},  &  q = 1, \\
\end{cases}
$$
where $\beta^q := \dim_{\F_2}\Cohomol^q(_{\SLO} \backslash X ; \thinspace \F_2)$.  
Let $ \beta_1 := \dim_{\rationals}\Cohomol_1(_{\SLO} \backslash X ; \thinspace \rationals).$
For all absolute values of the discriminant less than $296$, numerical calculations yield
$\beta^2 +1 = \beta^1= \beta_1.$
In this range, the numbers $m$ subject to the above dimension formula and $\beta_1$ are given as follows
 (the Betti numbers are taken from~\cite{higher_torsion}).
$$\begin{array}{l|ccccccccccccccccccccccccccc}
m        &  11 & 19 & 43 & 59 & 67 & 83 & 107 & 131 & 139 & 163 & 179 & 211 & 227 & 251 & 283\\
 \hline 
\beta_1  &   1 & 1  &  2 &  4 & 3  &  5 &  6  &   8 &  7  &  7  &  10 &  10 & 12  & 14  & 13\\
\end{array}$$

We add a few remarks about the approach taken in this paper.  
The finite subgroups we consider have no subgroups isomorphic to $\Z/2 \oplus \Z/2$, 
so their cohomology is periodic of period $4$.  Furthermore, the restriction map on cohomology from any finite subgroup to the central 
$\Z/2$ subgroup is onto in dimensions divisible by $4$.  
We expect something similar for the Bianchi groups we consider, since by results of Quillen \cite{Quillen} 
their mod-$2$ cohomology is $F$-isomorphic to the cohomology of the maximal abelian subgroups, here the center $\Z/2$. 
We show this in  Proposition~\ref{prop:Bianchiperiodicity}, 
where we identify a class $\alpha$ in dimension $4$ which provides the periodicity.
We thank the referee for pointing out that this class $\alpha$ is the second Chern class of the natural representation of $\mathrm{SL}_2(\mathbb{C})$.

Under these observations, one might expect that a reasonable alternative  approach to calculating cohomology would come from using the Lyndon--Hochschild--Serre spectral sequence associated to the short exact sequence
\[
1 \rightarrow \{\pm 1\} \longrightarrow \SLtwo(\ringOm) \longrightarrow \PSLtwo(\ringOm)\rightarrow 1
\]
to determine cohomology up to degree $5$.  
Unfortunately, although one can often easily determine $d_2$ and $d_3$
for this spectral sequence, calculations in an unpublished manuscript by the first author yield that in cases with discriminant as low as 
$m = 3$ and $11$, there is a non-trivial internal $d_4$ differential whose existence can only be determined by knowing 
$\Cohomol^*( \SLtwo(\ringOm))$ beforehand.  For this reason, an approach relying exclusively on this spectral sequence does not appear feasible.

This paper grew out of an attempt to take advantage of much of what is already known about the $\PSLO$ Bianchi groups.  
Results in \cite{RahmNoteAuxCRAS}, for example, show that  information about 
$\Cohomol^*(\PSLO)$ 
above the cohomological dimension resides in ``geometric'' data. 
Specifically, for a large range of values of $m$, 
the geometric data resides in four types of subcomplexes in the quotient $_{\SLO} \backslash X$
 whose multiplicity we are going to denote by $o$, $\iota$, $\theta$, and $\rho$.  
Other calculations exist for the rational homology of $_{\SLO} \backslash X$  
(\cite{Vogtmann}, for example) which provide values of the Betti numbers $\beta_1$ and $\beta_2$ for the quotient space.  
Our work shows that these data are almost sufficient for a full answer, and give very tight bounds on the cohomology of $\SLO$ 
in all dimensions.  We note that prior machine calculations for 
$\Cohomol^*(\SLO)$ have been performed for many values of $m$, 
but the time to complete a calculation usually increases with $m$.  
Consequently, the results in this paper provide a constructive argument which can be used to complement, corroborate, and extend existing results.

\textit{Acknowledgments.} The first author thanks the De Br\'un Centre for its hospitality and for funding a stay at NUI Galway devoted to the present work.
We are grateful for a careful check by Norbert Kr\"amer,
helpful comments by Matthias Wendt on the core of our long exact sequence analysis, 
and for valuable assistance by Graham Ellis and Tuan Anh Bui on importing our cell complexes into HAP~\cite{HAP}
 for the example calculations.  Of special note, Tuan Anh Bui's resolutions for the cusp stabilizer groups have been of great help.  
We would even more like to thank the anonymous referee, who provided an approach which streamlined a number of our original arguments, and whose careful and thoughtful comments helped to  greatly improve the overall quality of this paper.  

\section{Spectral sequences and Central Extensions}\label{sec:ss}

The calculations in this paper involve two spectral sequences.  
We use the Lyndon--Hochschild--Serre spectral sequence to determine cohomology of groups via group extensions.  
We also use the equivariant spectral sequence since the Bianchi groups act cellularly 
on low-dimensional contractible complexes.  We introduce both spectral sequences briefly in this section; 
more details can be found in~\cite{AdemMilgram}, \cite{Brown}, and~\cite{McCleary}.

For the development of the Lyndon--Hochschild--Serre spectral sequence we follow the approach in~\cite{AdemMilgram}.  
Given a short exact sequence of groups 
\begin{equation}
1 \rightarrow H \rightarrow \Gamma \buildrel \pi \over \longrightarrow  \Gamma/H \rightarrow 1  \label{sss:shortexactseq},
\end{equation}
there is an associated fibration of classifying spaces.
One can then apply the Leray-Serre spectral sequence (\cite{McCleary}, Chapters 5 and 6).  This spectral sequence has  $E_2^{i,j} \cong \Homol^i(\Gamma/H; \Homol^j(H;M))$  for untwisted coefficients $M$ and converges to $\Homol^{i+j}(\Gamma;M)$.   We note that the Leray-Serre spectral sequence can also be developed more algebraically, as in~\cite{Brown}, VII.5.

In the short exact sequence of groups in Equation \ref{sss:shortexactseq}, when the normal subgroup $H$ is central in $\Gamma$, it is possible to say more.  
This is a central extension, and in such cases $\Gamma/H$ acts trivially on the homotopy fiber $B_H$. 
Then with field coefficients $M$, the $E_2$--term of the resulting spectral sequence has the form 
\[
E_2^{i,j} \cong \Homol^i(\Gamma/H; M) \otimes \Homol^j(H;M).
\]
For the remainder of this article, unless specified otherwise, we only consider cohomology with (necessarily trivial) $\F_2$-coefficients. We omit these coefficients from the notation.

Lemma IV.1.12 in~\cite{AdemMilgram} identifies a unique cohomology class $k$,  called the $k$--\textit{invariant}, which generates the kernel of $B_{\pi}^*: \Homol^2(\Gamma/H) \rightarrow \Homol^2(\Gamma)$.  The $k$--invariant is also the cohomology class associated to the extension.  The Leray-Serre spectral sequence, of which the Lyndon--Hochschild--Serre spectral sequence is 
just one example, has many useful properties.  It is compatible with cup products, for example.  

For the equivariant spectral sequence, we follow the development in~\cite{Brown}*{Chapter VII}. 
 Let the group $\Gamma$ act cellularly on a CW-space $X$ in such a way that the stabilizer of any cell fixes that cell point-wise.  
Consider the \textit{equivariant cohomology groups} $\Homol^*(\Gamma, C^\bullet(X))$  
with coefficients in the cellular co-chain complex $C^\bullet(X)$; in our setting, our co-chains will take on values in $\ef$.  
One can define these cohomology groups by taking a projective $\ef [\Gamma]$-resolution, $F$, of $\ef$,
 then setting $\Homol^*(\Gamma, C^\bullet(X)) = \Homol^*({\rm Hom}_{\F_2[\Gamma]} (F,C^\bullet(X)))$.   When $X$ is a contractible space, 
the equivariant cohomology groups can be identified with the
cohomology of $\Gamma$, as
\[ 
\Homol^*(\Gamma, C^\bullet(X)) \cong \Homol^*(\Gamma, C^\bullet({\rm point})) \cong \Homol^*(\Gamma).
\]
Using the horizontal and vertical filtrations of the double complex ${\rm Hom}_{\Gamma} (F,C^\bullet(X))$ we get a spectral sequence with 
\[
E_1^{i,j} \cong \Homol^j(\Gamma, C^i(X))
\]
 converging to $\Cohomol^{i+j}(\Gamma)$.     Let $X^{(i)}$ be a set of representatives of $i$--cells in $X$
 and let $\Gamma_\sigma$ be the stabilizer in $\Gamma$ of a cell $\sigma$.  Then 
\[
C^i(X)   = \prod\limits_{\sigma \in X^{(i)}} \ef \cong \prod \limits_{\sigma \thinspace\in \thinspace _\Gamma \backslash X^{(i)}} {\rm Coind}^\Gamma_{\Gamma_\sigma} \ef .
\]
Once we apply Shapiro's Lemma and sum over the representatives of $i$--cells in $\Gamma \backslash X_i$, the spectral sequence takes the form
$
E_1^{i,j} \cong \prod\limits_{\sigma \thinspace\in \thinspace _\Gamma \backslash X^{(i)}} \Homol^j(\Gamma_\sigma).
$

The equivariant  spectral sequence has a number of additional desirable properties that we will use in the sequel.  
\begin{enumerate}
\item\label{ss1} There is a product on the spectral sequence, $E_r^{pq}  \otimes E_r^{st} \rightarrow E_r^{p+s,q+t}$, which is compatible with the standard cup product on $\Cohomol^*(\Gamma)$  \cite{Brown}*{VII.5}
\item\label{ss2} On the $E_2$--page,
the products in  $E_2^{0,*}$, the vertical edge, are compatible with the products in ${\prod}_ {\sigma \in _\Gamma \backslash X^{(0)} }   \Cohomol^q(\Gamma_{\sigma})$.  \cite{Brown}*{X.4.5.vi}.
\item\label{ss3} The differential $d_1$ is a difference of restriction maps (cohomology 
analog of~\cite{Brown}*{VII.8})
\[
\prod_{\sigma \thinspace\in \thinspace _\Gamma\backslash X^{(i)}} \Homol^j (\Gamma_\sigma)
\xrightarrow{\ d_1^{i,j}}\ 
\prod_{\tau \thinspace\in \thinspace _\Gamma\backslash X^{(i+1)}} \Homol^j(\Gamma_\tau).
\]
\end{enumerate}



\begin{rem}\label{rem:derivation}
In both the equivariant and Lyndon--Hochschild--Serre spectral sequences, the differentials are derivations.  Working mod $2$, this means that given cohomology classes $u, v \in E_r$, the differential satisfies the Leibniz rule, 
\[
d_r(u v) = d_r(u) v +  u d_r(v).
\] 

A proof that the differential is a derivation for the Leray Serre spectral sequence can be found in \cite{McCleary} (Section 1.4 and Theorem 2.14).  For the equivariant spectral sequence, we lay out a sketch following Brown \cite{Brown}*{X.4}.   Recall that $\Homol^*(\Gamma, C^\bullet(X)) = \Homol^*({\rm Hom}_{\Gamma} (F,C^\bullet(X)))$, 
and note that  $F \otimes F$ is also a projective $\ef [\Gamma]$-resolution of $\ef$.  Therefore, there is a diagonal approximation $\triangle: F \rightarrow F \otimes F$.  Consider the composition
\begin{equation}
\begin{split}
{\rm Hom}_{\Gamma} (F,C^\bullet(X)) \otimes  {\rm Hom}_{\Gamma} (F,C^\bullet(X)) & \longrightarrow  {\rm Hom}_{\Gamma} (F \otimes F,C^\bullet(X) \otimes C^\bullet(X)) \\		
										 	    & \buildrel\triangle\over\longrightarrow  {\rm Hom}_{\Gamma} (F,C^\bullet(X) \otimes C^\bullet(X)) \\	            
										              & \buildrel\cup\over\longrightarrow  {\rm Hom}_{\Gamma} (F,C^\bullet(X))
\end{split}
\end{equation}
where $\cup$ is the co-chain cup product on $C^\bullet(X)$.  The composition respects both the horizontal and vertical filtrations of the double complex ${\rm Hom}_{\Gamma} (F,C^\bullet(X))$.  
This immediately implies Property~\ref{ss1} for the equivariant spectral sequence.  And since $d_r$ is induced from the product on the double complex ${\rm Hom}_{\Gamma} (F,C^\bullet(X))$ which satisfies the Leibniz rule, the differential $d_r$ satisfies the Leibniz rule as well.  We note that in the equivariant spectral sequence, in light of the composition given above, it may be difficult to calculate the products in the derivation. For further details on this construction we direct the reader to Brown \cite{Brown}*{X.4}.     
Having the differential be a derivation is helpful.  In particular, it implies two useful results: 
1) when $d_r(u) = 0$, then $d_r(uv) = u d_r(v)$; and 2) for $r \geq 2$, when $u$ is a class on the $E_r$ page then 
$d_r(u^2) =2ud_r(u) = 0$, as the product on the $E_r$ page is commutative for $r \geq 2$ \cite{Brown}*{X.4.5.iv}. 
\end{rem}

Finally, for both spectral sequences, one can define Steenrod operations which are compatible with differentials as well as the Steenrod squares in the abutment.   A complete account can be found in \cite{Singer}.  
However, we will only need the following facts:

\begin{enumerate}
 \item  There are well-defined operations, $Sq^k: E_r^{p,q} \rightarrow E_r^{p,q+k}$ when $0 \leq k \leq q$ \mbox{\cite{Singer}*{Theorem 2.15}.}
 \item  When $k \leq q - 1$, $d_2 Sq^k u = Sq^k d_2 u$, i.e., the operations commute with the differential  \cite{Singer}*{Theorem 2.17}.
 \item  $Sq^k$ commutes with the appropriate differentials to give the Kudo transgression theorem. \cite{Singer}*{Corollary 2.18}.
\end{enumerate}

\section{The non-central torsion subcomplex} \label{The non-central torsion subcomplex}

In this section we recall the $\ell$--torsion subcomplexes theory of~\cite{AccessingFarrell} and compare it with the
 \textit{non-central} $\ell$--torsion subcomplexes we are going to study. 
We require any discrete group $\Gamma$ 
under our study to be provided with what we will call a \textit{polytopal $\Gamma$--cell complex}, that is, 
a finite-dimensional simplicial complex $X$ with cellular 
$\Gamma$--action such that each cell stabilizer fixes its cell point-wise. 
In practice, we relax the simplicial condition to a polyhedral one,
merging finitely many simplices to a suitable polytope.
We could obtain the simplicial complex back as a triangulation.

\begin{df}
 Let $\ell$ be a prime number. The \emph{$\ell$--torsion subcomplex} of a polytopal $\Gamma$--cell complex~$X$
 consists of all the cells of $X$ whose stabilizers in~$\Gamma$  contain elements of order $\ell$.
\end{df}
We further require that the fixed point set~$X^G$ be acyclic for every nontrivial finite $\ell$--subgroup $G$ of~$\Gamma$.
Then Brown's proposition X.(7.2)~\cite{Brown} specializes as follows. 
\begin{proposition} \label{Brownian}
 There is an isomorphism between the $\ell$--primary parts of the Farrell cohomology of~$\Gamma$ and the
 $\Gamma$--equivariant Farrell cohomology of the $\ell$--torsion subcomplex.
\end{proposition}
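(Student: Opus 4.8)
The plan is to obtain the statement as a direct specialization of Brown's Proposition X.(7.2): its hypothesis, the acyclicity of the fixed point sets of nontrivial finite $\ell$--subgroups, is exactly the standing assumption just imposed, so the only work is to check that the $\ell$--torsion subcomplex is an admissible input and then to invoke the proposition. Write $X_\ell$ for the $\ell$--torsion subcomplex. First I would record that $X_\ell$ is a $\Gamma$--invariant subcomplex: invariance holds because the property ``$\Gamma_\sigma$ contains an element of order $\ell$'' is preserved under the $\Gamma$--action, and $X_\ell$ is closed under taking faces since $\Gamma_\sigma \leq \Gamma_\tau$ whenever $\tau$ is a face of $\sigma$ (an element fixing $\sigma$ pointwise fixes each face pointwise), so that $\ell \mid |\Gamma_\sigma|$ forces $\ell \mid |\Gamma_\tau|$. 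Hence the equivariant Farrell cohomology of $X_\ell$ is defined.

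The key step is the fixed-point comparison $(X_\ell)^G = X^G$ for every nontrivial finite $\ell$--subgroup $G \leq \Gamma$. The inclusion $(X_\ell)^G \subseteq X^G$ is clear. For the reverse, take $x \in X^G$ and let $\sigma$ be the open cell containing $x$; since every $g \in G$ fixes $x$, it sends $\sigma$ to the open cell through $x$, which is again $\sigma$, so $G$ stabilizes $\sigma$ setwise. Because a cell stabilizer in a polytopal $\Gamma$--cell complex fixes its cell pointwise, this gives $G \leq \Gamma_\sigma$, whence $\ell \mid |\Gamma_\sigma|$, so $\sigma \in X_\ell$ and $x \in (X_\ell)^G$. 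By the standing hypothesis $X^G$ is acyclic, and therefore so is $(X_\ell)^G$; thus $X_\ell$ satisfies the hypothesis of Brown's Proposition X.(7.2).

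Applying that proposition to $X_\ell$ then delivers the asserted isomorphism between the $\ell$--primary part of $\Farrell^*(\Gamma)$ and the $\Gamma$--equivariant Farrell cohomology of $X_\ell$. I would also recall the mechanism, without reproving it: on the $E_1$--page of the equivariant Farrell spectral sequence the cell $\sigma$ contributes $\Farrell^q(\Gamma_\sigma)$, and for a cell outside $X_\ell$ the finite stabilizer $\Gamma_\sigma$ has order prime to $\ell$ (by Cauchy's theorem, the absence of order-$\ell$ elements gives $\ell \nmid |\Gamma_\sigma|$); since $|\Gamma_\sigma|$ annihilates $\Farrell^*(\Gamma_\sigma)$ and is invertible $\ell$--locally, these contributions vanish in the $\ell$--primary part, so only the cells of $X_\ell$ survive.

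The main obstacle is precisely the fixed-point comparison of the second paragraph, namely verifying that the acyclicity hypothesis transfers verbatim from $X$ to $X_\ell$; once this identification of fixed point sets is secured, the proposition follows formally. A secondary point to treat with care is the interaction of the passage to $\ell$--primary parts with the product over cell representatives on the $E_1$--page, which causes no trouble because this passage is exact and respects the product in each bidegree.
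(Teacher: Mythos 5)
Your proposal is correct and takes essentially the same route as the paper: the paper's entire ``proof'' consists of imposing the standing acyclicity assumption on the fixed point sets $X^G$ and citing Brown's Proposition X.(7.2), which is exactly what you do. The verifications you add --- that $X_\ell$ is a face-closed $\Gamma$--subcomplex, the fixed-point identification $(X_\ell)^G = X^G$, and the vanishing of the $\ell$--primary contributions of cells outside $X_\ell$ on the $E_1$--page via Cauchy's theorem --- are precisely the routine checks that the paper (and Brown's proposition) leave implicit, and they are all sound.
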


For a given Bianchi group, the non-central torsion subcomplex can be quite large.  
It turns out to be useful to reduce this subcomplex, and we identify two conditions under 
which we can do this in a way that Proposition~\ref{Brownian} still holds.
 
\begin{ConditionA} \label{cell condition}
In the $\ell$--torsion subcomplex, let $\sigma$ be a cell of dimension $n-1$ which lies in the boundary of precisely two $n$--cells 
representing different orbits, $\tau_1$ and~$\tau_2$.   Assume further that no higher-dimensional cells of the $\ell$--torsion subcomplex touch $\sigma$;
and that the $n$--cell stabilizers admit an isomorphism
$\Gamma_{\tau_1} \cong \Gamma_{\tau_2}$. 
\end{ConditionA}

\begin{ConditionB} 
The inclusion of the cell stabilizers $\Gamma_{\tau_1}$ and
$\Gamma_{\tau_2}$ into $\Gamma_\sigma$ induces isomorphisms on$\mod \ell$ cohomology.
\end{ConditionB}

When both conditions are satisfied in the $\ell$--torsion subcomplex,  we merge the cells $\tau_1$ and $\tau_2$ along~$\sigma$ and 
do so for their entire orbits.  The effect of this merging is to decrease the size of the  $\ell$--torsion subcomplex without changing its
$\Gamma$--equivariant Farrell cohomology.  This process can often be repeated: by a ``terminal vertex,'' we will denote a vertex with 
no adjacent higher-dimensional cells and precisely one adjacent edge in the quotient space,  and by ``cutting off'' the latter edge,
 we will mean that we remove the edge together with the terminal vertex from our cell complex.
\begin{df}
 A \emph{reduced $\ell$--torsion subcomplex} associated to a polytopal $\Gamma$--cell complex~$X$
 is a cell complex obtained by recursively merging orbit-wise all the pairs of cells satisfying 
 conditions~$\cellCondition$ and~$\weakerCondition$,
 and cutting off edges that admit a terminal vertex when condition~$\weakerCondition$ is satisfied.
\end{df}
The following theorem, stating that Proposition~\ref{Brownian} still holds after reducing, is proved in~\cite{AccessingFarrell}.
\begin{theorem} \label{pivotal}
 There is an isomorphism between the $\ell$--primary parts of the Farrell cohomology of~$\Gamma$ and the
 $\Gamma$--equivariant Farrell cohomology of a reduced $\ell$--torsion subcomplex.
\end{theorem}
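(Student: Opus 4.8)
The plan is to derive Theorem~\ref{pivotal} from Proposition~\ref{Brownian} by an induction on the number of elementary reduction steps. A reduced $\ell$--torsion subcomplex is produced from the full $\ell$--torsion subcomplex $T$ by finitely many operations of two types: merging a pair of $n$--cells $\tau_1,\tau_2$ along a common boundary $(n-1)$--cell $\sigma$ when Conditions~\cellCondition{} and~\weakerCondition{} hold, and cutting off an edge at a terminal vertex when Condition~\weakerCondition{} holds. Proposition~\ref{Brownian} supplies the base case, identifying the $\ell$--primary part of $\Farrell^*(\Gamma)$ with the $\Gamma$--equivariant Farrell cohomology of $T$. Thus it suffices to show that a single reduction of either type leaves the $\Gamma$--equivariant Farrell cohomology unchanged, and then to compose the resulting isomorphisms.

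For the inductive step I would work with the cellular cochain complex $C^\bullet$ of the current torsion subcomplex, viewed as a complex of $\F_\ell[\Gamma]$--modules, together with a complete resolution $\widehat{F}$ of $\F_\ell$; the $\Gamma$--equivariant Farrell cohomology is the cohomology of $\mathrm{Hom}_{\Gamma}(\widehat{F}, C^\bullet)$, and the associated spectral sequence has $E_1$--term $\prod_{\sigma} \Farrell^*(\Gamma_\sigma)$ with $d_1$ a difference of restriction maps, exactly as in property~\ref{ss3}. Consider a merge. Because $\Gamma_{\tau_j} \subseteq \Gamma_\sigma$, the incidence of $\sigma$ with $\tau_1$ and $\tau_2$ contributes the map
\[
\Farrell^*(\Gamma_\sigma) \longrightarrow \Farrell^*(\Gamma_{\tau_1}) \times \Farrell^*(\Gamma_{\tau_2}),
\qquad x \longmapsto (\operatorname{res} x,\ -\operatorname{res} x),
\]
and Condition~\weakerCondition{} makes each restriction an isomorphism. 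Condition~\cellCondition{}---exactly two cofaces from distinct orbits and no higher--dimensional cells of $T$ meeting $\sigma$---guarantees that the summand $\Farrell^*(\Gamma_\sigma)$ receives no differential from degree $n-2$ inside $T$ and maps only into these two factors. I can therefore pair $\Farrell^*(\Gamma_\sigma)$ with, say, $\Farrell^*(\Gamma_{\tau_2})$ along the isomorphism $\operatorname{res}$ and perform an algebraic cancellation (Gaussian elimination, equivalently an algebraic deformation retraction). Since such a cancellation is a chain homotopy equivalence, it preserves cohomology; what remains in degree $n$ is the single factor $\Farrell^*(\Gamma_{\tau_1})$, which I identify with the contribution $\Farrell^*(\Gamma_\tau)$ of the merged cell $\tau=\tau_1\cup_\sigma\tau_2$ by means of the isomorphism $\Gamma_{\tau_1}\cong\Gamma_{\tau_2}$ from Condition~\cellCondition{} and the restriction isomorphisms of Condition~\weakerCondition{}.

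The edge--cut is the same mechanism in degrees $0$ and $1$. A terminal vertex $v$ with its unique incident edge $e$ satisfies $\Gamma_e\subseteq\Gamma_v$, so the corresponding component of $d_1$ is the restriction $\Farrell^*(\Gamma_v)\to\Farrell^*(\Gamma_e)$, which Condition~\weakerCondition{} renders an isomorphism. As $v$ meets no other cell of $T$, the pair $(v,e)$ can be cancelled directly; this removes both cells and produces precisely the cochain complex of the subcomplex with the edge cut off, without altering cohomology.

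The step I expect to be the main obstacle is the bookkeeping of the differentials produced by the cancellation. Gaussian elimination modifies each surviving differential by a zig--zag correction---an outgoing differential composed with the inverse of the cancelled restriction isomorphism and an incoming differential---and I must verify that these corrections reproduce \emph{exactly} the incidence maps of the reduced subcomplex: for the merge, that the restrictions into and out of the merged cell $\tau$ are the correct ones, and that the merged stabilizer $\Gamma_\tau=\Gamma_{\tau_1}\cap\Gamma_{\tau_2}$ carries the same Farrell cohomology as $\Gamma_{\tau_1}$; for the edge--cut, that the differential emanating from the far endpoint of $e$ is unaffected. A second, related point is that the entire construction must be carried out $\Gamma$--equivariantly over the full orbits of $\sigma$, $\tau_1$ and $\tau_2$ simultaneously, so that the cancellations over different cells in the same orbit remain mutually consistent; here the hypotheses of Condition~\cellCondition{} (distinct orbits and a stabilizer isomorphism) are used in full.
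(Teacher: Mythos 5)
You should know at the outset that the paper never proves Theorem~\ref{pivotal}: it explicitly states that the result ``is proved in~\cite{AccessingFarrell}'' and simply imports it, so the only meaningful comparison is with the argument of that reference. Your skeleton --- induction over elementary reductions with Proposition~\ref{Brownian} as base case, and cancellation of an isomorphic component $\Farrell^*(\Gamma_\sigma)\to\Farrell^*(\Gamma_{\tau_2})$ (respectively $\Farrell^*(\Gamma_v)\to\Farrell^*(\Gamma_e)$ for a cut) in the $E_1$-term of the equivariant Farrell spectral sequence --- is the natural one and is in the same spirit as the cited proof. Note also that after Shapiro's lemma the $E_1$-term has exactly one factor per \emph{orbit}, so the orbit-consistency worry in your last paragraph takes care of itself once you work on that page.

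Two points, however, are genuine gaps. (1)~Your inductive step stops one step too early: Gaussian elimination gives a homotopy equivalence of $E_1$-complexes, hence an isomorphism of $E_2$-pages, but the theorem asserts an isomorphism of the \emph{abutments}, and an abstract isomorphism of $E_2$-terms does not by itself identify filtered abutments (differentials and extension problems intervene). You must realize each reduction by a map of filtered complexes. For a cut this is exactly the mechanism the paper itself uses in Section~\ref{Section:E2 page}: writing $T$ for the subcomplex before the cut, $T'$ for the cut one, and $v$, $e$ for the removed vertex and edge, the surjection $C^\bullet(T)\to C^\bullet(T')$ has as kernel the relative complex, whose $E_1$-term is precisely your two-term complex $\Farrell^*(\Gamma_v)\xrightarrow{\ \cong\ }\Farrell^*(\Gamma_e)$, acyclic by Condition~$\weakerCondition$ and terminality of $v$; since the column filtration is finite, the long exact sequence then identifies the abutments. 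For a merge, use the cochain map induced by coarsening the CW structure. (2)~The merged-stabilizer issue you flag is not a loose end but a hole in the formulation: under Conditions~$\cellCondition$ and~$\weakerCondition$ alone, $\Gamma_{\tau_1}$ and $\Gamma_{\tau_2}$ are merely abstractly isomorphic, and nothing controls the cohomology of the pointwise stabilizer $\Gamma_{\tau_1}\cap\Gamma_{\tau_2}$ of the union, so the coarsened $\Gamma$-space need not have the $E_1$-term you claim for the reduced complex. The statement only makes sense, and your cancellation only reproduces it, if the merged cell carries the stabilizer datum $\Gamma_{\tau_1}$; in the geometric situations where the theorem is applied (cells along a rotation axis) one has $\Gamma_{\tau_1}=\Gamma_{\tau_2}$ as subgroups of $\Gamma_\sigma$, which is what makes this convention consistent and the coarsening map of point~(1) available. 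A smaller mismatch: the theorem concerns $\ell$-primary parts of integral Farrell cohomology, while you work with $\F_\ell$-coefficients throughout; since Condition~$\weakerCondition$ is a mod-$\ell$ hypothesis, a Bockstein-type upgrade (or working with the $\ell$-primary component directly, as in Brown's Chapter~X framework) is still needed.
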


In the case of a trivial kernel of the action on the polytopal $\Gamma$--cell complex, 
this allows one to establish general formulae for the Farrell cohomology of~$\Gamma$ \cite{AccessingFarrell}.
In contrast, our action of $\SLO$ on hyperbolic $3$-space has the $2$-torsion group $\{\pm 1\}$ in the kernel; 
since  every cell stabilizer contains  $2$-torsion,
the $2$--torsion subcomplex does not ease our calculation in any way. 
We can remedy this situation by considering the following object, on whose cells we impose a supplementary property.

\begin{df} \label{non-central torsion subcomplex}
 The \emph{non-central $\ell$--torsion subcomplex} of a polytopal $\Gamma$--cell complex $X$
 consists of all the cells of $X$
 whose stabilizers in~$\Gamma$  contain elements of order $\ell$ that are not in the center of~$\Gamma$.
\end{df}

We note that this definition yields a correspondence between, on one side, the \textit{non-central}
$\ell$--torsion subcomplex for a group action with kernel the center of the group,
 and on the other side, the $\ell$--torsion subcomplex for its central quotient group.
 We use this correspondence in order to identify the \textit{non-central} $\ell$--torsion subcomplex for the action of $\SLO$
 on hyperbolic $3$--space as the $\ell$--torsion subcomplex of $\PSLO$.  
  However, incorporating the non-central condition for $\SLO$ introduces significant technical obstacles, 
  which we address in Section~\ref{Section:E2 page}.

We recall the following information from~\cites{AccessingFarrell} on the $\ell$--torsion subcomplex of $\PSLO$. Let~$\Gamma$ be a finite index subgroup in $\text{PSL}_2(\mathcal{O}_{-m})$. 
Then any element of~$\Gamma$ fixing a point inside hyperbolic $3$-space~$\hyper$ acts as a rotation of finite order.
By Felix Klein's work, we know conversely that any torsion element~$\alpha$ is elliptic and hence fixes some geodesic line.
We call this line \emph{the rotation axis of~$\alpha$}.
Every torsion element acts as the stabilizer of a line conjugate to one passing through the Bianchi fundamental polyhedron.
We obtain the \textit{refined cellular complex} from the action of~$\Gamma$ on~$\hyper$
 as described in~\cite{RahmTorsion}, 
namely we subdivide~$\hyper$  until the stabilizer in~$\Gamma$ of any cell $\sigma$ fixes $\sigma$ point-wise. 
We achieve this by computing Bianchi's fundamental polyhedron for the action of~$\Gamma$,
 taking as a preliminary set of 2-cells its facets lying on the Euclidean hemispheres
 and vertical planes of the upper-half space model for $\hyper$,
 and then subdividing along the rotation axes of the elements of~$\Gamma$. 
 
It is well-known~\cite{SchwermerVogtmann} that if $\gamma$ is an element of finite order $n$ in a Bianchi group, then $n$ must be 1, 2, 3, 4 or 6,
 because $\gamma$ has eigenvalues $\rho$ and $\overline{\rho}$,
 with $\rho$ a primitive $n$--th root of unity, and the trace of~$\gamma$ is $\rho + \overline{\rho} \in \calO_{-m} \cap {\mathbb{R}} = \Z$.   
 When $\ell$ is one of the two occurring prime numbers $2$ and~$3$, the orbit space of this sub-complex is a graph,
 because the cells of dimension greater \mbox{than 1} are trivially stabilized in the refined cellular complex.
We can see that this graph is finite either from the finiteness of the Bianchi fundamental polyhedron, 
or from studying conjugacy classes of finite subgroups as in~\cite{Kraemer}.
 
As in \cite{RahmFuchs}, we make use of a $2$-dimensional deformation retract $X$ of the refined cellular complex, 
equivariant with respect to a Bianchi group \mbox{$\Gamma := \SLO$}.  This retract has a cell structure 
in which each cell stabilizer fixes its cell point-wise.  
Since $X$ is a deformation retract of $\hyper$ and hence acyclic,  $\Cohomol^*_\Gamma(X) \cong \Cohomol^*_\Gamma(\hyper) \cong \Cohomol^*(\Gamma)$.
In what follows, we need to know about the subgroups of finite order in Bianchi groups, as these appear as cell stabilizers.  The subgroups are given in Table \ref{table:covering} for both the $\SLtwo$ and $\PSLtwo$ Bianchi groups. 
 
\begin{caveat}
 In order for the non-central $2$-torsion subcomplex to be contained in the deformation retract $X$,
 we exclude $\calO_{-m}$ from being the ring of Gaussian integers $\calO_{-1}$ for the entirety of this article.
 This does not cause any harm, as in this special case, all of the cohomology can be computed by hand~\cite{SchwermerVogtmann}.  
\end{caveat}

From two theorems of Norbert Kr\"amer, 
we deduce that there are only four types of connected components of non-central reduced $2$--torsion subcomplex quotients possible for the  $\PSLtwo$,  
and hence $\SLtwo$, Bianchi groups.  
This is the subject of the following corollary.  
 
\begin{table}
\begin{center}
$
\begin{array}{|cc|c|c|c|c|}
\hline \SLO & \text{contains}           &\Z/2n & \Q    &\Di    &\Te\\
\hline \PSLO &  \text{contains}   & \Z/n & \Dtwo &\Dthree& \Af \\
\hline 
\end{array}
$
\end{center} 
  
\caption{  The finite subgroups of $\SLO$ and their quotients in $\PSLO$ by the order-2-group $\{1, -1\}$.   Here, $\Z/n$ is the cyclic group of order $n$,  $\Q$   the quaternion group of order $8$,  $\Di$ the dicyclic group of order $12$, $\Te$ the binary tetrahedral group of order $24$.   Further, the dihedral group are $\Dtwo$ with four elements and $\Dthree$ with six elements,  and the tetrahedral group is isomorphic to the alternating group $\Af$ on four letters.} \label{table:covering}
\end{table}

\begin{corollary}[to theorems of Kr\"amer] \label{obs:4cases}
The shapes of the types of connected components of non-central reduced $2$--torsion subcomplex quotients, 
with stabilizers for the $\SLtwo$ case, 
are all shown in the last column of Table~\ref{table:subcomplexes}.
\end{corollary}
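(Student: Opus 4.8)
The plan is to move the entire question into the $\PSLtwo$ setting and then read off the admissible shapes from Krämer's classification of finite subgroups. First I would invoke the correspondence recorded just after Definition~\ref{non-central torsion subcomplex}: since the kernel of the action of $\SLO$ on $X$ is exactly the center $\{\pm 1\}$, the non-central $2$--torsion subcomplex for $\SLO$ agrees, as a $\PSLO$--cell complex, with the $2$--torsion subcomplex for $\PSLO$. By Theorem~\ref{pivotal} the reduction steps---merging orbit-wise the pairs satisfying Conditions~$\cellCondition$ and~$\weakerCondition$, and cutting off terminal edges---preserve the relevant equivariant Farrell cohomology, so it is enough to enumerate the connected components of the \emph{reduced} $2$--torsion subcomplex quotient for $\PSLO$ and then transport their stabilizers along the central extension by means of Table~\ref{table:covering}.

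Next I would fix the local building blocks. Since an element of finite order in a Bianchi group has order $1,2,3,4$ or $6$ and, in the refined complex, cells of dimension at least $2$ are trivially stabilized, the reduced $2$--torsion subcomplex quotient is a finite graph. Its edges carry cyclic stabilizers containing $2$--torsion, which in $\PSLO$ forces the edge stabilizer to be $\Z/2$; its vertices carry stabilizers among the finite subgroups of $\PSLO$ that contain $2$--torsion, namely $\Z/2$, $\Dtwo$, $\Dthree$ and $\Af$ (Table~\ref{table:covering}). I would then test Condition~$\weakerCondition$ for each incidence of a $\Z/2$--edge at a vertex: the inclusion $\Z/2 \hookrightarrow G$ induces an isomorphism on mod~$2$ cohomology exactly when that $\Z/2$ is a Sylow $2$--subgroup of $G$, which holds for $G \cong \Z/2$ and $G \cong \Dthree$ but fails for $G \cong \Dtwo$ and $G \cong \Af$. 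Consequently the valence-$2$ vertices stabilized by $\Z/2$ or $\Dthree$ are merged away and terminal such edges are cut off, whereas the $\Dtwo$-- and $\Af$--vertices survive and bound the retained components. This pins down the admissible local valences and stabilizer types.

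Finally I would invoke the two theorems of Krämer from~\cite{Kraemer}, which count the conjugacy classes of the non-cyclic finite subgroups of the Bianchi groups and control their incidences, to assemble these local pieces into global components. Combined with the reduction rules of the previous step, the only graphs that can arise are the four shapes displayed in the last column of Table~\ref{table:subcomplexes}, whose multiplicities we denote $o$, $\iota$, $\theta$ and $\rho$; lifting the $\PSLO$--stabilizers $\Z/2$, $\Dtwo$, $\Dthree$, $\Af$ to their preimages $\Z/4$, $\Q$, $\Di$ and $\Te$ under $\SLO \to \PSLO$ (Table~\ref{table:covering}) records the $\SLtwo$ stabilizers asserted in the corollary.

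The step I expect to be the main obstacle is the completeness claim, namely that \emph{no} further shapes occur. The local analysis constrains each vertex and edge individually, but excluding larger or more highly branched components requires the global input of Krämer's two theorems on the numbers and mutual incidences of the non-cyclic subgroups. Translating those arithmetic counts into the topological statement that every connected component is isomorphic to one of the four prescribed graphs, and verifying that the reduction procedure terminates at each of them rather than collapsing one type into another, is the delicate part; the remaining identification of stabilizers is routine bookkeeping with Table~\ref{table:covering}.
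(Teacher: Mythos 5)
Your high-level framework coincides with the paper's: pass to $\PSLO$ via the correspondence noted after Definition~\ref{non-central torsion subcomplex}, invoke Kr\"amer, and lift stabilizers through Table~\ref{table:covering}. Your local analysis is also sound (edge stabilizers $\Z/2$; merging of valence-two vertices with stabilizer $\Z/2$ or $\Dthree$ under Condition~$\weakerCondition$; survival of the $\Dtwo$-- and $\Af$--vertices). However, the core of the paper's proof is exactly the step you defer to ``invoke the two theorems of Kr\"amer,'' and your description of those theorems --- as counts of conjugacy classes of non-cyclic subgroups together with unspecified ``incidence'' control --- is not the input the paper uses, nor would counts alone suffice. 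What Kr\"amer's Satz 8.3 and 8.4 actually provide is structural: for every $2$--dihedral subgroup $G$ of $\PSLO$ there is, up to conjugacy, \emph{precisely one} $2$--dihedral subgroup $H$ not conjugate to $G$ with $G \cap H$ of order $2$, together with a trichotomy according to whether ($\theta$) both $G$ and $H$ are maximal finite, ($\rho$) $G$ is maximal but $H$ lies in a copy of $\Af$, or ($\iota$) both lie in copies of $\Af$; and in each case Kr\"amer determines the conjugacy relations among the three order-$2$ subgroups $C$, $C'$, $C''$ of $G$ and which dihedral subgroups contain them.

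This structural data is what pins down the shapes. Since bifurcation points of the reduced quotient correspond to conjugacy classes of $\Dtwo$, endpoints to conjugacy classes of $\Af$, and edges to conjugacy classes of the order-$2$ subgroups they contain, the uniqueness of the partner $H$ and the conjugacy pattern of $C$, $C'$, $C''$ dictate precisely how the three edges at a bifurcation point close up: into the shape $\graphFive$ in case ($\theta$), the shape $\graphTwo$ in case ($\rho$), and the shape $\edgegraph$ in case ($\iota$). In particular this is what excludes components with three or more bifurcation points --- your acknowledged ``main obstacle'' --- because each class of $\Dtwo$ has only the one non-conjugate partner, and in case ($\theta$) the partners $H$, $H'$, $H''$ attached to the three edges are all conjugate. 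Components of type $\circlegraph$ arise separately, from conjugacy classes of $\Z/2$ contained in no dihedral subgroup, a fact the paper quotes from~\cite{RahmTorsion}; your assembly step would need to treat this case as well. So as written your proposal is a correct skeleton whose decisive argument is missing: it does not establish completeness of the list of four shapes, and it misidentifies the nature of the theorems needed to do so.
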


\begin{proof}
For this proof, we consider Bianchi groups as being $\PSLO$, unless explicitly stated otherwise.
Kr\"amer [\cite{Kraemer}, Satz 8.3 and Satz 8.4] has shown that the finite subgroups of $\PSLO$, particularly the $2$-dihedral subgroups $\Dtwo$,
are related to each other in one of only a few ways.  For each $2$--dihedral subgroup $G$, there is, up to conjugacy,
precisely one $2$--dihedral subgroup $H,$ such that the intersection of $G$ and $H$ is a group $C$ of order 2
and such that $H$ is not conjugate to $G$.   Let $C'$, respectively $C''$, be the two other subgroups of $G$ of order 2. 
There are three possible cases.  
\begin{itemize}
 \item[($\theta$).] The groups $G$ and $H$ are maximal finite subgroups of the Bianchi group.  In particular, neither $G$ nor $H$ are contained in an $\Af$ subgroup.
Then, the groups $C$, $C'$ and $C''$ are pairwise non-conjugate to each other;
up to conjugacy, there is precisely one $2$--dihedral subgroup $H'$ which contains $C'$
and is not conjugate to $G$;
and up to conjugacy, there is precisely one $2$--dihedral subgroup $H''$ which contains $C''$
and is not conjugate to $G$. Furthermore, the groups $H$, $H'$, and $H''$ are conjugate to each other.
\item[($\rho$).] The group $G$ is a maximal finite subgroup of the Bianchi group, but $H$ is contained in a copy of $\Af$. 
Then the groups $C'$ and $C''$ are conjugate to each other, but not conjugate to $C$.  In addition, 
up to conjugacy, $G$ is the only $2$--dihedral subgroup which contains $C'$ and the only $2$--dihedral subgroup which contains $C''$.
\item[($\iota$).] The groups $G$ and $H$ are both contained in copies of $\Af$ in the Bianchi group.  Then the groups $C$, $C'$ and $C''$ are conjugate to each other.
\end{itemize}

Using the description of a reduced $2$-torsion subcomplex for the action of $\PSLO$ 
on hyperbolic space given in~\cite{AccessingFarrell} 
in terms of conjugacy classes of finite subgroups of $\PSLO$,
the three cases yield the following types of connected components in the quotient 
of a reduced $2$-torsion subcomplex.
\begin{itemize}
 \item[($\theta$).] The conjugacy class of $G$ corresponds to one bifurcation point in the two torsion subcomplex,
 and the conjugacy class of $H$, $H'$, and $H''$ yields another.  These two bifurcation points are connected by the three edges coming from $C$, $C'$ and $C''$.
This closes the connected component and gives it the shape $\graphFive$.
\item[($\rho$).] The group $H$ corresponds to an endpoint connected by one edge (coming from $C$) to the bifurcation point corresponding to ~$G$.
The conjugacy class containing $C'$ and $C''$ yields a second edge connecting the latter bifurcation point to itself.
This closes the connected component and gives it the shape $\graphTwo$.
\item[($\iota$).] The conjugacy class containing $C$, $C'$ and $C''$ corresponds to a single edge, 
connecting its two end-points coming from $G$ and $H$.
This closes the connected component and gives it the shape $\edgegraph$.
\end{itemize}
As was observed in \cite{RahmTorsion},  conjugacy classes of cyclic subgroups which are not contained in any dihedral subgroup of $\PSLO$ yield a connected component of shape $\circlegraph$.

Passing to the preimages of the projection from $\SLO$ to $\PSLO$ as indicated in Table~\ref{table:covering}, 
we obtain the last column of Table~\ref{table:subcomplexes}, with stabilizers as claimed.
\end{proof}

All four shapes subject to the above corollary occur at small discriminant absolute values 
(see Appendix \ref{Numerical results} and a few more examples in Figure 3 of~\cite{RahmTorsion}). 
Joint work in progress of Grant Lakeland with the authors indicates the existence of one more component type for congruence subgroups in the Bianchi groups.

A connected component of the $2$--torsion subcomplex can be considered as a tree with 
action of the subgroup $G$ of the Bianchi group that sends the tree to itself.
We obtain $G$ as the \emph{groupe fondamental du graphe de groupes} (\cite{Serre-arbres}, 
not to be confused with the fundamental group of the underlying graph) 
of a connected component of the quotient of a reduced non-central $2$--torsion subcomplex
with attached stabilizer groups and monomorphisms from the edge stabilizers into the vertex stabilizers.
Such $G$ in this paper can be built iteratively using amalgamated products and HNN extensions.
We collect the four possible connected component types in Table~\ref{table:subcomplexes}.   
We note that the degree of each vertex in the quotient space is the same as the number of distinct conjugacy classes of $\Z/4$ in the vertex stabilizer.  
This information determines the HNN extensions up to ordering of the conjugacy classes.  
We also observe that all stabilizers which contain a copy of $\Q$ are associated to vertices.  
This observation will be used in Section \ref{Section:E2 page}.

\begin{table}
 \begin{center} 
 \begin{tabular}{|c|c|c|}
 \hline & & \\
${}$  & Type of group $G$&  Quotient of tree acted on by $G$ 
\\  \hline & & \\
($o$) & HNN extension $\Z/4 \ast_{\Z/4}$ & $\circlegraph \thinspace \Z/4$ \\
 & & \\
($\iota$) & Amalgamated product $\Te \ast_{\Z/4} \Te$ & $\Te \edgegraph \Te$ \\
 & & \\
($\theta$) & Double HNN extension $\left( \left( \Q \ast_{\Z/4} \Q \right) \ast_{\Z/4} \right) \ast_{\Z/4}$ & $\Q \graphFive \thinspace \Q$ \\
 & & \\
($\rho$) & Iterated construction $(\Q \ast_{\Z/4}) \ast_{\Z/4} \Te$ & $\Q \graphTwo \Te$ \\
\hline
\end{tabular}
\end{center}

\medskip

\caption{Connected component types of non-central reduced $2$--torsion subcomplex quotients.
The homotopy type of the quotient space is given in the right-most column.  
All  edges have stabilizer type $\Z/4$, and the stabilizer types of the vertices are given.} \label{table:subcomplexes}
\end{table}

\section{Maps induced by finite subgroups in the Bianchi groups}\label{sec:pieces}

In this section, we classify the possible cell stabilizers of the 
$\SLO$-action and determine the restriction maps between subgroups.  
We will use this information to determine the cohomology of components of 
reduced non-central $2$-torsion subcomplexes in Section \ref{Calculation of the non-polynomial part of the cohomology}.
 
Since the action of $\SLO$  on $X$ is properly discontinuous, cell stabilizers are finite subgroups of $\SLO$.
The enumeration of the finite subgroups of $\SLtwo(\mathbb{C})$ is a classical result,
 and the list of finite subgroups which appear in the Bianchi groups $\PSLtwo(\ringOm)$
 is also well known~\cite{binaereFormenMathAnn9}, see Table~\ref{table:covering}, 
 in which we fix our notations.  

We start by recalling some mod $2$ cohomology rings.  In what follows, the symbol $\ef$ represents the field of two elements,
and all the cohomology rings and groups in which we omit the coefficients 
are meant to be taken with (obviously trivial) $\ef$--coefficients.
We write reduced cohomology classes [in square brackets] and nilpotent cohomology classes (in parentheses).
The index of a class specifies its degree.

\begin{proposition} \cite{AdemMilgram} \label{thm:startcoh}
The cohomology rings for finite subgroups of $\PSLO$ are given below, where a subscript denotes the degree of the generator.
\begin{alignat}{3}
  &  \Cohomol^*(\Z/2)  & \cong &  \Cohomol^*(\Dthree)  \cong \ef[x_1]  \nonumber \\
  &  \Cohomol^*(\Z/3)  & \cong &  \Cohomol^*(1)  \cong  \ef  \nonumber\\
  &  \Cohomol^*(\Dtwo)  & \cong  &   \ef[x_1,y_1] \nonumber \\
  &  \Cohomol^*(\Af)  & \cong  &  \ef[u_2,v_3,w_3]\/ 
                  / \langle u_2^3 + v_3^2 + w_3^2 + v_3w_3 \rangle  \nonumber  
\end{alignat}
\qed
\end{proposition}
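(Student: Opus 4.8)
The computation splits according to the $2$--local structure of each group, and I would treat the cyclic and Klein cases first, then reduce $\Dthree$ and $\Af$ to invariant-theory problems via the Lyndon--Hochschild--Serre spectral sequence of Section~\ref{sec:ss}.

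\emph{The elementary cases.} Since $\Z/3$ and the trivial group have order prime to $2$, their higher mod~$2$ cohomology vanishes and $\Cohomol^*(\Z/3) \cong \Cohomol^*(1) \cong \ef$. The group $\Dtwo$ is the Klein four-group $\Z/2 \times \Z/2$, so the K\"unneth theorem gives $\Cohomol^*(\Dtwo) \cong \Cohomol^*(\Z/2) \otimes \Cohomol^*(\Z/2) \cong \ef[x_1,y_1]$, once we recall the classical fact $\Cohomol^*(\Z/2) \cong \ef[x_1]$.

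\emph{Reducing $\Dthree$ and $\Af$.} For $\Dthree \cong S_3$, the subgroup $\Z/3$ is normal with quotient $\Z/2$; in the Lyndon--Hochschild--Serre spectral sequence of $1 \to \Z/3 \to S_3 \to \Z/2 \to 1$ the coefficients $\Cohomol^j(\Z/3)$ vanish for $j>0$, so the sequence is concentrated on the bottom row and collapses to the edge isomorphism of rings $\Cohomol^*(\Dthree) \cong \Cohomol^*(\Z/2) \cong \ef[x_1]$. For $\Af \cong A_4$, the Sylow $2$--subgroup $\Dtwo$ is normal with quotient $\Z/3$; since $|\Z/3|$ is invertible in $\ef$, the spectral sequence of $1 \to \Dtwo \to A_4 \to \Z/3 \to 1$ again collapses onto its $0$th column, yielding
\[
\Cohomol^*(\Af) \cong \Cohomol^*(\Dtwo)^{\Z/3} \cong \ef[x_1,y_1]^{\Z/3},
\]
where $\Z/3 = A_4/\Dtwo$ acts on $\Cohomol^1(\Dtwo)$ by cyclically permuting the three nonzero classes $x_1,\, y_1,\, x_1+y_1$ (the action induced by conjugation permuting the three involutions of $\Dtwo$).

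\emph{Identifying the invariant ring.} It remains to show $\ef[x_1,y_1]^{\Z/3}$ has the stated presentation. One checks directly that $u_2 := x_1^2 + x_1 y_1 + y_1^2$, the product $v_3 := x_1^2 y_1 + x_1 y_1^2$ of the three linear forms, and $w_3 := x_1^3 + x_1^2 y_1 + y_1^3$ are fixed by the cyclic action and satisfy $u_2^3 + v_3^2 + w_3^2 + v_3 w_3 = 0$. To prove these generate and that the sextic is the only relation, I would compute the Hilbert (Molien) series; diagonalising the action over $\F_4$ and using that $3$ is invertible in $\ef$, the Molien formula gives
\[
\frac{1}{3}\left( \frac{1}{(1-t)^2} + \frac{2}{1 + t + t^2} \right) = \frac{1 - t^6}{(1-t^2)(1-t^3)^2},
\]
which is exactly the Poincar\'e series of the complete intersection $\ef[u_2,v_3,w_3]/\langle u_2^3 + v_3^2 + w_3^2 + v_3 w_3\rangle$. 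Comparing coefficients degree by degree shows the surjection from this complete intersection onto the invariant ring is a dimension-preserving isomorphism.

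\emph{Main obstacle.} The only genuinely delicate step is the $\Af$ computation: exhibiting two independent degree-$3$ invariants and certifying that the degree-$6$ relation is the sole relation. The Molien-series comparison reduces this to bookkeeping, but one still has to confirm that $u_2, v_3, w_3$ actually generate the invariant ring rather than merely a subalgebra with the same leading behaviour; I would close this either by checking the first several graded pieces against the series (the critical degree being $6$, where four weighted monomials span a $3$--dimensional space) or by a short Gr\"obner-basis argument on the defining ideal.
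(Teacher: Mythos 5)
Your proposal is correct in substance, but it takes a genuinely different route from the paper, because the paper contains no proof of this proposition at all: the statement is quoted directly from \cite{AdemMilgram}, with the end-of-proof symbol attached to the statement itself. Your self-contained derivation --- K\"unneth for $\Dtwo$, vanishing of positive-degree mod~$2$ cohomology for the odd-order groups, and collapse of the Lyndon--Hochschild--Serre spectral sequence along the coprime-order normal subgroups $\Z/3 \lhd \Dthree$ and $\Dtwo \lhd \Af$ --- is exactly parallel to the method the paper does spell out for the $\SLtwo$ stabilizers in Proposition~\ref{prop:SLfincoh} (there, too, the $\Z/3$ inside $\Di$ is killed by the coefficient field, and the rest is a spectral-sequence bookkeeping argument), so your approach buys self-containedness and stylistic consistency with the rest of the text, while the paper's citation buys brevity for what are classical computations.

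Two details in your $\Af$ endgame need tightening, though both are repairable with tools you already invoke. First, in characteristic $2$ the Molien count must be read as counting eigen-monomials $s^a t^b$ over $\F_4$ with $a + 2b \equiv 0 \bmod 3$, not as a trace of the Reynolds idempotent, since traces of idempotents in characteristic $2$ only detect ranks modulo $2$; under that reading your series identity is correct. Second, and more seriously, checking graded pieces up to degree $6$ cannot by itself establish that $u_2, v_3, w_3$ generate the invariant ring --- a missing generator could first appear in arbitrarily high degree. The clean fix is to run your two steps in the opposite logical order: first show that the kernel of $\ef[u,v,w] \rightarrow \ef[x_1,y_1]$ is exactly $\langle u^3+v^2+w^2+vw \rangle$, either by your Gr\"obner-basis computation or by noting that this sextic is irreducible while the kernel is a height-one prime (the image is a two-dimensional domain); this gives that the subalgebra generated by your three invariants has Hilbert series $(1-t^6)/\bigl((1-t^2)(1-t^3)^2\bigr)$. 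Since this equals the Molien series of the full invariant ring and the subalgebra sits inside it, the two coincide degree by degree, which yields both generation and the presentation at once. Alternatively, a degree bound such as the Noether bound in coprime characteristic (Fleischmann--Fogarty) caps the generators at degree $3$, where your classes exhaust the invariants. With either repair the argument is complete.
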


We use the  Lyndon--Hochschild--Serre spectral sequence to determine the cohomology rings of the finite subgroups of $\SLtwo(\ringOm)$. 
 In particular, for each finite subgroup $G$ of $\SLtwo(\ringOm)$,
 there is a corresponding finite subgroup $Q$ of $\PSLtwo(\ringOm)$ which fits into the central extension 
\[
1 \rightarrow \Z/2 \longrightarrow G \longrightarrow Q \rightarrow 1
\]
where $\Z/2$ is the subgroup of $\SLtwo(\ringOm)$ containing $\{ \pm I \}$.
  We note that all finite subgroups of $\SLtwo(\mathbb{C})$ also sit inside ${\rm SU}(2)$ and act freely on it.
  Since ${\rm SU}(2)$ can be identified with the $3$--sphere,
 the  cohomology rings for the finite subgroups of $\SLtwo(\ringOm)$ are periodic of period dividing $4$ \cite{Brown}.
  In particular, the cohomology rings can all be expressed as a tensor product where one term is a  
  polynomial ring on one generator.  

\begin{proposition}  \label{prop:SLfincoh}      
The following are the mod $2$ cohomology rings of the finite subgroups of $\SLtwo(\ringOm)$.
\begin{alignat}{3}
  &  \Cohomol^*(\Z/4)  & \cong &   \Cohomol^*(\Di)     \cong \ef[e_2](b_1)  \nonumber \\
  &  \Cohomol^*(\Z/2)  & \cong &   \Cohomol^*(\Z/6)  \cong  \ef[e_1]  \nonumber \\
  &  \Cohomol^*(\Q)    & \cong  &  \ef[e_4](x_1, y_1, x_2, y_2, x_3) \nonumber \\
  &  \Cohomol^*(\Te)   & \cong  &  \ef[e_4](b_3) \nonumber
\end{alignat}
\end{proposition}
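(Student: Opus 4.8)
The plan is to analyze each finite subgroup $G$ of $\SLtwo(\ringOm)$ through a group extension, running the Lyndon--Hochschild--Serre spectral sequence with $\ef$-coefficients, and to exploit that $\Cohomol^*(G)$ is periodic of period dividing~$4$ (as $G$ acts freely on $SU(2)=S^3$). The ``small'' cases are quick. First, $\Cohomol^*(\Z/2)=\ef[e_1]$ and $\Cohomol^*(\Z/4)=\ef[e_2](b_1)$ are classical. For $\Z/6\cong\Z/2\times\Z/3$ and for $\Di$ I would pass to the odd-order normal subgroup $\Z/3$: the extensions $1\to\Z/3\to\Z/6\to\Z/2\to1$ and $1\to\Z/3\to\Di\to\Z/4\to1$ have fibre with trivial mod-$2$ cohomology, so their spectral sequences collapse onto the base edge, giving $\Cohomol^*(\Z/6)\cong\Cohomol^*(\Z/2)=\ef[e_1]$ and $\Cohomol^*(\Di)\cong\Cohomol^*(\Z/4)=\ef[e_2](b_1)$. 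The remaining two groups, $\Q$ and $\Te$, require the central extensions $1\to\Z/2\to G\to Q\to1$ from before the statement; there $E_2^{i,j}\cong\Cohomol^i(Q)\otimes\ef[t_1]$ with $t_1$ the degree-one fibre class and $\Cohomol^*(Q)$ as in Proposition~\ref{thm:startcoh}, the differentials are derivations (Remark~\ref{rem:derivation}), and the whole computation rests on identifying the transgressions $d_2 t_1$ and $d_3(t_1^2)$.

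For $\Q$ I use $1\to\Z/2\to\Q\to\Dtwo\to1$ with $\Cohomol^*(\Dtwo)=\ef[x_1,y_1]$. The transgression $d_2 t_1$ is the $k$-invariant, the generator of the kernel of inflation $\Cohomol^2(\Dtwo)\to\Cohomol^2(\Q)$ (Lemma~IV.1.12 of~\cite{AdemMilgram}, recalled in Section~\ref{sec:ss}); since each of the three order-two subgroups of $\Dtwo$ has $\Z/4$ as its preimage in $\Q$, this class restricts nontrivially to all three, forcing $d_2 t_1=x_1^2+x_1 y_1+y_1^2$. A derivation computation then gives $E_3=\big(\ef[x_1,y_1]/(x_1^2+x_1 y_1+y_1^2)\big)\otimes\ef[t_1^2]$. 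Because $t_1^2=Sq^1 t_1$, the Kudo transgression theorem yields $d_3(t_1^2)=Sq^1(d_2 t_1)=x_1^2 y_1+x_1 y_1^2$, which is nonzero in $E_3$; quotienting by it gives $E_4=\big(\ef[x_1,y_1]/(x_1^2+x_1 y_1+y_1^2,\ x_1^2 y_1+x_1 y_1^2)\big)\otimes\ef[t_1^4]$. The first factor is the nilpotent algebra of total dimension $1+2+2+1$ with basis $1,x_1,y_1,x_2,y_2,x_3$, and $t_1^4=(t_1^2)^2$ is a permanent cycle. Since $E_4$ is generated as an algebra by bottom-row classes and $t_1^4$, all permanent cycles, it collapses; setting $e_4:=t_1^4$ gives $\Cohomol^*(\Q)\cong\ef[e_4](x_1,y_1,x_2,y_2,x_3)$, in agreement with the period-$4$ Poincaré series $(1+2t+2t^2+t^3)/(1-t^4)$.

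For $\Te$ the structure is identical, with $1\to\Z/2\to\Te\to\Af\to1$ and $\Cohomol^*(\Af)=\ef[u_2,v_3,w_3]/\langle u_2^3+v_3^2+w_3^2+v_3 w_3\rangle$. As $\Cohomol^1(\Af)=0$, the only candidate for the $k$-invariant is the nonzero degree-two class, so $d_2 t_1=u_2$ and $E_3=\big(\ef[v_3,w_3]/(v_3^2+w_3^2+v_3 w_3)\big)\otimes\ef[t_1^2]$. Kudo again gives $d_3(t_1^2)=Sq^1 u_2$. To see this class is nonzero I would restrict to a Sylow $2$-subgroup $\Dtwo\subset\Af$, where $u_2\mapsto x_1^2+x_1 y_1+y_1^2$ and hence $Sq^1 u_2\mapsto x_1^2 y_1+x_1 y_1^2\neq0$; restriction to a Sylow subgroup is injective on mod-$2$ cohomology, and $\Cohomol^1(\Af)=0$ forces $(u_2)\cap\Cohomol^3(\Af)=0$, so $Sq^1 u_2$ is already nonzero in $E_3$ and is a nonzerodivisor of degree~$3$. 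Quotienting it out collapses the base factor to $\Lambda(b_3)$, and the permanent-cycle argument gives $E_4=E_\infty$, hence $\Cohomol^*(\Te)\cong\ef[e_4](b_3)$ with $e_4=t_1^4$.

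Finally, in both $\Q$ and $\Te$ one must upgrade the $E_\infty$-answer to a genuine ring isomorphism. The class $e_4$ is the image of the period-four periodicity class (the second Chern class of the defining $\mathrm{SL}_2(\mathbb{C})$-representation), hence a genuine, non-nilpotent polynomial generator; and the relations among the low-degree generators hold on the nose, since they are inflated from the base: the $k$-invariant $x_1^2+x_1 y_1+y_1^2$ (respectively $u_2$) lies in the kernel of inflation, and $Sq^1$ commutes with inflation, so $x_1^2 y_1+x_1 y_1^2$ (respectively $Sq^1 u_2$) also inflates to zero. I expect the one genuinely delicate point to be pinning down the transgressions---the $k$-invariant $d_2 t_1$, and above all the value of $Sq^1$ applied to it---after which collapse at $E_4$ is forced by periodicity and everything else follows formally from the derivation property and the known ring $\Cohomol^*(Q)$.
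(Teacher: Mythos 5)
Your proposal is correct and takes essentially the same route as the paper's proof: the same extensions (the odd-order fibre $\Z/3$ for $\Di$ and $\Z/6$, and the central extensions by $\Z/2$ for $\Q$ and $\Te$), the $k$-invariant as the value of $d_2$ on the fibre class, the Kudo transgression theorem for $d_3$ of its square, and collapse forced by the resulting support of the page (columns $0$ through $3$, rows $\equiv 0 \bmod 4$). The only differences are in how sub-steps are justified: where the paper cites \cite{AdemMilgram}*{Proposition IV.2.10} and \cite{BerkoveMod2} for the $k$-invariants $x_1^2+x_1y_1+y_1^2$ and $u_2$, you derive them directly (via the $\Z/4$ preimages of the three involutions of $\Dtwo$, respectively via $\Cohomol^1(\Af)=0$ and Sylow restriction), and you make explicit the multiplicative extension step---that the relations hold on the nose because they are inflated from the base and $Sq^1$ commutes with inflation---which the paper leaves implicit.
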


\begin{proof}
The cohomology results for cyclic group results are straightforward, 
and the calculation of the other cohomology rings (and a classification for all periodic groups)
are contained in~\cite{AdemMilgram}.  
However, we briefly review the derivations here as the descriptions will be useful when we determine restriction maps to subgroups.

The dicyclic group $\Di$ is a semidirect product and fits into the short exact sequence
\[
1 \rightarrow \Z/3 \longrightarrow \Z/3 \rtimes \Z/4 \longrightarrow \Z/4 \rightarrow 1.
\]  
Since $\Cohomol^*(\Z/3)$ has trivial mod $2$ homology, in the  Lyndon--Hochschild--Serre spectral sequence the only nontrivial  cohomology 
 occurs on the horizontal axis and is isomorphic to $\Cohomol^*(\Z/4)$.

For $\Z/4$, $\Q$ and $\Te$, we again use the  Lyndon--Hochschild--Serre spectral sequence for central extensions.  In the spectral sequence of the extension,
 let $\ef[e_1]$ be the cohomology ring corresponding to the central subgroup $\Z/2$.  On the $E_2$ page of the spectral sequence, the image of $d_2(e_1)$ can be identified with the $k$--invariant.  In addition, the Kudo transgression theorem describes the images of $d_{2^n+1}\left((e_1)^{2^n}\right)$.  

We start with the calculation of $\Cohomol^*(\Z/4)$.  
Let $\Cohomol^*(\Z/2) \cong \ef[x_1]$, and consider the spectral sequence associated to the central extension. 
 In this case, the $k$--invariant $d_2(e_1) = x_1^2$, so $d_2(e_1 x_i^k) = x_i^{k+2}$ and $d_2(e_1^2) = 0$.
  As the result, the spectral sequence collapses at the $E_2$ page with classes in even rows and in columns $0$ and $1$ only. 
 The class $e_1^2$ represents the polynomial class in $\Cohomol^2(\Z/4)$,
 and the class in $E_2^{0,1}$ is the one-dimensional exterior class.

We next calculate $\Cohomol^*(\Q)$.  The extension is central, so $E_1^{p,q} \cong \Cohomol^p(\Dtwo) \otimes \Cohomol^q(\Z/2)$, and by~\cite{AdemMilgram}*{Proposition IV.2.10},
 the $k$--invariant $d_2(e_1) = x_1^2 +x_1y_1 +y_1^2 \in \Cohomol^*(\Dtwo)$.  Also, $d_2(e_1^2) = 0$, which completely determines the differentials on the $E_2$ page.  Consequently, the only classes which survive to the $E_3$ page lie in even rows.  Next, using the Kudo transgression theorem, 
\[
d_3(e_1^2)  = d_3\left(Sq^1(e_1)\right)  = Sq^1(x_1^2 + x_1 y_1 + y_1^2) = x_1^2y_1 + x_1 y_1^2.  
\]
Through careful accounting, one can show that the classes that remain in $E_3$ are in columns $0$ through~$3$,  and that the spectral sequence has non-zero terms in rows congruent to $0 \text{ mod } 4$.    Consequently, all higher differentials vanish and  $E_3 \cong E_{\infty}$.    The only classes left correspond to $1$, $x_1$, $y_1$, $x_1^2$, $y_1^2$, $x_1^2 y_1 = x_1 y_1^2$  and the product of these classes with powers of $e_1^4$, which represents a four-dimensional polynomial class.  
We find that as a ring, $\Cohomol^*(\Q) \cong  \ef[e_4](x_1, y_1) / \langle R \rangle$, where $R$ is the ideal generated by $x_1^2 +  x_1y_1 + y_1^2$ and $x_1^2 y_1 + x_1 y_1^2$.   The result follows.

Finally, we consider the calculation of $\Cohomol^*(\Te)$, which proceeds in a similar manner as the case for $\Cohomol^*(\Q)$,
 except now the quotient group is $\Af$ and we have to consider
 \[
\Cohomol^*(\Af) \cong \ef[u_2,v_3,w_3]\ / \langle u_2^3 + v_3^2 + w_3^2 + v_3w_3 \rangle.
\]
Briefly, $d_2(e_1) = u_2$, and $d_3(e_1^2) = v_3 = Sq^1(u_2)$ \cite{BerkoveMod2}.  Once again, after a careful check, the spectral sequence collapses at the $E_3$ page and the only classes left correspond to $1$, the four-dimensional class represented by $e_1^4$, $w_3 \in E_3^{3,0}$, and the product of $w_3$ with powers of $e_1^4$.
\end{proof}

Given the following commuting diagram of groups, 
\[
\begin{CD}
\Z/2 @>>>  G_1 @>>>    Q_1     \\
        @|               @VVi_2V      @VVi_1V   \\
\Z/2 @>>>  G_2 @>>>   Q_2      \\
\end{CD}
\]
if one knows the induced map $i_1^* : \Cohomol^*(Q_2) \rightarrow \Cohomol^*(Q_1)$, it is often possible to calculate the effect of $i_2^*: \Cohomol^*(G_2) \rightarrow \Cohomol^*(G_1)$.  
  The commutative diagram gives rise to two Lyndon--Hochschild--Serre spectral sequences, one for each extension, and the two are compatible via the isomorphism on the fiber.  Specifically, the maps between the spectral sequences are given by $i_1^*$ along the $x$--axis and the identity along the $y$--axis.  Once the spectral sequences converge, for the cases we consider in this work the result gives the effect of the restriction map $res^{G_2}_{G_1} = i_2^*$ for classes on the edges.

\begin{proposition}\label {prop:restriction}
The following are the nontrivial restriction maps on cohomology generators for finite subgroups of $\SLtwo(\ringOm)$:
\[
\begin{array}{ll}
res^{\Z/4}_{\Z/2} (e_2) & = e_1^2  \\
res^{\Di}_{\Z/6}(e_2) & = res^{\Di}_{\Z/2}(e_2) = e_1^2 \\
res^{\Q}_{\Z/4}(e_4) & = e_2^2  , \quad res^{\Q}_{\Z/4}(x_1)  = b_1  \\   
res^{\Q}_{\Z/2}(e_4) & = e_1^4  \\
res^{\Te}_{\Z/6}(e_4) & = res^{\Te}_{\Z/2}(e_4) = e_1^4  \\
res^{\Te}_{\Z/4}(e_4) & = e_2^2
\end{array}{}
\]
In addition, $ res^{\Z/6}_{\Z/2}$ and $res^{\Di}_{\Z/4}$ are isomorphisms.
\end{proposition}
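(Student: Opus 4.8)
The plan is to run, case by case, the comparison of Lyndon--Hochschild--Serre spectral sequences set up in the commutative diagram immediately preceding the statement, in which the comparison map is $i_1^*$ along the horizontal (base) edge and the identity along the vertical (fiber) edge. This separates the entries of the proposition into two kinds. The restrictions onto the central $\Z/2$ are the \emph{fiber} restrictions, computed as the vertical edge homomorphisms $\Cohomol^*(G)\twoheadrightarrow E_\infty^{0,*}\hookrightarrow E_2^{0,*}=\Cohomol^*(\Z/2)$; the restrictions between two $\SLtwo$-subgroups that cover an inclusion of $\PSLtwo$-subgroups are read off from the comparison map, taking the relevant $\PSLtwo$-side restriction from Proposition~\ref{thm:startcoh} as the input $i_1^*$ along the base.

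First I would treat the fiber restrictions. In each spectral sequence of Proposition~\ref{prop:SLfincoh} the top polynomial generator survives on the vertical edge: $e_2$ is represented by $e_1^2$ in $\Cohomol^*(\Z/4)$, while $e_4$ is represented by $e_1^4$ in both $\Cohomol^*(\Q)$ and $\Cohomol^*(\Te)$, the lower powers $e_1$ and $e_1^2$ being removed by $d_2$ and by the Kudo transgression. Reading off the edge homomorphism gives $res^{\Z/4}_{\Z/2}(e_2)=e_1^2$, $res^{\Q}_{\Z/2}(e_4)=e_1^4$ and $res^{\Te}_{\Z/2}(e_4)=e_1^4$ at once. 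For the cover-compatible restrictions, the inclusions $\Z/4\hookrightarrow\Q$ and $\Z/4\hookrightarrow\Te$ cover $\Z/2\hookrightarrow\Dtwo$ and $\Z/2\hookrightarrow\Af$ respectively; since the comparison map is the identity on the fiber edge carrying $e_4=e_1^4$, and $e_1^4=(e_1^2)^2$ represents $e_2^2$ in the $\Z/4$ spectral sequence, I obtain $res^{\Q}_{\Z/4}(e_4)=e_2^2$ and $res^{\Te}_{\Z/4}(e_4)=e_2^2$. The low-degree class $x_1\in\Cohomol^1(\Q)$ is an inflation class living on the horizontal edge, coming from $\Cohomol^1(\Dtwo)$, so its restriction is governed by $i_1^*=res^{\Dtwo}_{\Z/2}$; with the generators chosen so that $i_1^*(x_1)=x_1$, the horizontal-edge generator $x_1$ of the $\Z/4$ spectral sequence is exactly the inflation class $b_1$, yielding $res^{\Q}_{\Z/4}(x_1)=b_1$.

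For the subgroups with an odd-order normal complement I would argue by transitivity of restriction. As $\Cohomol^*(\Z/3)=\ef$, the K\"unneth theorem gives $\Cohomol^*(\Z/6)\cong\Cohomol^*(\Z/2)$ realised by $res^{\Z/6}_{\Z/2}$, an isomorphism; and $\Z/4\hookrightarrow\Di$ covers $\Z/2\hookrightarrow\Dthree$, for which $res^{\Dthree}_{\Z/2}$ is an isomorphism (both rings are $\ef[x_1]$), so by comparison $res^{\Di}_{\Z/4}$ is an isomorphism too. The remaining entries then follow by composing established maps along the chains $\Z/2\subset\Z/4\subset\Di$, $\Z/2\subset\Z/6\subset\Di$ and $\Z/2\subset\Z/6\subset\Te$; for instance $res^{\Di}_{\Z/2}(e_2)=res^{\Z/4}_{\Z/2}\!\left(res^{\Di}_{\Z/4}(e_2)\right)=e_1^2$, and $res^{\Di}_{\Z/6}(e_2)=e_1^2$, $res^{\Te}_{\Z/6}(e_4)=e_1^4$ after transporting through the isomorphism $res^{\Z/6}_{\Z/2}$.

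The main obstacle I anticipate is bookkeeping rather than anything structural: pinning down the precise $\PSLtwo$-side restrictions $res^{\Dtwo}_{\Z/2}$ and $res^{\Dthree}_{\Z/2}$ that serve as the input $i_1^*$, and matching them to a fixed conjugacy class of $\Z/4$ inside $\Q$. Indeed $\Dtwo$ has three $\Z/2$ subgroups, whose restrictions send $(x_1,y_1)$ to $(x_1,0)$, $(0,y_1)$ and $(x_1,x_1)$, so the identity $res^{\Q}_{\Z/4}(x_1)=b_1$ holds only once the generators of $\Cohomol^*(\Q)$ and $\Cohomol^*(\Dtwo)$ are named compatibly with the chosen $\Z/4$; I would therefore fix these conventions explicitly at the outset. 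Checking that the named exterior generator $x_1$ genuinely survives to $E_\infty^{1,0}$ and is not truncated by a differential is the one spot where the degree-one part of the $\Q$-computation in Proposition~\ref{prop:SLfincoh} must be revisited with care.
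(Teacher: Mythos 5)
Your proposal is correct, and it is in fact a fuller argument than the paper gives: the paper states Proposition~\ref{prop:restriction} without a proof environment, remarking only that the results are well-known (citing Lemma 2.11 and Corollary 6.7 of \cite{AdemMilgram}) and noting that the three injections $\Z/4\hookrightarrow\Q$ correspond bijectively to the three surjections $H^1(\Q)\cong(\Z/2)^2\twoheadrightarrow\Z/2\cong H^1(\Z/4)$. What you have done is execute, case by case, exactly the machinery the paper sets up in the commuting-diagram paragraph immediately preceding the statement (comparison map $=i_1^*$ on the base, identity on the fiber), combined with the explicit differentials recorded in the proof of Proposition~\ref{prop:SLfincoh}; so your route is the one the paper intends but delegates to the literature. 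Your treatment of the fiber restrictions as vertical edge homomorphisms, the K\"unneth argument for $\Z/6$, the Sylow/comparison argument for $res^{\Di}_{\Z/4}$, and the transitivity chains are all sound. Two small points worth making explicit if this were written out: for $res^{\Q}_{\Z/4}(e_4)=e_2^2$ and $res^{\Te}_{\Z/4}(e_4)=e_2^2$, the edge-homomorphism comparison only pins down the image of the restriction in $E_\infty^{0,4}$, so one should add that $\Cohomol^4(\Z/4)$ is one-dimensional, spanned by $e_2^2$ (or equivalently invoke transitivity $res^{\Z/4}_{\Z/2}\circ res^{\Q}_{\Z/4}=res^{\Q}_{\Z/2}$ together with injectivity of $res^{\Z/4}_{\Z/2}$ on the polynomial part), to conclude equality in $\Cohomol^4(\Z/4)$ itself; and your worry about $x_1$ surviving is easily discharged since $E_r^{1,0}$ can neither support nor receive a differential for $r\geq 2$, so $E_\infty^{1,0}=E_2^{1,0}=\Cohomol^1(\Dtwo)$ and the degree-one inflation identification $\Cohomol^1(\Q)\cong\Cohomol^1(\Dtwo)$, $\Cohomol^1(\Z/4)\cong\Cohomol^1(\Z/2)$ is automatic. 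Your bookkeeping caveat about matching generators to a fixed conjugacy class of $\Z/4$ in $\Q$ is precisely the content of the paper's closing remark after the proposition.
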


The results in this proposition are well-known, for example, see Lemma 2.11 and Corollary~6.7 in~\cite{AdemMilgram}.  
We also note that there are three copies of $\Z/4$ in $\Q$.  
The three corresponding injections are in bijective correspondence with the three surjections from 
$(\Z/2)^2 \cong H^1(\Q)$ to $\Z/2 \cong H^1(\Z/4)$.


\begin{rem} \label{rem:separatepolyext}
We note that the restriction map is natural with respect to cup products, so the results of Proposition~\ref {prop:restriction} can be used to determine the effect of the restriction map on other cohomology classes. 
 In particular, the only nontrivial restriction map on classes with nilpotent components is
 $ res^{\Q}_{\Z/4}(e_4^i x_1)  = e_2^{2i} b_1$.
 In addition, in the cohomology of the groups we consider,
 only polynomial classes restrict non-trivially to polynomial classes.
\end{rem}


We noted in the discussion after Proposition~\ref{thm:startcoh} that the cohomology of the finite subgroups of $\SLtwo(\ringOm)$
 are all periodic of period dividing $4$. 
 Above the virtual cohomological dimension, which is 2 for the Bianchi groups, the same period can be observed for the
 $\SLtwo$ Bianchi groups and their subgroups.  The proof of this is based on ideas of \cite{AdemSmith} and \cite{CP}.
 
\begin{proposition}\label{prop:Bianchiperiodicity}
Any (not necessarily finite) subgroup $\Gamma$ in $\SLtwo(\ringOm)$ has periodic cohomology above the virtual cohomological dimension. 
 The periodicity can be realized by cup product with a $4$--dimensional class $\alpha$.  
\end{proposition}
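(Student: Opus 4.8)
The plan is to establish periodicity by exhibiting a single 4--dimensional class $\alpha \in \Cohomol^4(\Gamma)$ whose cup-product action is an isomorphism in all sufficiently high degrees. The natural source for such a class is the center $\Z/2 = \{\pm I\}$ of $\SLtwo(\ringOm)$, which is contained in $\Gamma$ whenever $\Gamma$ is nontrivial (and in any case $-I$ lies in every subgroup we care about here). Following the philosophy of \cite{AdemSmith} and \cite{CP}, periodicity above the virtual cohomological dimension is governed by the family of finite subgroups, all of which have period dividing $4$ as noted after Proposition~\ref{thm:startcoh}. So the strategy is: produce $\alpha$ globally, check it restricts to a periodicity generator on each finite subgroup, and then invoke a detection/F--isomorphism argument to promote local periodicity to global periodicity.

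\textbf{Constructing $\alpha$.} First I would take $\alpha := \beta^*(e)$, the image under the inflation (restriction-from-quotient is not available, so I mean the map induced by $\Gamma \to \Z/2$, or equivalently the class pulled back from $B\Z/2$) of the degree-$4$ polynomial generator. Concretely, let $e_1 \in \Cohomol^1(\Z/2)$ be the generator and set $\alpha$ to be the inflation of $e_1^4$ along the quotient map collapsing $\Gamma$ onto $\Z/2$; equivalently, as the referee observed, $\alpha$ is the second Chern class $c_2$ of the natural $2$--dimensional representation $\Gamma \hookrightarrow \SLtwo(\mathbb{C}) \to \mathrm{SU}(2)$. Its key property is that for every finite subgroup $H \leq \Gamma$, the restriction $res^{\Gamma}_{H}(\alpha)$ equals the image of $e_1^4$ in $\Cohomol^4(H)$, which by Proposition~\ref{prop:restriction} (the $res^{\Q}_{\Z/2}$, $res^{\Te}_{\Z/2}$, $res^{\Z/4}_{\Z/2}$ entries, each landing on $e_1^4$ or a positive power of the polynomial generator) is exactly a period-$4$ generator of the periodic ring $\Cohomol^*(H)$.

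\textbf{Promoting local periodicity to global.} The heart of the argument is that cup product with $\alpha$,
\[
\cdot\,\alpha : \Cohomol^{q}(\Gamma) \longrightarrow \Cohomol^{q+4}(\Gamma),
\]
is an isomorphism for $q$ above the virtual cohomological dimension. Above the vcd, the cohomology of $\Gamma$ is entirely torsion and, by Brown's Proposition~\ref{Brownian} specialized to the equivariant setting, is computed (in its $2$--primary part) by the $\Gamma$--equivariant Farrell cohomology of the $2$--torsion subcomplex, whose cell stabilizers are the finite subgroups of Table~\ref{table:covering}. On each such stabilizer $\alpha$ acts invertibly by the previous paragraph, so $\cdot\,\alpha$ is an isomorphism on each $E_1$--term $\Cohomol^j(\Gamma_\sigma)$ of the equivariant spectral sequence above the vcd; since multiplication by $\alpha$ commutes with the differentials (Property~\ref{ss1}, compatibility of the product with cup product), it induces an isomorphism on every later page and hence on the abutment in the relevant range. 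The same reasoning applies at the prime $3$, where the $3$--torsion stabilizers ($\Z/3$, $\Z/6$, and the relevant dicyclic pieces) likewise have period dividing $4$; and rationally there is nothing above the vcd.

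\textbf{The main obstacle} will be making the ``isomorphism on each page of the equivariant spectral sequence implies isomorphism on the abutment'' step fully rigorous above the vcd, rather than merely degreewise on $E_1$. The subtlety is that a class can be hit by a differential from below the vcd, so one must be careful that the periodicity isomorphism is compatible with the filtration in a range where the spectral sequence has genuinely stabilized --- this is precisely where the finite-dimensionality of $X$ (the $2$--dimensional retract, so the equivariant spectral sequence has only three nonzero columns) does the essential work, forcing convergence in a fixed finite range. I would therefore phrase the conclusion using Farrell--Tate cohomology $\Farrell^*(\Gamma)$, where the periodicity statement is clean and where Proposition~\ref{Brownian} directly identifies $\Farrell^*(\Gamma)$ with the equivariant Farrell cohomology of the torsion subcomplex; cup product with $\alpha$ is then visibly an isomorphism on $\Farrell^*$, and ordinary cohomology agrees with Farrell--Tate cohomology above the vcd, yielding the stated result.
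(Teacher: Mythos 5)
Your proposal is correct in substance, but it follows a genuinely different route from the paper's proof, which is geometric and very short: since $\Gamma$ acts properly on hyperbolic $3$-space (homeomorphic to $\mathbb{R}^3$) and every finite subgroup of $\SLtwo(\mathbb{C})$ acts freely on ${\rm SU}(2) \cong \sphere^3$, the group $\Gamma$ acts freely and properly on $Y = \mathbb{R}^3 \times \sphere^3$; the Borel fibration $Y \rightarrow E\Gamma \times_\Gamma Y \rightarrow B\Gamma$ is then an $\F_2$-orientable $\sphere^3$-fibration whose total space is homotopy equivalent to the finite-dimensional space $Y/\Gamma$, and the Gysin sequence forces cup product with $\alpha$ (defined there as the transgression $d_4$ of the generator of $\Cohomol^3(\sphere^3)$, i.e.\ the Euler class of the fibration) to be an isomorphism in the relevant range. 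You instead construct $\alpha$ directly as the pullback of the second Chern class, verify that it restricts to a periodicity generator on every finite subgroup, and propagate this local statement through the equivariant (Farrell) spectral sequence, using its multiplicative structure over $\Cohomol^*(\Gamma)$ and the finiteness of the filtration coming from $\dim X = 2$. Both arguments hinge on the same key fact --- finite subgroups of ${\rm SU}(2)$ act freely on $\sphere^3$ --- but the paper globalizes it geometrically, via one free action of all of $\Gamma$, which makes the convergence issue you worry about evaporate: it is absorbed into the vanishing of $\Cohomol^q(Y/\Gamma)$ in high degrees. Your route costs more bookkeeping (you must justify that the equivariant spectral sequence is a spectral sequence of $\Cohomol^*(\Gamma)$-modules and then run the finite-filtration argument, or else pass to Farrell--Tate cohomology as you suggest), but it buys explicit control of how $\alpha$ restricts to each cell stabilizer, which is information the paper needs anyway in Sections~\ref{Section:E2 page}--\ref{The second page differential on the components of the torsion subcomplex}; and the two constructions produce the same class, since the Euler class of the paper's fibration is precisely the second Chern class of the natural representation, as the referee observed.

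Two statements in your writeup need repair, though neither is fatal. First, there is no ``quotient map collapsing $\Gamma$ onto $\Z/2$'': the center $\{\pm I\}$ is a subgroup, not a quotient, and inflation from it is undefined; only your second definition of $\alpha$ (the pullback of $c_2$ along $\Gamma \hookrightarrow \SLtwo(\mathbb{C})$) makes sense, so the inflation description should be deleted rather than presented as an equivalent construction. Second, it is false that $-I$ lies in every nontrivial subgroup of $\SLtwo(\ringOm)$ --- torsion-free subgroups, for instance congruence subgroups of sufficiently high level, avoid it --- but your argument never actually uses this: the Chern-class construction of $\alpha$ requires no central element, and for torsion-free $\Gamma$ the cohomology above the virtual cohomological dimension vanishes, so the periodicity assertion is vacuous there.
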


\begin{proof}
The Bianchi groups act properly on $X$, which is homeomorphic to $\mathbb{R}^3$.  In addition, from the discussion after Proposition~\ref{thm:startcoh}, 
 we know that the finite subgroups in the Bianchi groups act freely on  ${\rm SU}(2) \subset \SLtwo(\mathbb{C})$.   Therefore the groups $\SLtwo(\ringOm)$ and their subgroups act freely and properly on $\mathbb{R}^3 \times \sphere^3$. 
 
 We essentially follow the argument in \cite{CP} and set $Y := \mathbb{R}^3 \times \sphere^3$  with the $\Gamma$--action described above.  There is a fibration
\[
  Y \rightarrow E\Gamma \times_{\Gamma} Y \rightarrow B\Gamma,
\]
where $B\Gamma$ is the classifying space for~$\Gamma$ and $E\Gamma$ is its universal cover.  
Since $Y$ has the homotopy type of $\sphere^3$, this is a spherical fibration which
is orientable as we are working with $\F_2$--coefficients.  The result follows by applying the Gysin exact sequence.

The class $\alpha$ is the image under $d_4$ of the generator of $\Cohomol^3 (\sphere^3)$.  
As mentioned in the introduction, the referee has pointed out that this class is the second Chern class of the natural representation of $\mathrm{SL}_2(\mathbb{C})$.    
\end{proof}

\section{Calculation of the $E_2$ page} \label{Section:E2 page}

In this section, we describe the $E_2$ page of the equivariant spectral sequence for Bianchi groups in a fairly general way.  Some of our analyses will hinge on distinguishing types of cohomology classes.  Given a finite group $G$, we note that the nilpotent classes in $\Cohomol^*(G)$ form an ideal.
We will  denote this ideal by $\Home{*}(G)$.  
We next define the reduced  quotient module $\Homp{*}(G)$ 
as the quotient by the ideal of nilpotent classes, so
\[
 0 \to  \Home{*}(G) \to \Cohomol^*(G) \to \Homp{*}(G) \to 0
 \]
is exact.  In what follows, we will also use the term ``reduced class'' to mean a class in the cohomology ring which has non-trivial image in the quotient module.

Denote by $X$ the cell complex described in Section~\ref{The non-central torsion subcomplex} before Corollary~\ref{obs:4cases}, and 
by $X_s$ the non-central $2$--torsion subcomplex of $X$ with respect to $\Gamma$.
Further, denote by $\xsp$ the subcomplex of $X_s$ consisting of cells whose stabilizer group contains a copy of $\Q$,
i.e. the stabilizer being of type either $\Q$ or $\Te$. 
We note that if a cell of $X$ is not in $\xsp$, then by Proposition~\ref{prop:SLfincoh}     
the cohomology of that cell's stabilizer is isomorphic to $\Cohomol^*(\Z/4)$ 
if the cell is in $X_s$,
respectively to $\Cohomol^*(\Z/2)$ if the cell is not in $X_s$.  
We also note that $\xsp$ is a $0$-dimensional subcomplex of $X$. 

\begin{note} \label{hypo} 
In the calculations that follow, we require that for any cell $\tau \in X_s$, the inclusion of the 
center  $\Center \hookrightarrow \Gamma_{\tau}$
 into the cell stabilizer of $\tau$ induces a monomorphism
on the reduced parts of the cohomology rings, \mbox{$\Homp{*}(\Gamma_{\tau}) \hookrightarrow \Homp{*}(\Center)$.}
Furthermore, for any cell $\sigma \in X$ not in $X_s$, we want that the inclusion $\Center \hookrightarrow \Gamma_{\sigma}$
induces an isomorphism \mbox{$\Homp{*}(\Gamma_{\sigma}) \to \Homp{*}(\Center)$.}
By Proposition~\ref {prop:restriction}, the action of the Bianchi groups on the cell complex $X$ obtained from hyperbolic space satisfies these conditions.
\end{note}

In the following equivariant spectral sequence material, we will assume that we are working with $\Gamma$-equivariant cohomology, 
i.e., $E_2^{p,q}(Y)$  stands for the $E_2^{p,q}$-term of the equivariant spectral sequence associated to the action of $\Gamma$ on $Y$,
unless specified otherwise.
The next theorem will be stated in terms from a relative spectral sequence, 
which is defined as follows.  
Given a cellular subcomplex $X^\prime \subseteq X$,
there is a short exact sequence of co-chain complexes
 \[
 0 \rightarrow C^*(X, X^\prime) \rightarrow C^*(X) \rightarrow C^*(X^\prime) \rightarrow 0.
 \]
Let $F$ be a free resolution for $\Gamma$. 
Applying ${\rm Hom}_{\ef[\Gamma]} (F,-)$ 
and then cohomology yields a short exact sequence of chain complexes,
\begin{equation} \label{relativeSpectralSequence}
0 \rightarrow E_1^{*,q}(X, X^\prime) \rightarrow  E_1^{*,q}(X) \rightarrow  E_1^{*,q}(X^\prime) \rightarrow 0.
\end{equation}

\begin{thm}\label{thm:E2page}
In the equivariant spectral sequence with $\F_2$-coefficients converging to 
$\Cohomol^{p+q}_\Gamma(X)$, \\
the $E_2^{p,q}$ page is given by the following rows, 
$k$ running through ${\mathbb{N}} \cup \{0\}$:
\[
\begin{array}{l  l}
  E_2^{p,4k+3}(X) \cong E_2^{p,3}(X_s)        \oplus  E_2^{p,3}(X, X_s),   \\
 E_2^{0,4k+2}(X) \cong \Cohomol^2_\Gamma(\xsp)  \oplus  E_2^{0,2}(X, X_s^\prime), &  E_2^{p,4k+2}(X) \cong E_2^{p,2}(X, X_s^\prime) \text{ for } p \geq 1,\\
 E_2^{p,4k+1}(X) \cong E_2^{p,1}(X_s)        \oplus  E_2^{p,1}(X, X_s),  \\
E_2^{p,4k}(X) \cong \Cohomol^p(_\Gamma \backslash X).   \\
\end{array}
\]
\end{thm}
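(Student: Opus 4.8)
\section*{Proof proposal}

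The plan is to analyze the $E_1$-page one row at a time, for fixed vertical degree $q$, exploiting that the cohomology rings of all cell stabilizers (Proposition~\ref{prop:SLfincoh}) are periodic of period $4$, so that the decomposition of $\Cohomol^q(\Gamma_\sigma)$ into its reduced quotient $\Homp{q}(\Gamma_\sigma)$ and its nilpotent ideal $\Home{q}(\Gamma_\sigma)$ depends only on $q \bmod 4$. Concretely, in degree $q\equiv 0$ every stabilizer contributes a one-dimensional reduced class; in the odd degrees $q\equiv 1,3$ the stabilizers $\Z/4,\Di,\Q,\Te$ (exactly the cells of $X_s$) contribute only nilpotent classes, while $\Z/2,\Z/6$ (the cells off $X_s$) contribute a reduced $\ef$; and in degree $q\equiv 2$ the stabilizers contributing nilpotent classes are only $\Q$ and $\Te$, that is, the cells of $\xsp$, whereas $\Z/4$ and $\Di$ now join $\Z/2,\Z/6$ in contributing reduced classes. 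This matching of cell type to class type in each residue class is what forces the particular subcomplexes appearing in the statement. First I would dispose of the periodicity: the compatible period generators $e_4$, $e_2^2$, $e_1^4$ (compatible since $res^{\Q}_{\Z/4}(e_4)=e_2^2$ and $res^{\Z/4}_{\Z/2}(e_2^2)=e_1^4$ by Proposition~\ref{prop:restriction}) assemble into a $d_1$-cocycle in $E_1^{0,4}$, and cup product with the resulting class (property~\ref{ss1}) induces isomorphisms $E_2^{p,q}\cong E_2^{p,q+4}$ on $X$ and on each of the auxiliary complexes $X_s$, $\xsp$, $(X,X_s)$, $(X,\xsp)$. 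This reduces everything to $q\in\{0,1,2,3\}$.

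For the row $q\equiv 0 \bmod 4$, I would observe that $\Cohomol^q(\Gamma_\sigma)\cong \ef$ for every cell $\sigma$, spanned by its reduced class. Whenever $\sigma$ is a face of $\tau$ we have $\Center\subseteq\Gamma_\tau\subseteq\Gamma_\sigma$, and $res^{\Gamma_\sigma}_{\Center}=res^{\Gamma_\tau}_{\Center}\circ res^{\Gamma_\sigma}_{\Gamma_\tau}$ is an isomorphism $\ef\to\ef$ in this degree by Note~\ref{hypo}; hence $res^{\Gamma_\sigma}_{\Gamma_\tau}$ is injective, so an isomorphism. Since $d_1$ is the difference of restrictions (property~\ref{ss3}), the $q$-th row of $E_1$ is then literally the cellular cochain complex $C^\ast({}_\Gamma\backslash X;\ef)$, whence $E_2^{p,4k}(X)\cong \Cohomol^p({}_\Gamma\backslash X)$.

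For the remaining rows the core idea is that the short exact sequence~(\ref{relativeSpectralSequence}) of $d_1$-complexes splits as a direct sum in these degrees. Write $C^\ast(X)=C^\ast(X,X_s)\oplus B$ as $\ef$-modules, where $B$ is the cochains supported on $X_s$-cells (so $B\cong E_1^{\ast,q}(X_s)$). Now $C^\ast(X,X_s)$ is always a subcomplex, and the splitting amounts to $B$ being a subcomplex as well, i.e. to the vanishing of the block of $d_1$ from $B$ into $C^\ast(X,X_s)$; this block consists of the restrictions $res^{\Gamma_\sigma}_{\Gamma_\tau}$ with $\sigma\in X_s$ and $\tau\notin X_s$. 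Such a $\tau$ has stabilizer $\Z/2$ or $\Z/6$, and in the odd degrees $\Cohomol^q(\Gamma_\sigma)$ is nilpotent, so by Remark~\ref{rem:separatepolyext} the block vanishes (the only nonvanishing nilpotent restriction, $res^{\Q}_{\Z/4}(e_4^i x_1)=e_2^{2i}b_1$, sits in degree $\equiv 1$ and targets a $\Z/4$-cell, which lies \emph{inside} $X_s$). Hence $d_1$ is block diagonal and $E_2^{p,q}(X)\cong E_2^{p,q}(X_s)\oplus E_2^{p,q}(X,X_s)$ for $q\equiv 1,3$. For $q\equiv 2$ I would run the identical argument with $\xsp$ in place of $X_s$: the nilpotent cells are now exactly those of $\xsp$, the off-diagonal restrictions out of a $\Q$- or $\Te$-vertex into its adjacent cells are nilpotent classes in an \emph{even} degree and so vanish by Remark~\ref{rem:separatepolyext}, and since $\xsp$ is $0$-dimensional its relative sequence contributes $E_2^{p,2}(\xsp)=0$ for $p\geq 1$ and $E_2^{0,2}(\xsp)=\Cohomol^2_\Gamma(\xsp)$, yielding the two displayed cases. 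Periodicity then transports each summand from $q\in\{1,2,3\}$ to $q=4k+r$.

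I expect the main obstacle to be conceptual rather than computational: one must check that in each residue class the cell-theoretic splitting ($X_s$ or $\xsp$) agrees with the nilpotent/reduced splitting of stabilizer cohomology, and in particular see precisely why $X_s$ is the \emph{wrong} subcomplex for $q\equiv 2$. The obstruction there is exactly the off-diagonal block: a $\Z/4$- or $\Di$-cell lies in $X_s$ but in degree $q\equiv 2$ carries the reduced class $e_2^{q/2}$, which restricts nontrivially ($res^{\Z/4}_{\Z/2}(e_2)=e_1^2$) onto the adjacent trivially-$2$-torsion cells; this nonzero entry breaks the splitting and is removed only by passing to $\xsp$, where the reduced $\Z/4$- and $\Di$-classes are regrouped with the other reduced classes. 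Verifying that no other nilpotent restriction survives across any incidence occurring in $X$ --- that is, that Remark~\ref{rem:separatepolyext} is exhaustive --- is the one step demanding care, and it is handled uniformly by the restriction data of Proposition~\ref{prop:restriction} together with the fact (Table~\ref{table:subcomplexes}) that every edge of $X_s$ has stabilizer $\Z/4$ and that $\Z/2,\Z/6$ contain no non-central $2$-torsion, so that nothing of $X_s$ sits above an off-$X_s$ cell.
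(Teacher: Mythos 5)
Your proposal is correct and takes essentially the same route as the paper: both split the short exact sequence of $E_1$-complexes for $X_s \subset X$ (respectively $\xsp \subset X$ in degrees $q \equiv 2 \bmod 4$) into a direct sum of $d_1$-subcomplexes via the nilpotent/reduced mismatch between stabilizers on and off the subcomplex, identify the rows $q \equiv 0 \bmod 4$ with the cellular cochain complex of ${}_\Gamma \backslash X$, and use $4$-periodicity to reduce to the first four rows. The only differences are in the micro-justifications: you kill the off-diagonal $d_1$-block by noting that its components are restriction maps and hence send nilpotent classes to nilpotent classes (Remark~\ref{rem:separatepolyext}), where the paper instead applies Steenrod operations ($Sq^3$ or $Sq^2 Sq^1$, respectively $Sq^2$) commuting with the coboundary; and you assemble the periodicity class inside $E_1^{0,4}$ from the compatible generators $e_4$, $e_2^2$, $e_1^4$, where the paper invokes Proposition~\ref{prop:Bianchiperiodicity}.
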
 
 By the periodicity established in Proposition~\ref{prop:Bianchiperiodicity}, to prove this theorem 
 it is sufficient to calculate only the first four rows. 

\begin{proof}[Proof in odd degrees $q$]
 For the inclusion $X_s \subset X$, where $\dim X_s = 1$ and $\dim X = 2$,
 Sequence~(\ref{relativeSpectralSequence}) is concentrated in the following diagram.
\begin{equation} \label{cellularSES}
 \begin{tikzpicture}[descr/.style={fill=white,inner sep=1.5pt}]
        \matrix (m) [
            matrix of math nodes,
            row sep=1.4em,
            column sep=1.4em,
            text height=3.95ex, text depth=2.95ex
        ]
        {{} & 0   & 0 & 0  & {} \\
        0 &  E_1^{2,q}(X, X_s) &  E_1^{2,q}(X) & 0 & 0  \\        
        0 &  E_1^{1,q}(X, X_s) &  E_1^{1,q}(X) & E_1^{1,q}(X_s) & 0  \\    
        0 &  E_1^{0,q}(X, X_s) &  E_1^{0,q}(X) & E_1^{0,q}(X_s) & 0  \\    
        {} & 0 & 0 & 0.  & {} \\
        };
        \path[overlay,->, font=\scriptsize,>=latex]
        (m-2-2) edge node[descr,xshift=-4.4ex] {$d_1^{2,q}$}  (m-1-2)
        (m-2-3) edge node[descr,xshift=-4.4ex] {$d_1^{2,q}$}  (m-1-3)
        (m-2-4) edge node[descr,xshift=-4.4ex] {$d_1^{2,q}$} (m-1-4)
        (m-2-1) edge (m-2-2)
	(m-2-2) edge (m-2-3) 
        (m-2-3) edge (m-2-4)
        (m-2-4) edge (m-2-5)
	(m-3-2) edge node[descr,xshift=-4.4ex] {$d_1^{1,q}$} (m-2-2)
        (m-3-3) edge node[descr,xshift=-4.4ex]   {$d_1^{1,q}$}  (m-2-3)
        (m-3-4) edge node[descr,xshift=-4.4ex]   {$d_1^{1,q}$}  (m-2-4) 
        (m-3-1) edge (m-3-2)
	(m-3-2) edge (m-3-3) 
        (m-3-3) edge (m-3-4)
        (m-3-4) edge (m-3-5)
	(m-4-2) edge node[descr,xshift=-4.4ex]  {$d_1^{0,q}$} (m-3-2)
        (m-4-3) edge node[descr,xshift=-4.4ex]    {$d_1^{0,q}$} (m-3-3)
        (m-4-4) edge node[descr,xshift=-4.4ex]    {$d_1^{0,q}$} (m-3-4) 
        (m-4-1) edge (m-4-2)
	(m-4-2) edge (m-4-3) 
        (m-4-3) edge (m-4-4)
        (m-4-4) edge (m-4-5)
	(m-5-2) edge (m-4-2)
        (m-5-3) edge  (m-4-3)
        (m-5-4) edge (m-4-4) 
  ;
\end{tikzpicture} 
\end{equation}\normalsize
Then, for fixed $p$, we have a splitting of $\ef$-modules,
$E_1^{p,q}(X) \cong E_1^{p,q}(X_s) \oplus E_1^{p,q}(X, X_s)$.
Now with respect to this splitting, there are no nontrivial maps between $E_1^{p,q}(X_s)$ 
and $E_1^{p+1,q}(X, X_s)$ because of the following mismatch:  
when $q$ is odd, classes in $E_2^{p,q}(X_s)$ are nilpotent by Proposition~\ref{prop:SLfincoh}.  
On the other hand, classes in the relative cohomology group $E_2^{p+1,q}(X, X_s)$ 
are all reduced, since any cell stabilizer of a cell not in $X_s$ 
has mod-$2$ cohomology isomorphic to $\Cohomol^*(\Z/2)$. 
We note that the coboundary operator commutes with Steenrod operations.  Then  Proposition~\ref{prop:SLfincoh} 
and the application of either $Sq^2 Sq^1$ or $Sq^3$ implies the vanishing result for maps between $E_1^{p,q}(X_s)$ 
and $E_1^{p+1,q}(X, X_s)$.  Finally, we remark that the map from $E_1^{p,q}(X)$ to $E_1^{p,q}(X_s)$ takes 
nilpotent classes to nilpotent classes. 

Therefore, the $d_1^{p,q}$ differentials split over $E_1^{p,q}(X) \cong E_1^{p,q}(X_s) \oplus E_1^{p,q}(X, X_s)$.
So Sequence~(\ref{cellularSES}) splits not only level-wise,
but as a short exact sequence of chain complexes.
Taking homology with respect to $d_1^{p,q}$ then yields the desired splitting,
\[
 E_2^{p,q}(X) \cong E_2^{p,q}(X_s) \oplus E_2^{p,q}(X, X_s).
 \]
\end{proof}

\begin{proof}[Proof in degrees $q = 4k+2$]
For the inclusion $\xsp \subset X$,  Sequence~(\ref{relativeSpectralSequence})
becomes the short exact sequence of chain complexes
\begin{equation} \label{subsubSES}
0 \to E_1^{p,2}(X, \xsp) \to E_1^{p,2}(X) \to E_1^{p,2}(\xsp) \to 0.
\end{equation}
Recall that $\xsp$ is the subcomplex of $X_s$ 
consisting of cells whose stabilizer group contains a copy of $\Q$,
and such cells are $0$-dimensional. 
So $E_1^{p,2}(\xsp)$ is concentrated in the module $E_1^{0,2}$.  
In addition, classes in $E_1^{0,2}(\xsp)$ are nilpotent whereas
classes in $E_1^{1,2}(X,\xsp)$, being associated to edges, are reduced
--- all of the edge stabilizers are isomorphic to $\Z/2$ or $\Z/4$ 
(the latter occurring when the edge is part of the non-central $2$-torsion subcomplex).  
The splitting argument is now the same as in the case for $q$ odd, except for that we use $Sq^2$.
So Sequence~(\ref{subsubSES}) splits, not only level-wise, 
but as a short exact sequence of chain complexes.
Finally, as $\xsp$ is $0$-dimensional, $E_1^{p,2}(\xsp) \cong E_{\infty}^{p,2}(\xsp)$
is concentrated in column $p = 0$, where it is isomorphic to $\Cohomol^2_\Gamma(\xsp)$.
\end{proof}  
 
\begin{proof}[Proof in degrees $q = 4k$]

The $E_1$ page of the spectral sequence is the co-chain complex 
\[ 
0 \to \bigoplus\limits_{\sigma_0 \in  _\Gamma\backslash X^{(0)}} \Cohomol^{4k}(\Gamma_{\sigma_0}) \to 
         \bigoplus\limits_{\sigma_1 \in  _\Gamma\backslash X^{(1)}} \Cohomol^{4k}(\Gamma_{\sigma_1}) \to
         \bigoplus\limits_{\sigma_2 \in  _\Gamma\backslash X^{(2)}} \Cohomol^{4k}(\Gamma_{\sigma_2}) \to  0 
 \]
Furthermore,  by Proposition~\ref{prop:SLfincoh}, $\Cohomol^{4k}(\Gamma_{\sigma_i}) \cong \ef$ and the horizontal maps are induced by cell inclusion.  Taking the homology of the co-chain complexes yields the isomorphism
$$ E_2^{p,4k}(X) \cong \Cohomol^{p}(_\Gamma \backslash X).$$
 \end{proof}

 Theorem~\ref{thm:E2page} describes the rows of the $E_2$ page in terms of rows of $E_2$ pages of subcomplexes and relative complexes.  The cohomology of the former terms will be
 calculated in Section \ref{Calculation of the non-polynomial part of the cohomology}.  We calculate the latter terms next, first establishing some notation.

 \begin{notation}\label{not:Euler}
 Here and in what follows,  $\beta_q(_\Gamma \backslash X_s) := \dim_{\rationals}\Cohomol_q(_\Gamma \backslash X_s ; \thinspace \rationals)$.  Denote by $\chi(_\Gamma \backslash X_s) = \beta_0(_\Gamma \backslash X_s) -\beta_1(_\Gamma \backslash X_s)$ 
 the Euler characteristic of the orbit space $_\Gamma \backslash X_s$ of the non-central $2$--torsion subcomplex.
\end{notation}

For $\calO$ not the Gaussian or Eisensteinian integers, the Euler characteristic of the hyperbolic orbit space $_\Gamma \backslash X$ vanishes
 because its boundary consists of disjoint $2$--tori \cite{Serre}*{p. 513}.
 Consequently, the Betti numbers $\beta_1$ and $\beta_2$ of the hyperbolic orbit space $_\Gamma \backslash X$ satisfy $1 -\beta_1 +\beta_2 = 0$,
 so we can replace $\beta_{2}$  by $\beta_{1} -1$ when it is convenient.
\begin{notation}
 For $q \in \{1, 2\}$, denote the dimension $\dim_{\F_2}\Cohomol^q(_\Gamma \backslash X ; \thinspace \F_2)$ by $\beta^q$.
\end{notation}
 The Universal Coefficient Theorem yields 
 $$ \Cohomol^q(_\Gamma \backslash X ; \thinspace \F_2) \cong {\rm Hom}(\Cohomol_q(_\Gamma \backslash X ; \thinspace \Z), \thinspace \F_2) 
 \oplus {\rm Ext}(\Cohomol_{q-1}(_\Gamma \backslash X ; \thinspace \Z), \thinspace \F_2).$$
 As  $X$ is $2$-dimensional, the group $\Cohomol_2(_\Gamma \backslash X ; \thinspace \Z)$ contains no torsion, 
 so we obtain that the dimension $\beta^2 $ 
 equals the Betti number $\beta_2$ plus the number $N$ of $2$--torsion summands of 
 $\Cohomol_1(_\Gamma \backslash X ; \thinspace \Z)$. 
 As $\Cohomol_0(_\Gamma \backslash X ; \thinspace \Z)$ contains no torsion, 
 we obtain that $\beta^1 = \beta_1 + N$. 
 The number $N$ vanishes for all absolute values of the discriminant less than $296$
 and has been determined in~\cite{higher_torsion} on a database
which includes all the Bianchi groups of ideal class numbers $1$, $2$, $3$ and $5$, 
most of the cases of ideal class number $4$,
as well as all of the cases of discriminant absolute value bounded by $500$.
\begin{notation} \label{notation:c}
Denote by $c$ the co-rank (i.e., the rank of the cokernel) of the map 
 \\ $\Cohomol^{1} (_\Gamma \backslash X; \thinspace \F_2) \rightarrow \Cohomol^{1} (_\Gamma \backslash X_s; \thinspace \F_2) $
 induced by the inclusion $X_s \subset X$.
\end{notation}

\begin{notation}
 Let $v$ denote the number of conjugacy classes of subgroups of quaternionic type~$\Q$ in $\SLO$,
 whether or not they are contained in a binary tetrahedral group $\Te$, and define 
\begin{center}$\sign(v) := $\scriptsize$\begin{cases} 
                   0, & v = 0,\\
		    1,& v> 0.
                  \end{cases}$\normalsize
\end{center}\end{notation}
There is a formula for $v$ in terms of the prime divisors of the discriminant of the ring 
$\calO_{-m}$ of integers 
\cite{Kraemer}. We use results from~\cites{RahmTorsion, AccessingFarrell} 
that the endpoints of $_\Gamma \backslash X_s$
are precisely the orbits of vertices with stabilizer group $\Te$,
and that the bifurcation points of $_\Gamma \backslash X_s$
are precisely the orbits of vertices with stabilizer group $\Q$.
We consider endpoints and bifurcation points of $_\Gamma \backslash X_s$
as ``necessary'' vertices,
because they cannot be eliminated during the reduction of the torsion subcomplex $X_s$.  This is 
due to the cohomology of their stabilizers,
which is different from the cohomology of all edge stabilizers.
The reduction of the torsion subcomplex $X_s$ eliminates all of the other vertices,
except for one orbit of vertices on components of type $(o)$, 
which is needed for the cell structure of $\circlegraph$, 
but can be chosen arbitrarily on the component~\cites{RahmTorsion, AccessingFarrell}.
This is why we can think of $v$ as the number of ``necessary'' vertices 
(endpoints or bifurcation points) of $_\Gamma \backslash X_s$.  Further, 
we can count these as the vertices of the $\Gamma$-quotient of a reduced non-central $2$-torsion subcomplex, 
in contrast to the ``spurious'' vertices found on connected components of type $\circlegraph$, which we omit. 
    
\begin{prop}\label{prop:relativeXs}
There is an isomorphism $E_2^{p,q}(X,X_s) \cong \Cohomol^p(_\Gamma \backslash X,\thinspace _\Gamma \backslash X_s)$.  In particular,
\[
\dim_{\ef} E_2^{p,q}(X,X_s) =
\begin{cases}
  0,                      &   p = 0,  \\
  \beta^1(_\Gamma \backslash X) + c + \chi(_\Gamma \backslash X_s) - 1,    &  p = 1,  \\
  \beta^2(_\Gamma \backslash X) +c,    &  p = 2.  
\end{cases}
\]
\end{prop}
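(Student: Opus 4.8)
The plan is to first identify the relative term $E_2^{p,q}(X,X_s)$ with the ordinary relative cohomology of the quotient pair $({}_\Gamma\backslash X,\,{}_\Gamma\backslash X_s)$, independently of $q$, and then to extract the three dimensions from the long exact sequence of that pair, feeding in the definition of the corank $c$ from Notation~\ref{notation:c}.

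First I would unwind the relative term. By Sequence~(\ref{relativeSpectralSequence}), $E_1^{p,q}(X,X_s)$ is the kernel of the restriction $E_1^{p,q}(X)\to E_1^{p,q}(X_s)$, hence the direct sum of the groups $\Cohomol^q(\Gamma_\sigma)$ taken over orbit representatives of $p$-cells $\sigma$ of $X$ that do \emph{not} lie in $X_s$. As recorded before Note~\ref{hypo}, every such stabilizer has mod-$2$ cohomology isomorphic to $\Cohomol^*(\Z/2)\cong\ef[e_1]$, so $\Cohomol^q(\Gamma_\sigma)\cong\ef$ for all $q\geq 0$ and there are no nilpotent classes; in particular $E_1^{p,q}(X,X_s)$ is, under this identification, independent of $q$. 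By Note~\ref{hypo} the inclusion $\Center\hookrightarrow\Gamma_\sigma$ induces an isomorphism $\Homp{*}(\Gamma_\sigma)\to\Homp{*}(\Center)$ for each such cell, and since these rings are entirely reduced, any restriction map $\Cohomol^q(\Gamma_\sigma)\to\Cohomol^q(\Gamma_\tau)$ between two such stabilizers factors compatibly through the common restriction to $\Center$ and is therefore an isomorphism $\ef\to\ef$, i.e. the identity. Thus $(E_1^{*,q}(X,X_s),\,d_1)$ is exactly the cellular cochain complex $C^*({}_\Gamma\backslash X,\,{}_\Gamma\backslash X_s;\ef)$, and taking $d_1$-homology will give
\[
E_2^{p,q}(X,X_s)\;\cong\;\Cohomol^p({}_\Gamma\backslash X,\,{}_\Gamma\backslash X_s;\ef)
\]
independently of $q$, which is the stated isomorphism.

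Next I would run the long exact sequence of the pair. Writing $Y:={}_\Gamma\backslash X$ and $Y_s:={}_\Gamma\backslash X_s$, the complex $X$ is contractible so $Y$ is connected and $\Cohomol^0(Y)\cong\ef$, while $Y_s$ is a graph, giving $\dim_{\ef}\Cohomol^0(Y_s)=\beta_0({}_\Gamma\backslash X_s)$, $\dim_{\ef}\Cohomol^1(Y_s)=\beta_1({}_\Gamma\backslash X_s)$, and $\Cohomol^2(Y_s)=0$. In the long exact sequence
\begin{multline*}
0\to\Cohomol^0(Y,Y_s)\to\Cohomol^0(Y)\xrightarrow{\rho_0}\Cohomol^0(Y_s)\xrightarrow{\delta_0}\Cohomol^1(Y,Y_s)\xrightarrow{j_1}\Cohomol^1(Y)\\
\xrightarrow{\rho_1}\Cohomol^1(Y_s)\xrightarrow{\delta_1}\Cohomol^2(Y,Y_s)\xrightarrow{j_2}\Cohomol^2(Y)\to 0
\end{multline*}
the map $\rho_1$ is precisely the restriction whose cokernel has rank $c$, so $\dim\im\rho_1=\beta_1({}_\Gamma\backslash X_s)-c$. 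For $p=0$ the generator of $\Cohomol^0(Y)$ restricts nontrivially to each component of $Y_s$, so $\rho_0$ is injective and $\Cohomol^0(Y,Y_s)=0$. For $p=1$ I would compute, using $\dim\im\rho_0=1$, that $\dim\Cohomol^1(Y,Y_s)=\dim\im\delta_0+\dim\ker\rho_1=(\beta_0({}_\Gamma\backslash X_s)-1)+(\beta^1-\beta_1({}_\Gamma\backslash X_s)+c)=\beta^1+c+\chi({}_\Gamma\backslash X_s)-1$. For $p=2$, since $\Cohomol^2(Y_s)=0$ the map $j_2$ is onto with image of dimension $\beta^2$, while $\ker j_2=\im\delta_1$ has dimension $\beta_1({}_\Gamma\backslash X_s)-\dim\im\rho_1=c$, giving $\dim\Cohomol^2(Y,Y_s)=\beta^2+c$.

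The hard part will be the identification in the first step: one must be certain that, off $X_s$, the cell stabilizers have cohomology concentrated in the reduced $\ef[e_1]$-type and that all the $d_1$ restriction maps are genuine isomorphisms, so that the relative spectral sequence truly degenerates to the cellular cochain complex of the quotient pair (this is where Proposition~\ref{prop:SLfincoh} and Note~\ref{hypo} are essential, the latter to force every face restriction to be the identity on $\ef$). Once that identification is in hand, the remaining bookkeeping in the long exact sequence is routine linear algebra.
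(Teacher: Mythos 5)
Your proof is correct and takes essentially the same route as the paper: identify $E_1^{*,q}(X,X_s)$ with the relative cellular cochain complex $C^*(_\Gamma \backslash X, \thinspace _\Gamma \backslash X_s; \ef)$ (all stabilizers of cells off $X_s$ have cohomology $\ef$ in every degree, with face restrictions the identity), and then read off the three dimensions from the long exact sequence of the quotient pair, exactly as in Proposition~\ref{prop:relativeXs}. Your justification that the face restrictions are isomorphisms---factoring through the restriction to the center via Note~\ref{hypo}---is if anything slightly more explicit than the paper's appeal to the triviality of the automorphism group of $\ef$, which pins down the map only once one already knows it is an isomorphism.
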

 
 \begin{proof}
 We start with the split short exact sequence of chain complexes from Sequence~(\ref{cellularSES}), 
\begin{equation}\label{eqn:oddSES}
    0 \rightarrow E_1^{p,q}(X, X_s) \rightarrow  E_1^{p,q}(X) \rightarrow E_1^{p,q}(X_s) \rightarrow 0.
\end{equation}
Every cell stabilizer in $(X, X_s)$ has cohomology isomorphic to $\Cohomol^*(\Z/2)$, so
\[
  E_1^{p,q}(X, X_s)  \cong \bigoplus\limits_{\sigma \in  _\Gamma\backslash  (X,X_s)^{(p)}} \Cohomol^q( \Z/2 ).
\]
As $\Cohomol^q( \Z/2 ) \cong \ef$, which has trivial automorphism group,
the isomorphism is constant over all cells.  That is, the inclusion of an $n$-cell into an $(n+1)$-cell induces a unique isomorphism 
in the cohomology of the associated isotropy groups.  Hence $ E_1^{p,q}(X, X_s) \cong \ef \otimes_{\Z}  C^p(_\Gamma \backslash (X, X_s))$,
with the differential $d_1^{p,q}$ given as the coboundaries of 
$C^p(_\Gamma \backslash (X, X_s)) \cong C^p(_\Gamma \backslash X)/C^p(_\Gamma \backslash X_s)$.
This implies our first claim, that
\[
E_2^{p,q}(X,X_s) \cong   \Cohomol^p(_\Gamma \backslash X, \ _\Gamma \backslash X_s).
\]
As~$X$ is a $2$-dimensional cell complex and $X_s$ is $1$-dimensional, 
the long exact sequence associated to the relative cohomology of the pair 
$(_\Gamma \backslash X, \ _\Gamma \backslash X_s)$ is concentrated in
\begin{center} 
\begin{tikzpicture}[descr/.style={fill=white,inner sep=1.5pt}]
        \matrix (m) [
            matrix of math nodes,
            row sep=1em,
            column sep=2.5em,
            text height=1.99ex, text depth=0.75ex
        ]
        {  
	   & \Cohomol^2(_\Gamma \backslash X, \ _\Gamma \backslash X_s)  &\Cohomol^{2} (_\Gamma \backslash X)& 0 & \hdots \\ 
             & \Cohomol^1(_\Gamma \backslash X, \ _\Gamma \backslash X_s) &\Cohomol^{1} (_\Gamma \backslash X) & \Cohomol^{1} (_\Gamma \backslash X_s) &  \\
	 0 & \Cohomol^0(_\Gamma \backslash X, \ _\Gamma \backslash X_s) & \Cohomol^{0} (_\Gamma \backslash X) & \Cohomol^{0} (_\Gamma \backslash X_s) &\\ 
        };

        \path[overlay,->, font=\scriptsize,>=latex]
        (m-1-4) edge (m-1-5) 
        (m-1-2) edge (m-1-3) 
        (m-1-3) edge (m-1-4) 
        (m-2-4) edge[out=-355,in=-175]  (m-1-2)
        (m-2-2) edge (m-2-3)
        (m-2-3) edge (m-2-4)
        (m-3-4) edge[out=-355,in=-175]  (m-2-2)
        (m-3-2) edge (m-3-3)
        (m-3-3) edge (m-3-4)
        (m-3-1) edge (m-3-2)
;
\end{tikzpicture}
\end{center}
Since $X_s \subset X$ with $X$ connected, we immediately see that $\Cohomol^{0} (_\Gamma \backslash X)$ 
maps isomorphically to a 1-dimensional $\F_2$--subspace  in $\Cohomol^{0} (_\Gamma \backslash X_s)$,
yielding $\Cohomol^0(_\Gamma \backslash X, \ _\Gamma \backslash X_s) = 0$.
Therefore, the map from $\Cohomol^{0} (_\Gamma \backslash X_s)$ to $\Cohomol^1(_\Gamma \backslash X, \ _\Gamma \backslash X_s)$
has rank $\beta^0(_\Gamma \backslash X_s) -1$.
In addition, using the co-rank~$c$ from Notation~\ref{notation:c}, we have to complement $\Cohomol^1(_\Gamma \backslash X, \ _\Gamma \backslash X_s)$
by an $\F_2$--subspace of dimension 
$\beta^1(_\Gamma \backslash X)- (\beta^1(_\Gamma \backslash X_s) -c)$.
This yields the claimed formula $\chi(_\Gamma \backslash X_s) -1 +\beta^1(_\Gamma \backslash X) +c$ 
for the dimension of $\Cohomol^1(_\Gamma \backslash X, \ _\Gamma \backslash X_s)$.
The remaining terms of the long exact sequence produce 
\[
\Cohomol^2(_\Gamma \backslash X, \ _\Gamma \backslash X_s) \cong \Cohomol^{2} (_\Gamma \backslash X) \oplus ( \F_2)^c .
\]
\end{proof}

\begin{prop}\label{prop:relativeXsprime}
In the second row of the equivariant spectral sequence, \\
\[
\dim_{\ef} E_2^{p,2}(X,\xsp) =
\begin{cases}
  1 - \sign(v),                                   &   p = 0,  \\
  \beta^1(_\Gamma \backslash X) + v - \sign(v),   &  p = 1,  \\
  \beta^2(_\Gamma \backslash X),                  &  p = 2.  
  \end{cases}
\]
\end{prop}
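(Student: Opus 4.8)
The plan is to follow the template of Proposition~\ref{prop:relativeXs}, replacing the $1$--dimensional subcomplex $X_s$ by the $0$--dimensional subcomplex $\xsp$. Starting from the short exact sequence of chain complexes in Sequence~(\ref{subsubSES}), I would first identify the relative term $E_1^{p,2}(X,\xsp)$ explicitly. Every cell of $X$ lying outside $\xsp$ has a stabilizer which does \emph{not} contain a copy of $\Q$, so by Table~\ref{table:covering} its $\SLO$--type is one of $\Z/2$, $\Z/6$, $\Z/4$, or $\Di$. By Proposition~\ref{prop:SLfincoh}, each of these groups has $\Cohomol^2 \cong \ef$, and by Proposition~\ref{prop:restriction} all of the pertinent restriction maps (such as $res^{\Z/4}_{\Z/2}$ and $res^{\Di}_{\Z/4}$) are isomorphisms in degree $2$. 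Since $\ef$ has trivial automorphism group, this identifies the $d_1^{p,2}$ differential on $E_1^{*,2}(X,\xsp)$ with the relative cellular coboundary, giving $E_1^{p,2}(X,\xsp) \cong \ef \otimes_{\Z} C^p({}_\Gamma \backslash (X,\xsp))$ and hence, after passing to $d_1$--homology,
\[
E_2^{p,2}(X,\xsp) \cong \Cohomol^p\big({}_\Gamma \backslash X,\thinspace {}_\Gamma \backslash \xsp\big).
\]

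It then remains to compute the relative cohomology of this pair. Since $\xsp$ consists precisely of the orbits of vertices with stabilizer $\Q$ or $\Te$ --- that is, the bifurcation points and endpoints of ${}_\Gamma \backslash X_s$ --- its quotient ${}_\Gamma \backslash \xsp$ is a disjoint union of $v$ points, so that $\Cohomol^0({}_\Gamma \backslash \xsp) \cong \ef^{\thinspace v}$ and $\Cohomol^q({}_\Gamma \backslash \xsp) = 0$ for $q \geq 1$. I would then read the dimensions off the long exact sequence of the pair,
\[
0 \to \Cohomol^0(\mathrm{pair}) \to \Cohomol^0({}_\Gamma \backslash X) \xrightarrow{\ f\ } \Cohomol^0({}_\Gamma \backslash \xsp) \to \Cohomol^1(\mathrm{pair}) \to \Cohomol^1({}_\Gamma \backslash X) \to 0,
\]
together with the tail, which gives $\Cohomol^2(\mathrm{pair}) \cong \Cohomol^2({}_\Gamma \backslash X)$ of dimension $\beta^2({}_\Gamma \backslash X)$ since ${}_\Gamma \backslash \xsp$ has no higher cohomology. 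Because $X$ is connected, the restriction $f$ sends the generator of $\Cohomol^0({}_\Gamma \backslash X)$ to the all--ones vector in $\ef^{\thinspace v}$.

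The bookkeeping then splits along the two values recorded by $\sign(v)$. When $v = 0$ the subcomplex $\xsp$ is empty and the sequence yields $\dim \Cohomol^0(\mathrm{pair}) = 1$ and $\dim \Cohomol^1(\mathrm{pair}) = \beta^1({}_\Gamma \backslash X)$. When $v > 0$ the map $f$ is injective with cokernel of dimension $v-1$, so $\Cohomol^0(\mathrm{pair}) = 0$ while $\dim \Cohomol^1(\mathrm{pair}) = (v-1) + \beta^1({}_\Gamma \backslash X)$. Both cases are uniformly expressed by the asserted formulae $1 - \sign(v)$ and $\beta^1({}_\Gamma \backslash X) + v - \sign(v)$.

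I expect the genuinely delicate points to be structural rather than computational. The first is verifying that every cell outside $\xsp$ really does have a stabilizer with $\Cohomol^2 \cong \ef$ and that all incident restriction maps are isomorphisms in degree~$2$; this is exactly what licenses the identification of $E_1^{*,2}(X,\xsp)$ with the relative cellular cochain complex, and it is the analogue of the step in Proposition~\ref{prop:relativeXs} where all stabilizers were $\Z/2$. The second is correctly matching the integer $v$ with the number of orbits of $0$--cells of $\xsp$ (endpoints plus bifurcation points), so that the connectivity of $X$ forces the $\sign(v)$ case distinction through the map $f$. Once these are secured, the long exact sequence computation is routine.
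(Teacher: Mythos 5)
Your proposal is correct, and it reaches the right answer by a route that is recognizably different in its middle step from the paper's. You work directly in the row $q=2$: you check that every cell outside $\xsp$ has stabilizer of type $\Z/2$, $\Z/6$, $\Z/4$ or $\Di$, that each of these has $\Cohomol^2 \cong \ef$, and that all incident restrictions ($res^{\Z/4}_{\Z/2}$, $res^{\Di}_{\Z/4}$, $res^{\Di}_{\Z/2}$, $res^{\Di}_{\Z/6}$, $res^{\Z/6}_{\Z/2}$) are isomorphisms in degree $2$ by Proposition~\ref{prop:restriction}; together with the triviality of $\mathrm{Aut}(\ef)$ this identifies $E_1^{*,2}(X,\xsp)$ with the relative cellular cochain complex, so that $E_2^{p,2}(X,\xsp) \cong \Cohomol^p\bigl({}_\Gamma \backslash X, \ {}_\Gamma \backslash \xsp\bigr)$, and you finish with the long exact sequence of the quotient pair. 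This is exactly the template of Proposition~\ref{prop:relativeXs} transplanted to the pair $(X,\xsp)$. The paper instead exploits periodicity: since every stabilizer occurring in $(X,\xsp)$ has cohomology of period dividing $2$, the terms $E_1^{p,q}(X,\xsp)$ and $E_2^{p,q}(X,\xsp)$ are $2$-periodic in $q$, which reduces the computation to the row $q=0$, where no restriction maps need to be checked at all (every stabilizer has $\Cohomol^0 \cong \ef$ canonically); the dimensions are then extracted from the snake-lemma long exact sequence of the defining short exact sequence of chain complexes, using $E_2^{p,0}(X) \cong \Cohomol^p({}_\Gamma \backslash X)$ from Theorem~\ref{thm:E2page}. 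The two long exact sequences coincide, and the final bookkeeping (injectivity of the map to $(\ef)^v$ when $v>0$ via connectedness of $X$, giving the $\sign(v)$ dichotomy) is identical. What each approach buys: the paper's periodicity argument is shorter and immune to any worry about which restriction maps occur, whereas your argument requires the degree-$2$ case-check but yields the explicit extra statement $E_2^{p,2}(X,\xsp) \cong \Cohomol^p\bigl({}_\Gamma \backslash X, \ {}_\Gamma \backslash \xsp\bigr)$, the precise analogue of the first claim of Proposition~\ref{prop:relativeXs}, which the paper leaves implicit.
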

 
\begin{proof}
We note that from Proposition~\ref{prop:SLfincoh},
 the cohomology rings of stabilizers of cells in $(X, \xsp)$ 
 are either isomorphic to $\Cohomol^*(\Z/4)$ or $\Cohomol^*(\Z/2)$. 
 Both cohomology rings have period evenly dividing~$2$.  
 Consequently, $E_1^{p,q}(X, \xsp)$ and $E_2^{p,q}(X, \xsp)$ are $2$-periodic in $q$, 
 and it is enough to investigate $E_1^{p,0}(X, \xsp)$ and $E_2^{p,0}(X, \xsp)$.   
We consider the short exact sequence of chain complexes defining~$ E_1^{p,0}(X, \xsp)$,
\[
 0 \rightarrow E_1^{p,0}(X, \xsp) \rightarrow  E_1^{p,0}(X) \rightarrow E_1^{p,0}(\xsp) \rightarrow 0.
\]
Since $ E_1^{0,0}(\xsp) \cong (\ef)^v$ and $ E_1^{p,0}(\xsp) = 0$ for $p \geq 1$,
this sequence of chain complexes is concentrated in the following diagram.

\begin{equation*} 
 \begin{tikzpicture}[descr/.style={fill=white,inner sep=1.5pt}]
        \matrix (m) [
            matrix of math nodes,
            row sep=1.4em,
            column sep=1.4em,
            text height=3.95ex, text depth=2.95ex
        ]
        {{} & 0   & 0 & 0  & {} \\
        0 &  E_1^{2,0}(X, \xsp) &  E_1^{2,0}(X) & 0  & 0  \\        
        0 &  E_1^{1,0}(X, \xsp) &  E_1^{1,0}(X) & 0  & 0  \\    
        0 &  E_1^{0,0}(X, \xsp) &  E_1^{0,0}(X)  & (\ef)^v & 0  \\    
         & 0 & 0 & 0  & \\
        };
        \path[overlay,->, font=\scriptsize,>=latex]
        (m-2-2) edge node[descr,xshift=-4.4ex] {$d_1^{2,0}$}  (m-1-2)
        (m-2-3) edge node[descr,xshift=-4.4ex] {$d_1^{2,0}$}  (m-1-3)
        (m-2-4) edge (m-1-4)
        (m-2-1) edge (m-2-2)
	(m-2-2) edge (m-2-3) 
        (m-2-3) edge (m-2-4)
        (m-2-4) edge (m-2-5)
	(m-3-2) edge node[descr,xshift=-4.4ex] {$d_1^{1,0}$} (m-2-2)
        (m-3-3) edge node[descr,xshift=-4.4ex]   {$d_1^{1,0}$}  (m-2-3)
        (m-3-4) edge  (m-2-4) 
        (m-3-1) edge (m-3-2)
	(m-3-2) edge (m-3-3) 
        (m-3-3) edge (m-3-4)
        (m-3-4) edge (m-3-5)
	(m-4-2) edge node[descr,xshift=-4.4ex]  {$d_1^{0,0}$} (m-3-2)
        (m-4-3) edge node[descr,xshift=-4.4ex]    {$d_1^{0,0}$} (m-3-3)
        (m-4-4) edge (m-3-4) 
        (m-4-1) edge (m-4-2)
	(m-4-2) edge (m-4-3) 
        (m-4-3) edge (m-4-4)
        (m-4-4) edge (m-4-5)
	(m-5-2) edge (m-4-2)
        (m-5-3) edge  (m-4-3)
        (m-5-4) edge (m-4-4) 
  ;
\end{tikzpicture} 
\end{equation*}\normalsize

Since $E_2^{p,0}(X) \cong \Cohomol^p(_\Gamma \backslash X)$ by  Theorem~\ref{thm:E2page},  
the long exact sequence obtained with the snake lemma from the above diagram is 
\begin{center} 
\begin{tikzpicture}[descr/.style={fill=white,inner sep=1.5pt}]
        \matrix (m) [
            matrix of math nodes,
            row sep=1em,
            column sep=2.5em,
            text height=1.99ex, text depth=0.75ex
        ]
        {    &   E_2^{2,0}(X, \xsp) &  \Cohomol^2(_\Gamma \backslash X) & 0 \\
             &   E_2^{1,0}(X, \xsp) &  \Cohomol^1(_\Gamma \backslash X) & 0 \\
	   0 &   E_2^{0,0}(X, \xsp) &  \Cohomol^0(_\Gamma \backslash X) & (\ef)^v  \\  
        };

        \path[overlay,->, font=\scriptsize,>=latex]
        (m-1-2) edge (m-1-3) 
        (m-1-3) edge (m-1-4) 
        (m-2-4) edge[out=-355,in=-175]  (m-1-2)
        (m-2-2) edge (m-2-3)
        (m-2-3) edge (m-2-4)
        (m-3-4) edge[out=-355,in=-175]  (m-2-2)
        (m-3-2) edge (m-3-3)
        (m-3-3) edge (m-3-4)
        (m-3-1) edge (m-3-2)
;
\end{tikzpicture}
\end{center}
By the $2$-periodicity, the top row of the sequence already yields one case of the claimed formula,
$$\dim_{\ef} E_2^{2,2}(X,\xsp) = \dim_{\ef} E_2^{2,0}(X,\xsp) = \beta^2(_\Gamma \backslash X).$$
It remains to study the five-term exact sequence given by the two bottom rows. 
Here, we note that $\Cohomol^0(_\Gamma \backslash X) \cong \ef$, 
and that $E_2^{0,0}(X, \xsp)$ is isomorphic to the kernel of the map
\[
\Cohomol^0(_\Gamma \backslash X) \rightarrow (\ef)^v,
\]
which is non-trivial precisely when $v > 0$.  
Therefore, $\dim_{\ef} E_2^{0,0}(X,\xsp) = 1 - \sign(v)$.  
Thus, we can extract a short exact sequence
\[
0 \rightarrow (\ef)^{v - \sign(v)}  \ {
\rightarrow} \  E_2^{1,0}(X, \xsp)  \rightarrow  \Cohomol^1(_\Gamma \backslash X) \rightarrow 0,  
\]
which implies $\dim_{\ef} E_2^{1,0}(X,\xsp) \cong \beta^1(_\Gamma \backslash X) + v - \sign(v)$.  
\end{proof}
 
 \begin{corollary}\label{cor:E2page}
 The $E_2$ page of the equivariant spectral sequence with $\F_2$--coefficients
 associated to the action of $\Gamma$ on $X$ is concentrated in the columns $n \in \{0, 1, 2\}$
and has the following form:   
 
 \[  
\begin{array}{l | cccl}
q = 4k+3 &  E_2^{0,3}(X_s)     &   E_2^{1,3}(X_s) \oplus (\ef)^{a_1}  &   (\ef)^{a_2} \\
q = 4k+2 &  \Cohomol^2_\Gamma(\xsp) \oplus (\ef)^{1 -\sign(v)}     &    (\ef)^{a_3}&   \Cohomol^2(_\Gamma \backslash X)  \\
q = 4k+1 &  E_2^{0,1}(X_s)     &  E_2^{1,1}(X_s) \oplus  (\ef)^{a_1} &   (\ef)^{a_2} \\
q = 4k   & \F_2  &   \Cohomol^1(_\Gamma \backslash X) &  \Cohomol^2(_\Gamma \backslash X) \\
\hline k \in \mathbb{N} \cup \{0\} &  n = 0 & n = 1 &  n = 2
\end{array}
\]
where 
\[\begin{array}{ll}
a_1 & =  \beta^1(_\Gamma \backslash X) + c  +  \chi(_\Gamma \backslash X_s) -1   \\
a_2 & =  \beta^{2} (_\Gamma \backslash X) +c, \\
a_3 & =  \beta^{1} (_\Gamma \backslash X) +v -\sign(v).
\end{array}
\]   

\end{corollary}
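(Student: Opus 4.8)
The plan is to read the $E_2$ page off entry by entry, assembling the four rows promised by Theorem~\ref{thm:E2page} and then substituting the explicit dimensions computed in Propositions~\ref{prop:relativeXs} and~\ref{prop:relativeXsprime}. No new geometric input is required: the work is purely one of collating those earlier results, together with the two dimensional facts that $X_s$ is $1$--dimensional and $\xsp$ is $0$--dimensional, and the observation that $\Cohomol^q(_\Gamma \backslash X; \thinspace \F_2)$ has $\F_2$--dimension $\beta^q(_\Gamma \backslash X)$ by definition of $\beta^q$.

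First I would treat the two odd rows $q = 4k+1$ and $q = 4k+3$ simultaneously, since Theorem~\ref{thm:E2page} gives them the identical shape $E_2^{p,q}(X) \cong E_2^{p,q}(X_s) \oplus E_2^{p,q}(X, X_s)$ for $q \in \{1,3\}$. In column $n = 0$ the relative summand vanishes by the $p = 0$ case of Proposition~\ref{prop:relativeXs}, leaving $E_2^{0,q}(X_s)$. In column $n = 1$ the relative summand contributes $(\ef)^{a_1}$ with $a_1 = \beta^1(_\Gamma \backslash X) + c + \chi(_\Gamma \backslash X_s) - 1$, giving $E_2^{1,q}(X_s) \oplus (\ef)^{a_1}$. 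In column $n = 2$ the subcomplex term $E_2^{2,q}(X_s)$ vanishes because $X_s$ is $1$--dimensional, so only the relative term $(\ef)^{a_2}$ with $a_2 = \beta^2(_\Gamma \backslash X) + c$ survives. This produces the first and third rows of the table exactly as stated.

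Next I would handle the row $q = 4k+2$, where Theorem~\ref{thm:E2page} instead uses the pair $(X, \xsp)$. The $n = 0$ entry is $\Cohomol^2_\Gamma(\xsp) \oplus E_2^{0,2}(X, \xsp)$, and by Proposition~\ref{prop:relativeXsprime} the second summand has dimension $1 - \sign(v)$, yielding $\Cohomol^2_\Gamma(\xsp) \oplus (\ef)^{1 - \sign(v)}$. For $p \geq 1$ the row equals $E_2^{p,2}(X, \xsp)$, whose dimensions are $a_3 = \beta^1(_\Gamma \backslash X) + v - \sign(v)$ in column $n = 1$ and $\beta^2(_\Gamma \backslash X)$ in column $n = 2$; the latter I identify with $\Cohomol^2(_\Gamma \backslash X)$ since that group has $\F_2$--dimension $\beta^2(_\Gamma \backslash X)$. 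Finally, the bottom row $q = 4k$ is immediate from the last line of Theorem~\ref{thm:E2page}, with $\Cohomol^0(_\Gamma \backslash X) \cong \ef$ in column $0$.

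Since every entry is either an explicitly computed dimension or a subcomplex term of the form $E_2^{p,1}(X_s)$, $E_2^{p,3}(X_s)$, or $\Cohomol^2_\Gamma(\xsp)$ that is deferred to Section~\ref{Calculation of the non-polynomial part of the cohomology}, there is no genuine obstacle; the proof is essentially bookkeeping. The only point demanding care is tracking the two different relative pairs---namely $(X, X_s)$ in the odd rows versus $(X, \xsp)$ in the row $q = 4k+2$---and confirming that the $n = 2$ entry $\Cohomol^2(_\Gamma \backslash X)$ is consistent between its appearance in the $q = 4k$ row (directly from Theorem~\ref{thm:E2page}) and in the $q = 4k+2$ row (via the $2$--periodicity $E_2^{2,2}(X, \xsp) \cong E_2^{2,0}(X, \xsp) \cong \Cohomol^2(_\Gamma \backslash X)$ established inside Proposition~\ref{prop:relativeXsprime}). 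The assertion that the page is concentrated in columns $n \in \{0,1,2\}$ then follows at once from $X$ being $2$--dimensional.
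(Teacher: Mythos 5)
Your proposal is correct and follows essentially the same route as the paper: the paper's own proof consists precisely of citing Theorem~\ref{thm:E2page} for the row structure and Propositions~\ref{prop:relativeXs} and~\ref{prop:relativeXsprime} for the dimensions of the relative terms, which is exactly your bookkeeping. Your additional explicit observations---that $E_2^{2,q}(X_s)$ vanishes since $X_s$ is $1$--dimensional, that the $p=0$ relative term vanishes, and that the two appearances of $\Cohomol^2(_\Gamma \backslash X)$ are consistent via $2$--periodicity---are details the paper leaves implicit, not a different argument.
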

 
\begin{proof}
Theorem~\ref{thm:E2page} describes the structure of the $E_2$ page of the spectral sequence by row as cohomology of the quotient space ($q = 4k$) and direct sums of cohomology groups of subcomplexes and relative complexes (other values of $q$).  Propositions \ref{prop:relativeXs} and \ref{prop:relativeXsprime} provide a description of the cohomology of the relative complexes.
\end{proof}


\section{Calculation of the spectral sequence on the subcomplex} \label{Calculation of the non-polynomial part of the cohomology}

In Corollary~\ref{cor:E2page}, we have expressed the $E_2$ page of the equivariant spectral sequence converging to
$\Cohomol^*(\SLO)$
in terms of invariants of the quotient space and the $E_2$ page associated to the non-central 2-torsion subcomplex $X_s$. 
In this section, we will investigate that latter $E_2$ page, $E_2^{p,q}(X_s)$.
Recall that the only $2$-torsion elements which stabilize cells outside of the non-central $2$--torsion subcomplex are in the 
$\Z/2$ center. This allows us to relate back to and use results about the $2$--torsion subcomplex for $\PSLtwo(\ringOm)$. 
What we have to do in order to establish this relation is to show that $E_2^{p,q}(X_s)$ 
splits as a direct sum indexed by the connected components of $X_s$;
then further that the reduction of subcomplex components is in complete agreement with the analogous reduction for $\PSLtwo(\ringOm)$.
We do this in the following lemma.

\begin{lemma} \label{splitting over components}
The terms $E_2^{p,q}(X_s)$ split into direct summands each with support on one connected component
of the quotient of a \emph{reduced} non-central $2$--torsion subcomplex.
\end{lemma}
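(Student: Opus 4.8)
The plan is to work, for each fixed internal degree $q$, with the single two-term cochain complex whose cohomology gives the columns $E_2^{\bullet,q}(X_s)$, and to read the statement off from the shape of its $d_1$ differential. Since the orbit space ${}_\Gamma\backslash X_s$ is a graph, this complex is
\[
0 \longrightarrow \bigoplus_{v \,\in\, {}_\Gamma\backslash X_s^{(0)}} \Cohomol^q(\Gamma_v)
\xrightarrow{\ d_1\ }
\bigoplus_{e \,\in\, {}_\Gamma\backslash X_s^{(1)}} \Cohomol^q(\Gamma_e) \longrightarrow 0,
\]
with $E_2^{p,q}(X_s)$ its $p$-th cohomology group. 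By Property~\ref{ss3}, $d_1$ is a signed sum of the restriction maps $\Cohomol^q(\Gamma_v) \to \Cohomol^q(\Gamma_e)$ over the incidences $v \in \partial e$, the restriction being defined because $\Gamma_e \subseteq \Gamma_v$ in this situation.

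I would first obtain the decomposition over the components of the \emph{unreduced} quotient. An incidence $v \in \partial e$ can only occur when $v$ and $e$ lie in the same connected component $Y$ of ${}_\Gamma\backslash X_s$, so the matrix of $d_1$ is block diagonal with one block for each $Y$. Hence the cochain complex is the direct sum of its restrictions to the components $Y$, and since cohomology commutes with direct sums, $E_2^{p,q}(X_s) \cong \bigoplus_Y E_2^{p,q}(X_s)\big|_Y$.

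The heart of the argument --- and the step I expect to be the main obstacle --- is to upgrade this to the \emph{reduced} quotient, by showing that each elementary reduction move preserves $d_1$-cohomology in every internal degree $q$. Merging two edges $\tau_1,\tau_2$ along a vertex $\sigma$ (conditions~$\cellCondition$ and~$\weakerCondition$) is precisely the case in which $\sigma$ has degree two and each restriction $\Cohomol^q(\Gamma_\sigma)\to\Cohomol^q(\Gamma_{\tau_i})$ is an isomorphism; the block of $d_1$ issuing from the summand $\Cohomol^q(\Gamma_\sigma)$ is then an isomorphism onto $\Cohomol^q(\Gamma_{\tau_1})$, so a Gaussian elimination cancels the acyclic pair $(\sigma,\tau_1)$ without altering $\Homol^\bullet$, the surviving edge $\tau_2$ playing the role of the fused edge. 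Cutting off a terminal vertex is the same cancellation applied to a degree-one vertex and its unique edge. The key point is that these cancellations only use an isomorphism of a restriction map, so, unlike the Farrell statement of Theorem~\ref{pivotal}, they are valid for ordinary equivariant cohomology and in every degree $q$, which is exactly what the present $E_2$ page requires.

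Finally I would verify that conditions~$\cellCondition$ and~$\weakerCondition$ for the $\SLO$-action agree with those for the $\PSLO$-action on the identified $2$-torsion subcomplex, so that the sequence of reductions is literally the one carried out in~\cites{RahmTorsion, AccessingFarrell}. The combinatorial part of condition~$\cellCondition$ and the isomorphism $\Gamma_{\tau_1}\cong\Gamma_{\tau_2}$ are unchanged by passage through the central extension of Table~\ref{table:covering}, while Proposition~\ref{prop:restriction} shows that a restriction between $\SLO$-stabilisers is an isomorphism on mod $2$ cohomology exactly when the restriction between the corresponding $\PSLO$-quotients is --- the relevant instance being $res^{\Di}_{\Z/4}$, which eliminates the degree-two vertices of stabiliser type $\Di$ or $\Z/4$ and mirrors the isomorphism $\Cohomol^*(\Dthree)\cong\Cohomol^*(\Z/2)$ used for $\PSLO$. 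Consequently the reduced quotient produced for $\SLO$ is exactly the one of Table~\ref{table:subcomplexes}, with components of the four types, and combining this identification with the component splitting of the second paragraph yields the asserted direct sum decomposition indexed by the components of the reduced non-central $2$-torsion subcomplex.
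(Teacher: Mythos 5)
Your proposal is correct, and it reaches the same two milestones as the paper's proof --- splitting over connected components, and passage from the unreduced to the reduced subcomplex --- but by a more self-contained route. The paper quotes both halves from earlier work: the component splitting is imported from the Section~6 argument of~\cite{AccessingFarrell} (classes supported on one component restrict trivially to stabilizers in any other), and the reduction itself is imported from the $\PSLtwo(\ringOm)$ theory of~\cites{RahmTorsion,AccessingFarrell}; the only genuinely new content in the paper's proof is the observation that under the central extension the fused stabilizers $\Z/2$ and $S_3$ become $\Z/4$ and $\Di$, and the ring isomorphism $\Cohomol^*(\Di)\cong\Cohomol^*(\Z/4)$ replaces $\Cohomol^*(S_3)\cong\Cohomol^*(\Z/2)$, so every $\PSLtwo$ edge fusion is also an $\SLtwo$ edge fusion. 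You instead prove both halves from scratch at the cochain level: block-diagonality of $d_1$ over the components of the quotient graph, and cancellation of acyclic pairs (Gaussian elimination) for each merge of edges satisfying conditions~$\cellCondition$ and~$\weakerCondition$ and each cut of a terminal edge, with the required isomorphisms supplied by Proposition~\ref{prop:restriction} (notably $res^{\Di}_{\Z/4}$) --- which is the paper's key observation in different clothing. Your version buys two things worth noting: it works degree by degree in ordinary equivariant cohomology, so it never needs to route through the Farrell-cohomology statement of Theorem~\ref{pivotal}, and it makes explicit why $X_s$ and its reduction have identical $E_2$ terms in every bidegree, a point the paper's proof leaves implicit in its appeal to the $\PSLtwo$ reduction. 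What the paper's version buys is brevity and consistency with the torsion subcomplex machinery already set up; both proofs rest on the same arithmetic fact about how the central $\{\pm 1\}$ deforms the stabilizer types of Table~\ref{table:covering}.
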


\begin{proof}
An argument in Section 6 of~\cite{AccessingFarrell} explains why the mod $2$ cohomology of $\PSLtwo(\ringOm)$
 splits into a direct sum above the virtual cohomological dimension.
The argument references the calculations in~\cite{BerkoveMod2},
and notes that classes that arise in one component of the non-central $2$--torsion subcomplex do not restrict to subgroups in others. 
This implies that products between classes that come from distinct components multiply trivially in cohomology.

We need to extend this result from the projective special linear group to the special linear group.
In other words, we need to make sure that the result is compatible with the central extension of $\PSLtwo(\ringOm)$ 
by $\{\pm 1\} \cong \Z/2$.
Summarizing results in~\cite{RahmTorsion} where the $2$--torsion subcomplex is developed, we note 
that all edge fusions, which happen during the reduction of the subcomplex for $\PSLO$,
 remove a vertex with stabilizer
 $\Z/2$ or $S_3$.  The adjacent edges which are fused both have stabilizer $\Z/2$.
  Since $\Cohomol^*(S_3) \cong \Cohomol^*(\Z/2)$ and since this is an isomorphism of rings,
 the mod $2$ cohomology of the component corresponding to the 
 $2$--torsion subcomplex is ring-isomorphic
 to the mod $2$ cohomology coming from the original component.  

Now the central $\{\pm 1\}$ group acts trivially on the retracted cell complex,
 so as noted in the discussion after Definition~\ref{non-central torsion subcomplex}, 
the \emph{non-central} $2$--torsion subcomplex for $\SLtwo(\ringOm)$
 is identical to the $2$--torsion subcomplex for $\PSLtwo(\ringOm)$. 
 The stabilizers for $\SLtwo(\ringOm)$ are extensions of the stabilizers in $\PSLtwo(\ringOm)$ by $\{\pm 1\}$.
  In particular,
$\Z/2$ is extended to $\Z/4$ and $S_3$ is extended to~$\Di$.
Since there is a ring isomorphism  $\Cohomol^*(\Di) \cong \Cohomol^*(\Z/4)$,
we conclude that at vertices where there was an edge fusion in the $2$--torsion subcomplex in the $\PSLtwo(\ringOm)$ case,
 there will also be an edge fusion in the non-central $2$--torsion subcomplex in the $\SLtwo(\ringOm)$ case.
 Therefore, the non-central $2$--torsion subcomplex can be reduced as in the $\PSLtwo(\ringOm)$ case presented in~\cite{RahmTorsion}.
This yields a splitting for all of the terms $E_2^{p,q}(X_s)$.
\end{proof}

The main task in the remainder of this section will be to use the cohomology and restriction information gathered in Section~\ref{sec:pieces}
in order to determine the equivariant cohomology supported on the individual connected components of the quotient of a reduced non-central 
$2$--torsion subcomplex.  
The key observation for this task is that the individual connected components of the quotient of a reduced non-central 
$2$--torsion subcomplex correspond to groups which can be described as amalgamated products and HNN extensions.  
We recall the definition of an HNN extension.    

\begin{defn}
 Let $\varphi:G_2 \rightarrow G_1$ be an injection of $G_2$ into $G_1$.   An  \textit{HNN extension of $G_1$} is a group with presentation
\[
G_1 \ast_{G_2} = \  < t,G_1 \mid t^{-1}gt = \varphi(g), g \in G_2 >.
\]
The element $t$ is often referred to as the \textit{free letter}.  
\end{defn}
We note that the notation $G_1 \ast_{G_2}$ is not completely descriptive, since there may be many possible injective maps $\varphi$.  

Via Bass--Serre theory, it is known that amalgamated products and HNN extensions both act on trees.  In the amalgamated product $G=G_1 \ast_H G_2$, there is an action where the fundamental domain is given by two vertices with stabilizers $G_1$ and $G_2$,
 connected by an edge with stabilizer~$H$.  In the HNN extension $G=G_1 \ast_{G_2}$, the fundamental domain is a single vertex with a loop where the vertex stabilizer is $G_1$ and the edge stabilizer is $G_2$.  

We can calculate the cohomology of HNN extensions and amalgamated products using the next result.   

\begin{thm}
\label{thm:quotientLES}
Let $\sigma_0$ and $\sigma_1$  be $0$--cells and $1$--cells in a fundamental domain, and let $G_v$ and $G_e$ be the vertex and edge stabilizers respectively.  Then there is a long exact sequence in cohomology
\begin{equation}
\cdots \ararrow  \bigoplus_{e \in {\sigma}_1 } \Cohomol^{i-1}(G_e)   \label{eqn:TreeLES}
          \drarrow \Cohomol^i (G) \brarrow \bigoplus_{v \in {\sigma}_0 } \Cohomol^i (G_v)  
          \ararrow  \bigoplus_{e \in {\sigma}_1 } \Cohomol^i (G_e) \drarrow \cdots    
\end{equation}
The direct sum is over one edge and two vertices if $G$ is an amalgamated product
(resulting in a Mayer--Vietoris sequence),
 and over one edge and one vertex if $G$ is an HNN extension.
\end{thm}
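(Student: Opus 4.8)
The plan is to realize $G$ as acting on its Bass--Serre tree $T$ and to feed the resulting action into the equivariant spectral sequence of Section~\ref{sec:ss}. By Bass--Serre theory, an amalgamated product $G_1 \ast_H G_2$ acts on a tree whose fundamental domain is a single edge with its two vertices carrying the stabilizers $G_1, G_2$ and the edge carrying the stabilizer $H$, while an HNN extension $G_1 \ast_{G_2}$ acts on a tree whose fundamental domain is a single vertex (stabilizer $G_1$) together with one edge (stabilizer $G_2$) closing up to a loop in the quotient. In both cases $T$ is a $1$--dimensional contractible complex, so the equivariant cohomology coincides with $\Cohomol^*(G)$, and the equivariant spectral sequence has $E_1^{i,j} \cong \bigoplus_{\sigma \in {}_G\backslash T^{(i)}} \Cohomol^j(G_\sigma)$ converging to $\Cohomol^{i+j}(G)$.

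First I would record that, because $\dim T = 1$, this spectral sequence is concentrated in the two columns $i = 0$ and $i = 1$, with
\[
E_1^{0,j} \cong \bigoplus_{v \in \sigma_0} \Cohomol^j(G_v), \qquad E_1^{1,j} \cong \bigoplus_{e \in \sigma_1} \Cohomol^j(G_e),
\]
the direct sums being finite and ranging over the vertices and edges of the fundamental domain (two vertices and one edge in the amalgamated case, one of each in the HNN case). By property~(\ref{ss3}), the differential $d_1^{0,j}$ is the difference of the two restriction maps $\bigoplus_v \Cohomol^j(G_v) \to \bigoplus_e \Cohomol^j(G_e)$, which becomes the map $\alpha$ in the statement.

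A two--column spectral sequence collapses at $E_2$, since for $r \geq 2$ the differential $d_r$ lands in or emanates from a zero column. Hence the filtration of $\Cohomol^n(G)$ yields a short exact sequence $0 \to E_2^{1,n-1} \to \Cohomol^n(G) \to E_2^{0,n} \to 0$ for every $n$, where $E_2^{0,n} = \kernel d_1^{0,n}$ and $E_2^{1,n-1} = \operatorname{coker} d_1^{0,n-1}$. Splicing these across all $n$, and identifying $\beta$ with the edge homomorphism $\Cohomol^n(G) \twoheadrightarrow E_2^{0,n} \hookrightarrow \bigoplus_v \Cohomol^n(G_v)$ (that is, restriction to the vertex stabilizers) and $\delta$ with the connecting map $\bigoplus_e \Cohomol^{n-1}(G_e) \twoheadrightarrow E_2^{1,n-1} \hookrightarrow \Cohomol^n(G)$, produces exactly the long exact sequence~(\ref{eqn:TreeLES}); exactness at each term is immediate from the descriptions of $E_2^{0,\ast}$ and $E_2^{1,\ast}$ as the kernel and cokernel of $\alpha = d_1$.

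The content here is essentially bookkeeping rather than a genuine obstacle, as the result is the classical Mayer--Vietoris sequence for amalgamated products (and its HNN analogue) derived from the action on the Bass--Serre tree, as in~\cite{Brown}. The one point that requires care is the precise identification of the maps: that $d_1$ is literally the difference of restrictions, so that the amalgamated case specializes to the usual Mayer--Vietoris sequence, and that $\delta$ is the edge--to--abutment map coming from the filtration. Working mod~$2$ makes these identifications clean, since orientation signs on the edges of $T$ disappear. An equivalent and even more self--contained route, which I would also mention, is to apply $\Cohomol^*(G;-)$ directly to the short exact sequence of coefficient modules $0 \to \ef \to C^0(T) \to C^1(T) \to 0$ arising from the connectedness and acyclicity of the tree, decompose $C^i(T) \cong \bigoplus_{\bar\sigma} {\rm Coind}^G_{G_{\bar\sigma}} \ef$ over orbit representatives of cells, and invoke Shapiro's Lemma; the resulting long exact sequence in $\Cohomol^*(G;-)$ is precisely~(\ref{eqn:TreeLES}).
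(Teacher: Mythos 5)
Your proposal is correct and takes essentially the same approach as the paper: the paper states this theorem without proof as a classical consequence of Bass--Serre theory, and its only justification is the remark (following Proposition~\ref{map-structures}) that the two-column equivariant spectral sequence for the action on the Bass--Serre tree degenerates into the Wang sequence with $\alpha = d_1$ --- precisely the argument you write out in full. Both your splicing of the $E_2$-page short exact sequences and your alternative route via $0 \to \ef \to C^0(T) \to C^1(T) \to 0$ with Shapiro's Lemma are valid completions of that sketch.
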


The following is another result from Bass--Serre theory.

\begin{prop}
\label{map-structures}
In the long exact sequence of Theorem~\ref{thm:quotientLES}, the map $\beta$ is the restriction map.  In an amalgamated product, $\alpha = res^{G_1}_H - 
res^{G_2}_H$.  
In an HNN extension, $\alpha = res^{G_1}_{G_2} - \varphi^*$, 
where~$\varphi^*$ is the map induced by conjugating $G_2$ by the free letter.
\end{prop}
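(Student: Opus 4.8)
The plan is to recognize Sequence~(\ref{eqn:TreeLES}) of Theorem~\ref{thm:quotientLES} as the long exact sequence attached to the two-column equivariant spectral sequence for the action of $G$ on its Bass--Serre tree $T$, and then to read the maps $\alpha$ and $\beta$ directly off the $E_1$-page. Since $T$ is a tree it is contractible, so $\Cohomol^*_G(T) \cong \Cohomol^*(G)$; and since $T$ is $1$-dimensional, the equivariant spectral sequence of Section~\ref{sec:ss} is concentrated in the columns $p=0$ and $p=1$, with
\[
E_1^{0,q} \cong \bigoplus_{v \in \sigma_0} \Cohomol^q(G_v)
\quad\text{and}\quad
E_1^{1,q} \cong \bigoplus_{e \in \sigma_1} \Cohomol^q(G_e).
\]
A spectral sequence supported in two adjacent columns collapses after $E_2$, and splicing the short exact sequences $0 \to E_2^{1,i-1} \to \Cohomol^i(G) \to E_2^{0,i} \to 0$ (with $E_2^{0,i} = \kernel d_1$ and $E_2^{1,i-1} = \cokernel d_1$) produces exactly Sequence~(\ref{eqn:TreeLES}). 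Under this identification $\alpha = d_1$, the map $\beta$ is the edge map $\Cohomol^i(G) \to E_2^{0,i} \hookrightarrow E_1^{0,i}$, and $\drarrow$ is the connecting homomorphism.

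Identifying $\beta$ is the short half of the argument. The edge map landing in $E_1^{0,i} \cong \bigoplus_v \Cohomol^i(G_v)$ is, after Shapiro's Lemma, precisely the restriction of a class in $\Cohomol^i(G)$ to the vertex stabilizers, which is the assertion. All the content is then in identifying $\alpha = d_1$. By Property~(\ref{ss3}), the differential $d_1$ is a difference of restriction maps, so I would work with the cellular coboundary $C^0({}_G\backslash T) \to C^1({}_G\backslash T)$, which sends a vertex cochain to its oriented values on the endpoints of each edge. In the amalgamated-product case the single edge orbit $e$ joins the two distinct vertex orbits $v_1, v_2$ carrying $G_1, G_2$, and the edge stabilizer $H = G_e$ includes into each vertex stabilizer through the given maps; hence the coboundary of $(x_1,x_2)$ records $res^{G_1}_H(x_1)$ at one end and $res^{G_2}_H(x_2)$ at the other, yielding $\alpha = res^{G_1}_H - res^{G_2}_H$.

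The main obstacle is the HNN case, where the bookkeeping must be done carefully. Here the edge orbit $e$ is a loop at the single vertex orbit $v$, so both endpoints of a lift of $e$ to $T$ lie in the $G$-orbit of $v$ and are interchanged by the free letter $t$. As a result the edge stabilizer $G_2 = G_e$ embeds into $G_1 = G_v$ in two genuinely different ways: at one endpoint through the standard inclusion $G_2 \hookrightarrow G_1$, and at the other through the conjugate $t^{-1}G_2\thinspace t \hookrightarrow G_1$, that is, through $\varphi$. Following the $G$-element that carries one endpoint to the other shows that the coboundary of $x \in \Cohomol^i(G_1)$ contributes $res^{G_1}_{G_2}(x)$ from the first inclusion and $-\varphi^*(x)$ from conjugation by $t$, giving $\alpha = res^{G_1}_{G_2} - \varphi^*$. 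The delicate points are the correct identification of the two embeddings of $G_e$ into $G_v$ and the sign forced by the orientation of the loop; once these are fixed, the stated formulas follow.
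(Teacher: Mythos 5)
Your proposal is correct and follows exactly the route the paper itself indicates: the paper states this proposition as a known fact from Bass--Serre theory and, in the paragraph immediately following it, observes that the long exact sequence is the degenerate (Wang) form of the two-column equivariant spectral sequence for the action on the Bass--Serre tree, with $\alpha$ equal to the $d_1$ differential. Your write-up simply fleshes out that identification --- including the correct bookkeeping of the two embeddings of the edge stabilizer into the vertex stabilizer (inclusion versus $\varphi$) in the HNN case --- so it matches the paper's approach.
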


In fact, when the equivariant spectral  sequence only has two non-zero columns,
the $E_1$ page degenerates into a Wang sequence which is precisely the long exact sequence~(\ref{eqn:TreeLES})
in Theorem~\ref{thm:quotientLES}\thinspace{}.  Furthermore, the map $\alpha$ is the $d_1$ differential.
We note that although we are primarily interested in an additive calculation of the cohomology, the long exact sequence can also be used to determine ring information,
as $\ker(\alpha) = \im(\beta)$ and this is a ring isomorphism.

We will use the equivariant spectral sequence to carry out our cohomology calculations.  
We recall that Remark~\ref{rem:separatepolyext} states that concerning the occurring restriction maps on cohomology, 
only reduced classes restrict non-trivially to reduced classes. 

\begin{lemo} \label{lem:lemOne}
Let $G \cong \Z/4 \ast_{\Z/4}$.  Then
$\dim_{\ef} \Cohomol^q(G) = $\scriptsize$
  \begin{cases}
    1,          &    q = 0; \\
    2,          &    q \geq 1. \\ 
  \end{cases}
$
\end{lemo}

\begin{proof}
The quotient of the tree acted on by $G$ has shape $\circlegraph$,
with vertex and edge stabilizers both isomorphic to $\Z/4$.  Using the restriction maps from Proposition~\ref {prop:restriction} we set up the $E_1$ and $E_2$ pages of the equivariant spectral sequence for this HNN extension below.  
\vspace{.5em} \\
\begin{center}
\begin{tabular}{ccc}

$E_1$ page &  & $E_2$ page \\
\small 
$
\begin{array}{l | clcl}
4 & \ef &  \longrightarrow & \ef  \\
3 & \ef &  \longrightarrow & \ef  \\
2 & \ef &  \longrightarrow & \ef  \\
1 & \ef &  \longrightarrow & \ef  \\
0 & \ef &  \longrightarrow & \ef  \\
\hline &  0 & &  1
\end{array}
$
& $\buildrel\ {\Homol_*(d_1)} \over \Longrightarrow$ &
\small
$
\begin{array}{l | clcl}
4 & \ef &  & \ef  \\
3 & \ef                & & \ef  \\
2 & \ef &   & \ef  \\
1 & \ef                &   & \ef  \\
0 & \ef &   &  \ef \\
\hline &  0 & &  1
\end{array}
$
\end{tabular}\normalsize
\end{center}

The arrows in the spectral sequence are the $d_1$ differentials given by 
$d_1 = res^{\Z/4}_{\Z/4} - \varphi^*$.  
Since both maps $res^{\Z/4}_{\Z/4}$ and $\varphi^*$
have identical effects on $\Cohomol^*(\Z/4$), we obtain $d_1 = 0$.  
So the $E_1$ page of the spectral sequence is the same as the $E_\infty$ page.
\end{proof}


\begin{lemiota} \label{lem:iota} Let $G \cong \Te \ast_{\Z/4} \Te$.   Then
$ \dim_{\ef}\Homol^q(G) = $\scriptsize$
  \begin{cases}
       1,          &    q = 0;\\
       2,          &    q = 4k, \ k \geq 1;  \\ 
       0,          &    q =4k+1;  \\
       1,          &    q = 4k+2; \\
       3,          &    q=4k+3.
  \end{cases}
$
\end{lemiota}

\begin{proof}
The quotient of the tree acted on by $G$ has shape $\edgegraph$.  Both vertex stabilizers are isomorphic to $\Te$, and the edge stabilizer is isomorphic to $\Z/4$. Propositions~\ref{prop:SLfincoh} and~\ref {prop:restriction} allow us to build the $E_1$ and $E_2$ pages of the equivariant spectral sequence for this amalgamated product.  Both pages are below.
\vspace{.5em} \\
\begin{center}
\begin{tabular}{ccc}

$E_1$ page &  & $E_2$ page \\
\small 
$
\begin{array}{l | clcl}
6 &  0             &  \longrightarrow & \ef  \\
5 &  0             &  \longrightarrow & \ef  \\
4 & (\ef)^2 &  \longrightarrow & \ef  \\
3 & (\ef)^2 &  \longrightarrow & \ef  \\
2 &  0            &  \longrightarrow & \ef  \\
1 &  0            &  \longrightarrow & \ef  \\
0 & (\ef)^2 &  \longrightarrow & \ef  \\
\hline &  0 & &  1
\end{array}
$
& $\buildrel\ {\Homol_*(d_1)} \over \Longrightarrow$ & \small
$
\begin{array}{l | clcl}
6 &                          &  &  \ef  \\
5 &                          &  &  \ef  \\
4 & \ef                    &  &           \\
3 & (\ef)^2             &  & \ef  \\
2 &                          &  & \ef  \\
1 &                          &  & \ef  \\
0 & \ef    &  &   \\
\hline &  0 & &  1
\end{array}
$
\end{tabular}\normalsize
\end{center}
On the $E_1$ page, the only non-zero restriction maps are the ones on reduced classes in rows $4k$.
\end{proof}


We consider the group $G \cong \left( \left( \Q \ast_{\Z/4} \Q \right) \ast_{\Z/4} \right) \ast_{\Z/4}$, constructed as follows.
The copies of~$\Q$ are amalgamated over a copy of $\Z/4$.  
The group $G$ is formed from $\Q \ast_{\Z/4} \Q$ via an iterated HNN extension.  
The first HNN extension takes a second $\Z/4$ subgroup, non-conjugate to the first, in one copy of $\Q$ to a non-conjugate $\Z/4$ subgroup in the other.  
The second HNN extension is defined similarly using the third non-conjugate $\Z/4$ subgroups.  
\begin{lemtheta} \label{lem:theta}
Let $G \cong \left( \left( \Q \ast_{\Z/4} \Q \right) \ast_{\Z/4} \right) \ast_{\Z/4}$ with HNN extensions as specified above.  
Then 
\begin{center}
$ \dim_{\ef}\Homol^q(G) = $\scriptsize$
  \begin{cases}
    1,          &    q = 0;\\
    4,          &    q = 4k, \   k \geq 1;  \\ 
    4,          &    q =4k+1;  \\
    5,          &    q = 4k+2; \\
    5,          &    q=4k+3.
  \end{cases}
$
\end{center}
\end{lemtheta}

\begin{proof}
The quotient of the tree acted on by $G$ has shape $\graphFive$.  
The two vertex stabilizers are isomorphic to $\Q$, and the three non-conjugate edge stabilizers are all isomorphic to $\Z/4$. 

The $E_1$ page of the spectral sequence has two copies of $\Cohomol^*(\Q)$ in column $0$ 
and three copies of $\Cohomol^*(\Z/4)$ in column $1$.  
Our main concern is to determine the action of the $d_1$ differential from 
$E_1^{0,1}$ to $E_1^{1,1}$.  
By Proposition~\ref {prop:restriction}, we know that in dimension $1$, 
a non-zero restriction $res^{\Q}_{\Z/4}$  can be traced back to a non-zero restriction 
$res^{\Dtwo}_{\Z/2}$.  
In cohomology, let $b_{11}, b_{12}$, and $b_{13}$ denote the exterior classes in $\oplus_3 \Cohomol^1(\Z/4)$ 
corresponding to the three edge stabilizers,
 and let $x_{11}, y_{11}, x_{12}, y_{12}$ be the nilpotent classes in $\Cohomol^1(\Q) \oplus \Cohomol^1(\Q)$
 corresponding to the two vertex stabilizers.
  Further, assume that $x_{11}$ and $x_{12}$ both restrict isomorphically to $b_{11}$
 via the correspondence from Proposition~\ref {prop:restriction}, and that $y_{11}$ and $y_{12}$ both restrict isomorphically to $b_{12}$. 
 In $\Dtwo$, the product of any two non-zero elements is the final non-zero element. 
 Consequently, in cohomology, $x_{11}, y_{11}, x_{12}$ and $y_{12}$ all restrict to $b_{13}$.  That is, 
\[
\begin{array}{lcl}
d_1^{0,1}(x_{11}) & = & b_{11} + b_{13}  \\
d_1^{0,1}(y_{11}) & = & b_{12} + b_{13}  \\
d_1^{0,1}(x_{12}) & = & b_{11} + b_{13}  \\
d_1^{0,1}(y_{12}) & = & b_{12} + b_{13}.  
\end{array}
\]
So $d_1^{0,1}$ has rank $2$.  The only other non-zero differential is between polynomial classes in rows $4k$.   Let $e_{41}$ and $e_{42}$ be the polynomial 
generators of $\Cohomol^4(\Q) \oplus \Cohomol^4(\Q)$, and let $e_{21}^2$, $e_{22}^2$, $e_{23}^2$ be the squares of the two-dimensional polynomial generators in $\oplus_3 \Cohomol^4(\Z/4)$.  
Based on the geometry of the quotient of the tree acted on by $G$, we have
\[ 
  d_1(e_{41}) = d_1(e_{42})= e_{21}^2 + e_{22}^2 + e_{23}^2.  
\]
This implies that both the kernel and image of $d_1$ are $1$--dimensional.  We can now completely determine the $E_1$ and $E_2$ pages of the equivariant spectral sequence.
\vspace{.5em} \\
\begin{center}
\begin{tabular}{ccc}

$E_1$ page &  & $E_2$ page \\
\small 
$
\begin{array}{l | clcl}
6 & (\ef)^4  &  \longrightarrow & (\ef)^3  \\
5 & (\ef)^4  &  \longrightarrow & (\ef)^3  \\
4 & (\ef)^2  &  \longrightarrow & (\ef)^3  \\
3 & (\ef)^2  &  \longrightarrow & (\ef)^3  \\
2 & (\ef)^4  &  \longrightarrow & (\ef)^3  \\
1 & (\ef)^4  &  \longrightarrow & (\ef)^3  \\
0 & (\ef)^2  &  \longrightarrow & (\ef)^3  \\
\hline &  0 & &  1
\end{array}
$
& $\buildrel\ {\Homol_*(d_1)} \over \Longrightarrow$ &\small
$
\begin{array}{l | clcl}
6 & (\ef)^4           &  &  (\ef)^3    \\
5 & (\ef)^2           &  &  \ef           \\
4 & \ef                  &  &  (\ef)^2    \\
3 & (\ef)^2           &  &  (\ef)^3    \\
2 & (\ef)^4           &  &  (\ef)^3    \\
1 & (\ef)^2           &  &  \ef           \\
0 & \ef                  &  &  (\ef)^2    \\
\hline &  0 & &  1
\end{array}
$
\end{tabular}\normalsize
\end{center}

\end{proof}


\vbox{
\begin{lemrho} \label{lem:rho}
Let $G \cong (\Q \ast_{\Z/4}) \ast_{\Z/4} \Te$, where the HNN extension identifies two non-conjugate copies of $\Z/4$ in $\Q$; 
and the third conjugacy class of $\Z/4$ in $\Q$ is the amalgamated subgroup with $\Te$.  
Then 
\begin{center}
$ \dim_{\ef}\Homol^q(G) = $\scriptsize$
  \begin{cases}
    1                    &    q = 0 \\
    3,                  &    q=  4k, \  \geq 1; \\
    2,                  &    q = 4k +1;\\
    3,                  &    q =4k+ 2;  \\
    4,                 &    q = 4k+3. \\
  \end{cases}
$
\end{center}
\end{lemrho}
}

\begin{proof}
The quotient of the tree acted on by $G$ has shape $\graphTwo$.  The vertex stabilizer incident to the loop is isomorphic to $\Q$.  The other vertex stabilizer is isomorphic to $\Te$, and the two edge stabilizers are isomorphic to $\Z/4$. 
This is the most complicated case, since the fundamental domain for $G$ consists of two vertices and two edges, and the restriction maps arising from the edges have to be taken into account.  

We follow the approach used for a similar mod $2$ calculation in Lemma 3.1 of~\cite{BerkoveMod2}. 
We start with an analysis of the component of $d_1: \Cohomol^*(\Q) \rightarrow \Cohomol^*(\Z/4)$,
where the copy of $\Z/4$ is the stabilizer of the loop.
 Referring back to the long exact sequence given in Equation~\ref{eqn:TreeLES}, 
this component of $d_1$  is the difference  $res^{\Q}_{\Z/4} - \varphi^*$, 
where the restriction map is induced by subgroup injection on one side of the loop,
and by the twisting from the HNN extension on the other one.   
We tweak the notation for classes from Proposition~\ref{prop:SLfincoh} by denoting by $b_{11} \in \Cohomol^1(\Z/4)$ 
the class in the loop stabilizer and $b_{12} \in \Cohomol^1(\Z/4)$ the class from the edge stabilizer.  
(We use the same convention for reduced classes.)   
By the mod $2$ calculation in~\cite{BerkoveMod2},
which carefully tracks the effect of the twisting map on cohomology, 
one can show that the generators $x_1$ and $y_1$ mentioned in Proposition~\ref{prop:SLfincoh} have images
\[
res^{\Q}_{\Z/4}(x_1) = b_{11}, \text{ but } res^{\Q}_{\Z/4}(y_1) =  0.
\] 
That is, the restriction only detects the subgroup on one side of the HNN extension.  
The map on cohomology induced by $\varphi$, on the other hand, detects the subgroup on both sides:
\[
\varphi^*(x_1) = \varphi^*(y_1) = b_{11}.
\]
Consequently, the component of $d_1$ mapping to the loop sends $x_1$ to $0$ and $y_1$ to $b_{11}$.  
In addition, as $res^{\Q}_{\Z/4}(e_4) =   \varphi^*(e_4) = e_{21}^2$,
the same component of $d_1$ sends $e_4$ to $0$.  
The map $res^{\Q}_{\Z/4}$ to the unlooped edge with $\Z/4$ stabilizer is given by Proposition~\ref {prop:restriction},
 although we need to determine whether the nilpotent class with a nontrivial restriction is $x_1$, $y_1$, or $x_1 + y_1$. 
 In fact, a basis for $\Cohomol^*(\Q)$ can be chosen so that $res^{\Q}_{\Z/4}$ sends $x_1$ to $0$ and $y_1$ to $b_{12}$.  Summarizing, 
\vspace{.5em}
\[
  \begin{array}{lll} 
     res^{\Q}_{\Z/4 \oplus \Z/4} (y_1) &  =&   b_{11} + b_{12} \\
     res^{\Q}_{\Z/4 \oplus \Z/4} (e_4) &  = &  e_{22}^2
  \end{array}
\]
and all other classes are sent to $0$.  The rest of $d_1$ is given by the restriction map $res^{\Te}_{\Z/4}$, 
and this map is nontrivial only on the reduced class.  We have now completely determined the $d_1$ differential, so we can write down the $E_1$ and $E_2$ pages.  
\vspace{.5em} \\
\begin{center}
\begin{tabular}{ccc}

$E_1$ page &  & $E_2$ page \\
\small 
$
\begin{array}{l | clcl}
6 & (\ef)^2  &  \longrightarrow & (\ef)^2  \\
5 & (\ef)^2  &  \longrightarrow & (\ef)^2   \\
4 & (\ef)^2  &  \longrightarrow & (\ef)^2   \\
3 & (\ef)^2  &  \longrightarrow & (\ef)^2   \\
2 & (\ef)^2  &  \longrightarrow & (\ef)^2   \\
1 & (\ef)^2  &  \longrightarrow & (\ef)^2   \\
0 & (\ef)^2  &  \longrightarrow & (\ef)^2   \\
\hline &  0 & &  1
\end{array}
$
& $\buildrel\ {\Homol_*(d_1)} \over \Longrightarrow$ &
\small
$
\begin{array}{l | clcl}
6 & (\ef)^2           &  &  (\ef)^2   \\
5 & \ef                  &  &   \ef          \\
4 & \ef                  &  &  \ef           \\
3 & (\ef)^2           &  &  (\ef)^2   \\
2 & (\ef)^2           &  &  (\ef)^2   \\
1 &  \ef                 &  &   \ef         \\
0 & \ef                  &  &  \ef          \\
\hline &  0             & &  1
\end{array}
$
\end{tabular}
\end{center}
\normalsize
\end{proof}

\begin{note} \label{sub-sub}
As the sub-subcomplex $\xsp$ is $0$-dimensional, we can read off the equivariant cohomology $\Cohomol^\Gamma_q(\xsp)$
as the $E_1^{0,q}$-terms of its spectral sequence. 
This furthermore splits as a direct sum over the connected components of $X_s$,
and we write $\edgegraph '$,   $\graphFive '$, respectively $\graphTwo '$ for subsets of $\xsp$
with orbit space constituting the vertices of a connected component 
$\edgegraph$,   $\graphFive$, respectively $\graphTwo$ of $_\Gamma \backslash X_s$.
Then using Proposition~\ref{prop:SLfincoh}, we obtain on the known types of connected components
\begin{center}
 $\Cohomol^\Gamma_2(\edgegraph ') = 0$,    $\Cohomol^\Gamma_2(\graphFive ') \cong (\F_2)^4$,
  $\Cohomol^\Gamma_2(\graphTwo ') \cong (\F_2)^2$.
\end{center}
The set $\circlegraph '$ is empty, but we can abuse notation and also write $\Cohomol^\Gamma_2(\circlegraph ') = 0$.
\end{note}


\section{Decomposition of the second page differential} \label{The second page differential on the components of the torsion subcomplex}

Looking back at the four groups associated to non-central $2$--torsion subcomplex quotient components types shown in Table~\ref{table:subcomplexes}, 
we see that the groups associated to the quotient types 
$\edgegraph$, $\graphFive$, and $\graphTwo$ have periodic cohomology of period $4$ starting above degree $d = 0$,
 where $\alpha$ is the unique polynomial class in degree $4$ which is detected on the fiber of the extension.
In contrast, for the quotient type $\circlegraph$, the periodic cohomology has period $2$,
where the periodicity generator is again the unique polynomial class in dimension $2$.

\textit{In the equivariant spectral sequence for the action of $\SLtwo(\ringOm)$ on $X$, which we will denote by} E3SL, we have completely determined the $d_1$ differentials.  The final goal is to determine as much as possible about the $d_2$ differentials.  By periodicity, once we know what happens in the first four rows,  we know what happens in the entire spectral sequence. Our main technique involves an analysis of Steenrod operations in the E3SL.  The target of the $d_2$ differential is the second column of the E3SL, where all $2$-cell stabilizers are of type $\Z/2$.  We note that the Steenrod algebra on $\Cohomol^*(\Z/2)$ is generated by the operation $Sq^1(x_1) = x_1^2$.  

\vbox{
\begin{lemma} \label{Lem:EvenDegreeTrivial}
All classes in $E_2^{0,2q}$ of the {\rm E3SL} are $d_2$-cocycles.   
\end{lemma}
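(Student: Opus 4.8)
The plan is to show that the $d_2$ differential vanishes on the vertical edge $E_2^{0,2q}$ by exploiting the structure of the even-degree cohomology of the finite stabilizer groups together with the compatibility of $d_2$ with Steenrod operations. First I would recall that, by Proposition~\ref{prop:SLfincoh}, every class in $E_2^{0,2q}(X)$ restricts to the vertex stabilizers, each of which has cohomology of the form $\ef[e_4](\cdots)$ or $\ef[e_2](b_1)$; in even degree $2q$ these modules are concentrated on reduced polynomial classes of the form $e_4^k$ (possibly multiplied by a low-dimensional nilpotent class in the $\Q$ case) together with products of nilpotent generators. The essential observation is that the reduced even-degree classes on the vertical edge are squares or, more precisely, lie in the image of $Sq$ from lower-degree classes on the same edge.

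The key step is to express an arbitrary class $u \in E_2^{0,2q}$ as a sum of classes of the form $Sq^j(w)$ with $w \in E_2^{0,2q-j}$ and $0 \le j \le 2q-j$, so that the Steenrod-commutation property (fact (2) in Section~\ref{sec:ss}, valid when $j \le q-1$) gives $d_2(Sq^j w) = Sq^j(d_2 w)$. Then I would argue that $d_2 w$ already vanishes, either by a degree/periodicity induction or because $d_2 w$ lands in $E_2^{2,\ast}(X)$, whose relevant low-degree part is reduced (being built from $\Z/2$ edge-stabilizer cohomology as in Theorem~\ref{thm:E2page}) and receives no nilpotent image, forcing the application of $Sq^j$ to annihilate it. For the polynomial periodicity generator itself, I would invoke Proposition~\ref{prop:Bianchiperiodicity}: the degree-$4$ class $\alpha$ is a permanent cycle detected on the fiber, so $d_2(\alpha) = 0$, and by the Leibniz rule (Remark~\ref{rem:derivation}) $d_2(\alpha^k u) = \alpha^k d_2(u)$, which reduces the whole computation to degrees $2q \le 4$.

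Concretely, I would carry out the reduction in the order: (i) handle the base cases $q = 0, 1, 2$ directly on the four component types using the explicit $E_2$ pages computed in the ($o$), ($\iota$), ($\theta$), ($\rho$) Lemmas, checking that the even reduced generators $e_2$, $e_2^2$, $e_4$ are hit by $Sq^1$ or $Sq^2$ from classes whose $d_2$-image is already known to vanish; (ii) promote to all higher even degrees by multiplying with the periodicity class $\alpha$ and applying the Leibniz rule; and (iii) assemble the component-wise statements via the splitting of $E_2^{p,q}(X_s)$ from Lemma~\ref{splitting over components}.

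The main obstacle I anticipate is the $\Q$-stabilized case: in $\Cohomol^*(\Q) \cong \ef[e_4](x_1,y_1,x_2,y_2,x_3)$ the even-degree reduced classes interact with a rich nilpotent ideal, so I must confirm that the relevant even classes on the vertical edge are genuinely in the image of an admissible Steenrod operation $Sq^j$ with $j \le q-1$ (so that the commutation property applies rather than the transgression, which would not directly give vanishing). Verifying that the constraint $j \le q - 1$ is met — rather than the boundary case $j = q$ where $Sq^q$ is the squaring map and the commutation theorem fails — is the delicate point, and I would resolve it by writing each even generator as $Sq^j$ of a \emph{strictly} lower-degree class and never as the square of a class in degree exactly $q$.
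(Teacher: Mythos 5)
Your strategy --- writing every class of $E_2^{0,2q}$ as a sum of Steenrod images $Sq^j(w)$ of strictly lower-degree classes and then pushing $d_2$ through the commutation theorem --- cannot be carried out, and the issue you flag at the end as ``the delicate point'' is not something to be verified but a genuine impossibility. After your (legitimate) reduction by the periodicity class $\alpha$ and the Leibniz rule, the base cases that remain are precisely the periodicity generators and the degree-$2$ classes, and these admit no such preimages. In $\Cohomol^*(\Z/4)\cong\ef[e_2](b_1)$ the only candidate is $Sq^1 b_1 = b_1^2 = 0$, so $e_2$ (which, after factoring out $\alpha^k$, spans $E_2^{0,4k+2}$ on components of type $(o)$) is not a Steenrod image. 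In $\Cohomol^*(\Q)$ every class of degree at most $3$ lies in the subring generated by $x_1,y_1$; by the Cartan formula this subring is stable under all $Sq^j$, and its degree-$4$ part vanishes (since $x_1^3=y_1^3=x_1^2y_1^2=0$), so $e_4$ is not hit. In $\Cohomol^*(\Te)\cong\ef[e_4](b_3)$ one has $\Cohomol^1=\Cohomol^2=0$, and $Sq^1 b_3=0$ because $b_3$ is inflated from $w_3\in\Cohomol^3(\Af)$ and the image of inflation in degree $4$ is zero (only $1$ and $w_3$ survive on the horizontal edge in the proof of Proposition~\ref{prop:SLfincoh}); so again $e_4$ is not hit. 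There is a second gap: even where an even class \emph{is} a Steenrod image inside the stabilizer cohomology, the commutation theorem applies on the $E_2$ page only if the preimage is itself a $d_1$-cocycle. On a component of type $(\theta)$ the image of $Sq^1\colon E_2^{0,1}\to E_2^{0,2}$ is only the span of $x_{11}^2+x_{12}^2$ and $y_{11}^2+y_{12}^2$ inside the $4$-dimensional $E_2^{0,2}$, and on type $(\rho)$ the class $y_1^2$ has no preimage because $y_1$ is not a $d_1$-cocycle. Finally, your fallback principle --- that $d_2 w$ must vanish because the target column is reduced and ``receives no nilpotent image'' --- is false: the paper's own data (Section~\ref{The second page differential} and Appendix~\ref{Numerical results}) show that $d_2^{0,1}$, whose source consists of nilpotent classes and whose target is reduced, is frequently nonzero.

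The paper's proof runs the Steenrod argument in exactly the opposite direction, which is why it needs no preimages at all. One first checks that $Sq^1$ \emph{vanishes} on the source $E_2^{0,2q}$: by Proposition~\ref{prop:SLfincoh} the even-degree classes of all vertex stabilizers are squares or reductions of integral classes, the only case requiring care being $\Cohomol^2(\Q)$, whose classes arise from squares on the horizontal edge of the Lyndon--Hochschild--Serre spectral sequence of $1\to\Z/2\to\Q\to\Dtwo\to 1$. On the other hand, a nonzero value of $d_2$ would be an odd-dimensional class in the second column, built from $\Cohomol^*(\Z/2)$ of the $2$-cell stabilizers, where $Sq^1 x_1^{2q-1}=x_1^{2q}\neq 0$. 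Then $Sq^1 d_2 u = d_2 Sq^1 u = 0$ rules such a value out. Your $\alpha$-periodicity framework is sound (granting that the image of $\alpha$ in $E_2^{0,4}$ is a $d_2$-cocycle, by naturality of the spectral sequence for the $\Gamma$-map $X\to\mathrm{pt}$), but the base cases it leaves must be settled by this vanishing-on-the-source argument, not by exhibiting Steenrod preimages, which do not exist.
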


\begin{proof}
From Proposition~\ref{prop:SLfincoh}, we see that $Sq^1$ in the vertical edge of the E3SL is trivial in even degrees.  
The only classes that might need to be checked are in $\Cohomol^2(\Q)$, and the result follows since these classes arise from squares on the horizontal edge of the Lyndon--Hochschild--Serre spectral sequence associated to the extension  
\[
  1 \rightarrow  \Z/2 \rightarrow \Q \rightarrow \Dtwo \rightarrow 1.
\]

Going back to the E3SL, a non-zero target of the $d_2$ differential would be an odd-dimensional class in $\Cohomol^*(\Z/2)$ in the second column which has a non-trivial $Sq^1$.  However, since $Sq^1 d_2 = d_2 Sq^1$ this is impossible.   
\end{proof}
}

As our cell complex $X$ is 2-dimensional, we obtain the following corollary from this lemma and Corollary~\ref{cor:E2page}.

\begin{corollary}\label{cor:polnvanish}
In the equivariant spectral sequence associated to  $\SLtwo(\ringOm)$, the $d_2$ differential can only be nontrivial on $ E_2^{0,2q+1}(X)  \cong  E_2^{0,2q+1}(X_s)$  for $q \geq 0$.
\end{corollary}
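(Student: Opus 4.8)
The plan is to combine the bidegree constraint on $d_2$ coming from the $2$-dimensionality of $X$ with the vanishing result of Lemma~\ref{Lem:EvenDegreeTrivial}. First I would recall that in a cohomological spectral sequence the second-page differential has bidegree $(2,-1)$, so $d_2 \colon E_2^{p,q} \to E_2^{p+2,q-1}$. By Corollary~\ref{cor:E2page}, since $X$ is $2$-dimensional the $E_2$ page is concentrated in the columns $p \in \{0,1,2\}$. For the source and the target of $d_2$ both to be nonzero we would need $p \in \{0,1,2\}$ and $p+2 \in \{0,1,2\}$ simultaneously, which forces $p = 0$. Hence $d_2$ can be nontrivial only on the vertical edge $E_2^{0,q}(X)$.

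Next I would invoke Lemma~\ref{Lem:EvenDegreeTrivial}, which asserts that every class in $E_2^{0,2q}$ of the E3SL is a $d_2$-cocycle. This eliminates all even vertical degrees, so the only place where $d_2$ can act nontrivially is $E_2^{0,2q+1}$ for $q \geq 0$.

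Finally, to justify the stated identification I would read off from Theorem~\ref{thm:E2page} that in odd vertical degree one has $E_2^{p,q}(X) \cong E_2^{p,q}(X_s) \oplus E_2^{p,q}(X,X_s)$, and from Proposition~\ref{prop:relativeXs} that the relative summand $E_2^{p,q}(X,X_s)$ vanishes when $p = 0$. Combining these in column $p = 0$ gives $E_2^{0,2q+1}(X) \cong E_2^{0,2q+1}(X_s)$, which is exactly the isomorphism appearing in the statement.

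I do not expect any serious obstacle here: the argument is purely a matter of bookkeeping bidegrees against the column support established in Corollary~\ref{cor:E2page}, together with citing the two prior results. The only point requiring genuine care is keeping the cohomological convention straight, so that the bidegree $(2,-1)$ of $d_2$ is applied correctly and the column constraint indeed pins the source down to $p = 0$ rather than some other column.
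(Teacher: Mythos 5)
Your proposal is correct and follows essentially the same route as the paper: the paper likewise deduces the corollary from the $2$-dimensionality of $X$ (column concentration in Corollary~\ref{cor:E2page}, forcing the source of $d_2$ into column $0$) together with Lemma~\ref{Lem:EvenDegreeTrivial} killing the even rows, with the identification $E_2^{0,2q+1}(X)\cong E_2^{0,2q+1}(X_s)$ coming from Theorem~\ref{thm:E2page} and the vanishing of $E_2^{0,q}(X,X_s)$ in Proposition~\ref{prop:relativeXs}. Your bookkeeping of the bidegree $(2,-1)$ and the column constraint is exactly the intended argument.
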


We note that the $d_2^{0,q}$ 
differentials are block-diagonalizable with matrix blocks supported each by one connected component
of the quotient of the non-central $2$--torsion subcomplex.  
Working one component type at a time, we next show that in the equivariant spectral sequence, 
for components of type $\circlegraph$,  $d_2^{0,1}$ is trivial if and only if $d_2^{0,3}$ is.  
We will also show that the $d_2$ differential is trivial on components of type 
$\edgegraph$ and $\graphFive$, and $\graphTwo$ in degrees $q \equiv 3 \bmod 4$.   
This implies a vanishing result on components of type $\edgegraph$.

\begin{lemo} \label{lem:CircleExt}
The $d_2$ differential is nontrivial on cohomology on components of type $\circlegraph$ in degrees $q \equiv 1 \bmod 4$
if and only if it is nontrivial on these components in degrees $q \equiv 3 \bmod 4$.
\end{lemo}

\begin{proof}
The vertex stabilizers for components of type $\circlegraph$ are $\Z/4$, and $Sq^2 \left( \Cohomol^3(\Z/4) \right) \neq 0$.  Similarly, classes in $E_2^{2,2}$ are generated by $\Cohomol^2(\Z/2)$, and $Sq^2$ of these classes are also non-zero.  The result follows by applying periodicity, and noting that $d_2 Sq^2 = Sq^2 d_2$.
\end{proof}

\begin{lemiota} \label{iota-case}
The $d_2^{\rm E3SL}$ differential is trivial on connected components of type $\edgegraph$.
\end{lemiota}

\begin{proof}
By Lemma \ref{Lem:EvenDegreeTrivial}, it is sufficient to restrict ourselves to classes in odd degree.  For $d_2^{0,q}$ with $q \equiv 3 \bmod 4$, from the description of the classes from Lemma \ref{lem:iota} and the calculation of $\Cohomol^*(\Te)$ in Proposition~\ref{prop:SLfincoh}, all Steenrod squares on $b_3$ are trivial.  However, in the second column of the E3SL, $Sq^2$ is non-trivial on $E_2^{2,2}$.  By the compatibility of the Steenrod operations with the $d_2$ differential, $d_2$ must be the zero map.  
\end{proof}

\begin{lemthetarho}\label{lem:gFsquares} \label{theta-case}
The $d_2$ differential is trivial on components of types $\graphTwo$ and $\graphFive$ in dimensions $q \equiv 3 \bmod 4$.
\end{lemthetarho}

\begin{proof}
The proof is identical to that in Lemma \ref{iota-case}, since, by Proposition~\ref{prop:SLfincoh}, $Sq^2$ is trivial on $\Cohomol^3(\Q)$ as well as $\Cohomol^3(\Te)$. 
\end{proof}

\begin{remark} \label{36}
It remains to determine how $d_2$ behaves on components on types $\graphTwo$ and $\graphFive$ in degrees $q \equiv 1 \bmod 4$, which is equivalent to understanding 
$d_2^{0,1}: E_2^{0,1} \rightarrow E_2^{2,0}$.  Although the results in Section \ref{examples-section} suggest that $d_2^{0,q}$ vanishes for these components, this problem remains open for Bianchi groups in general.
\end{remark}

\section{Example calculations}\label{examples-section}

In the examples computed in this section,
 we use Corollary~\ref{cor:E2page} and the lemmata of Section~\ref{Calculation of the non-polynomial part of the cohomology}
 to determine the $E_2$~page of the equivariant spectral sequence converging to the cohomology of $\SLtwo(\ringOm)$.
Since the equivariant spectral sequence collapses at the $E_3$ page, once we understand the $d_2^{\rm E3SL}$~differential, 
we can read off $\Cohomol^*(\SLO ; \thinspace \F_2)$. 
On components of type $\edgegraph$, results in Section~\ref{The second page differential on the components of the torsion subcomplex}
show that this $d_2$~differential vanishes. 
This can also be said on components of types $\graphTwo$ and $\graphFive$, except for that we were not able to establish this for~$d_2^{0,4k+1}$
(cf. Remark~\ref{36}).
On components of type $\circlegraph$, Lemma~\ref{lem:CircleExt} shows that we need only to know the rank of~$d_2^{0,1}$, 
for which Appendix~\ref{Numerical results} provides numerical results.  
 When we cannot completely determine the rank of~$d_2$, 
or when the value of the co-rank~$c$ is unclear (see Notation~\ref{notation:c}),
we still can use Bianchi.gp~\cite{BianchiGP} and HAP~\cite{HAP} to compute 
$\Cohomol^*(\SLO ; \thinspace \F_2)$
in order to resolve this indeterminacy.

\subsection*{Example}\textbf{$(\iota)$.}  \label{iota-example}  Let $_\Gamma \backslash X_s = \edgegraph$.
Then $v=2$, $\chi(_\Gamma \backslash X_s) = 1$ and $c = 0$. 
By Lemma~\ref{iota-case}, the differential $d_2^{\rm E3SL}$ vanishes.
Then applying Corollary~\ref{cor:E2page}, Note~\ref{sub-sub} and the proof of Lemma~\ref{lem:iota},
 we obtain the following dimensions for $\Cohomol^*(\Gamma)$.
$$
\dim_{\F_2}\Cohomol^{q}(\SLO; \thinspace \F_2)
=
\begin{cases}
   \beta^{1} (_\Gamma \backslash X) +\beta^{2} (_\Gamma \backslash X), &  q = 4k+5, \\
   \beta^1(_\Gamma \backslash X) +\beta^{2} (_\Gamma \backslash X)+2, &  q = 4k+4, \\
   \beta^{1} (_\Gamma \backslash X) + \beta^{2} (_\Gamma \backslash X)+3, &  q = 4k+3, \\
   \beta^1(_\Gamma \backslash X) +\beta^{2} (_\Gamma \backslash X)+1, &  q = 4k+2, \\
   \beta^{1} (_\Gamma \backslash X) ,  &  q = 1. \\
\end{cases}
$$

The type $_\Gamma \backslash X_s = \edgegraph$ occurs for instance for the cases 
$\Gamma = \SLO$ with 
\\
$m \in \{11, 19, 43, 67, 163\}$,
where $\beta^2=\beta_2 = \beta_1 -1$ and $\beta^1=\beta_1$ is given as follows.
$$\begin{array}{l|ccccc}
m        &  11 & 19 & 43 & 67 & 163 \\
 \hline
\beta_1  & 1 & 1   & 2  &  3 & 7.   \\
\end{array}$$ 
In these cases, the results for $\dim_{\F_2}\Cohomol^{q}(\SLO; \thinspace \F_2)$
 have been checked numerically using Bianchi.gp and HAP.

\subsection*{Example}\textbf{$(\theta)$.} Let $_\Gamma \backslash X_s = \graphFive$.
Then $v=2$ and $\chi(_\Gamma \backslash X_s) = -1$. By Lemma~\ref{theta-case}, the differential $d_2^{0,3}$ vanishes.
Applying Corollary~\ref{cor:E2page}, Note~\ref{sub-sub} and the proof of Lemma~\ref{lem:theta}, 
we obtain the following dimensions for  $\Cohomol^*(\Gamma)$.
$$
\dim_{\F_2}\Cohomol^{q}(\SLO; \thinspace \F_2)
=
\begin{cases}
   c+ \beta^{1} (_\Gamma \backslash X) +\beta^{2} (_\Gamma \backslash X)+2 -\rank d_2^{0,1}, &  q = 4k+5, \\
   c +\beta^1(_\Gamma \backslash X) +\beta^{2} (_\Gamma \backslash X)+2, &  q = 4k+4, \\
   c +\beta^{1} (_\Gamma \backslash X) + \beta^{2} (_\Gamma \backslash X)+3, &  q = 4k+3, \\
   c+\beta^1(_\Gamma \backslash X) +\beta^{2} (_\Gamma \backslash X)+3 -\rank d_2^{0,1}, &  q = 4k+2, \\
   \beta^{1} (_\Gamma \backslash X)+2 -\rank d_2^{0,1},  &  q = 1. \\
\end{cases}
$$
Instances are $\Gamma = \SLO$ with $m = $
$\begin{cases}
   5.                                  & {\rm Then} \medspace \beta^1 = \beta_1 = 2 \medspace {\rm and} \medspace \beta^2 = \beta_2 = 1.\\
   10 \medspace {\rm or} \medspace 13. & {\rm Then} \medspace \beta^1 = \beta_1 = 3 \medspace {\rm and} \medspace \beta^2 = \beta_2 = 2.\\ 
   58.                                 & {\rm Then} \medspace \beta^1 = \beta_1 = 12 \medspace {\rm and} \medspace \beta^2 = \beta_2 = 11.\\                           
                              \end{cases}$
\\ \normalsize
The authors' numerical calculations yield $d_2^{0,1} = 0$,
at an also vanishing co-rank $c$ in all these four cases.

\subsection*{Example}\textbf{$(o)$.}
For the case $_\Gamma \backslash X_s = \circlegraph$, 
we observe that $v$ and $\chi(_\Gamma \backslash X_s)$ are zero.
Corollary~\ref{cor:E2page} and the proof of Lemma~\ref{lem:lemOne}
yield just two cases on the $E_2$-page: 
$E_2^{n,q}(X) \cong \Cohomol^n(_\Gamma \backslash X)+c$ for $n \in \{1, 2\}$ and $q$ odd,
$E_2^{n,q}(X) \cong \Cohomol^n(_\Gamma \backslash X)$ otherwise.

Let $\Gamma = \SLO$ with $m = 7$.
Making use of the fact that in this case the quotient space $_\Gamma \backslash X$ 
is homotopy equivalent to a M\"obius strip \cite{SchwermerVogtmann},
$\beta^1 = \beta_1 = 1$, $\beta^2 = \beta_2 = 0$ and hence $d_2 = 0$.
From the cell structure with stabilizers and identifications, 
we easily see that the co-rank $c$ vanishes.
This allows us to conclude that
$\Cohomol^{q}(\leftSL{-7}\rightSL; \thinspace \F_2)
\cong  (\F_2)^2$ for all $q \geq 1$.

Let $\Gamma = \SLO$ with $m = 15$. 
Then $\beta^1 = \beta_1 = 2$ and $\beta^2 = \beta_2 = 1$.
The numerical computation yields
$\dim_{\F_2}\Cohomol^{q}(\leftSL{-15}\rightSL; \thinspace \F_2)
=$
$
\begin{cases}
  4, &  q \geq 2, \\
  3,  &  q = 1. \\
\end{cases}
$\normalsize \\
We infer that again both $c$ and that the $d_2$--differential vanish.


Let $\Gamma = \SLO$ with $m = $
$\begin{cases}
   35. \medspace {\rm Then} \medspace \beta^1 = \beta_1 = 3 \medspace {\rm and} \medspace \beta^2 = \beta_2 = 2.\\
   91. \medspace {\rm Then} \medspace \beta^1 = \beta_1 = 5 \medspace {\rm and} \medspace \beta^2 = \beta_2 = 4.\\
   115. \medspace {\rm Then} \medspace \beta^1 = \beta_1 = 7 \medspace {\rm and} \medspace \beta^2 = \beta_2 = 6.                            
                              \end{cases}$ \normalsize

The numerical computation yields
$\dim_{\F_2}\Cohomol^{q}(\leftSL{-35}\rightSL; \thinspace \F_2)
= \begin{cases}
  5, &  q \geq 2, \\
  3,  &  q = 1, \\
\end{cases}$ 
\begin{center}
$\dim_{\F_2}\Cohomol^{q}(\leftSL{-91}\rightSL; \thinspace \F_2)
= $ 
$
\begin{cases}
  9, &  q \geq 2, \\
  5,  &  q = 1, \\
\end{cases}
$\normalsize  \qquad
$ {\rm and} \medspace 
\dim_{\F_2}\Cohomol^{q}(\leftSL{-115}\rightSL; \thinspace \F_2)
=$
$
\begin{cases}
  13, &  q \geq 2, \\
  7,  &  q = 1. \\
\end{cases}
$\normalsize
\end{center}
So we infer that $c = 0$ and that the $d_2$--differential has full rank on the column $n=0$
both in rows $q = 4k+1$ and $q = 4k+3$, while it is zero on classes in even rows.

\subsection*{Example}\textbf{$(\rho)$.} 
On $_\Gamma \backslash X_s := \graphTwo$, 
we observe that $v = 2$ and $\chi(_\Gamma \backslash X_s) = 0$.
Applying Corollary~\ref{cor:E2page}, Note~\ref{sub-sub} and the proof of Lemma~\ref{lem:rho}, 
we obtain the following dimensions for the $E_2^{n,q}$--page. 
$$
\begin{array}{l | cccl}
q = 4k+3 & 2     &  \beta^1(_\Gamma \backslash X) +1+c &  \beta^{2} (_\Gamma \backslash X)+c  \\
q = 4k+2 & 2     &  \beta^{1} (_\Gamma \backslash X) +1 &  \beta^{2} (_\Gamma \backslash X) \\
q = 4k+1 & 1     &  \beta^1(_\Gamma \backslash X)+c &  \beta^{2} (_\Gamma \backslash X)+c  \\
q = 4k   & 1     &  \beta^{1} (_\Gamma \backslash X) &\beta^{2} (_\Gamma \backslash X) \\
\hline k \in \mathbb{N} \cup \{0\} &  n = 0 & n = 1 &  n = 2
\end{array}
$$
By Lemma~\ref{theta-case}, the differential $d_2^{0,3}$ vanishes.
This allows us to conclude that
\begin{center}\mbox{ $\dim_{\F_2}\Cohomol^{q}(\leftSL{-m}\rightSL; \thinspace \F_2)
=
\begin{cases}
   \beta^1(_\Gamma \backslash X) +\beta^2(_\Gamma \backslash X) +c+1 -\rank d_2^{0,1}, &  q = 4k+5, \\
   \beta^1(_\Gamma \backslash X) +\beta^2(_\Gamma \backslash X) +c+2, &  q = 4k+4, \\
   \beta^1(_\Gamma \backslash X) +\beta^2(_\Gamma \backslash X) +c+3, &  q = 4k+3, \\
   \beta^1(_\Gamma \backslash X) +\beta^2(_\Gamma \backslash X) +c+2 -\rank d_2^{0,1}, &  q = 4k+2, \\
   \beta^1(_\Gamma \backslash X)  +1 -\rank d_2^{0,1},  &  q = 1. \\
\end{cases}
$\normalsize
}\end{center}

For the first example of case ($\rho$), namely $\Gamma = \SLO$ with  $m = 2$, 
the orbit space $_\Gamma \backslash X$ is homotopy equivalent to a cylinder  \cite{SchwermerVogtmann},
so $\beta^{1} (_\Gamma \backslash X) = 1$, $\beta^{2} (_\Gamma \backslash X) = 0$ and $d_2^{0,1} = 0$.
From the cell structure with stabilizers and identifications, 
we easily see that $c = 0$ in this case.
\\
The examples of Euclidean rings $\calO_{-2}$, $\calO_{-7}$ and $\calO_{-11}$ 
have been checked in HAP~\cite{HAP}
with the cellular complex imported from Bianchi.gp~\cite{BianchiGP}; 
and the example $\calO_{-2}$ additionally by a paper-and-pencil calculation by the first author using classical methods.
We also observe that these three Euclidean examples are compatible with the homology with Steinberg coefficients calculated
in~\cite{SchwermerVogtmann} 
up to a minor typo present in the latter paper in the case $\calO_{-2}$.  

\vfill

\subsection*{Example}\textbf{$(o$ $o$ $o)$.}  Let $_\Gamma \backslash X_s = \circlegraph \circlegraph \circlegraph$.
Then $v = 0$ and $\chi(_\Gamma \backslash X_s) = 0$, yielding the $E_2$ page dimensions
\\  
$$  
\begin{array}{l | cccl}
q = 4k+3 & 3     &   \beta^1(_\Gamma \backslash X) +2 +c  &  \beta^{2} (_\Gamma \backslash X) +c \\
q = 4k+2 & 1     &   \beta^{1} (_\Gamma \backslash X) &  \beta^{2} (_\Gamma \backslash X)  \\
q = 4k+1 & 3     &  \beta^1(_\Gamma \backslash X)+2 +c &  \beta^{2} (_\Gamma \backslash X) +c \\
q = 4k   & 1     &  \beta^{1} (_\Gamma \backslash X) & \beta^{2} (_\Gamma \backslash X) \\
\hline k \in \mathbb{N} \cup \{0\} &  n = 0 & n = 1 &  n = 2
\end{array}
$$

Lemma~\ref{Lem:EvenDegreeTrivial} yields rank$d_2^{0,2} = 0$.
\medskip

This case is realized for $\Gamma = \SLO$ with 
\begin{itemize}
\item $m = 235$. Then $\beta^1=\beta_1=13$ and $\beta^2=\beta_2=12$.
\\
With the HAP implementation, we obtain
\begin{center}$\dim_{\F_2}\Cohomol^{q}(\leftSL{-235}\rightSL; \thinspace \F_2)
=$
$
\begin{cases}
   27, &  q \geq 2, \\
   14,  &  q = 1. \\
\end{cases}$ \normalsize
\end{center}

\item $m = 427$. Then $\beta_1 = \beta^1 = 21$, 
$\beta_2 = \beta^2 = 20$ and the machine obtains 
\begin{center}
 $\dim_{\F_2}\Cohomol^{q}(\leftSL{-427}\rightSL; \thinspace \F_2)
=$ 
$
\begin{cases}
   43, &  q \geq 2, \\
   22,  &  q = 1. \\
\end{cases}$
\end{center}
\end{itemize}
\normalsize
In both cases, we infer that $c = 1$ and we confirm rank~$d_2^{0,1} = $ rank~$d_2^{0,3} = 2$ 
that we obtain from Appendix~\ref{Numerical results} and Lemma~\ref{lem:CircleExt}.

\newpage

\subsection*{Example}\textbf{$(\theta$ $o$ $o)$.}  Let $_\Gamma \backslash X_s = \graphFive \circlegraph \circlegraph$.
Then $v = 2$ and $\chi(_\Gamma \backslash X_s) = -1$, yielding on the $E_2$ page the dimensions
\\
\begin{wrapfigure}[27]{r}{54mm}
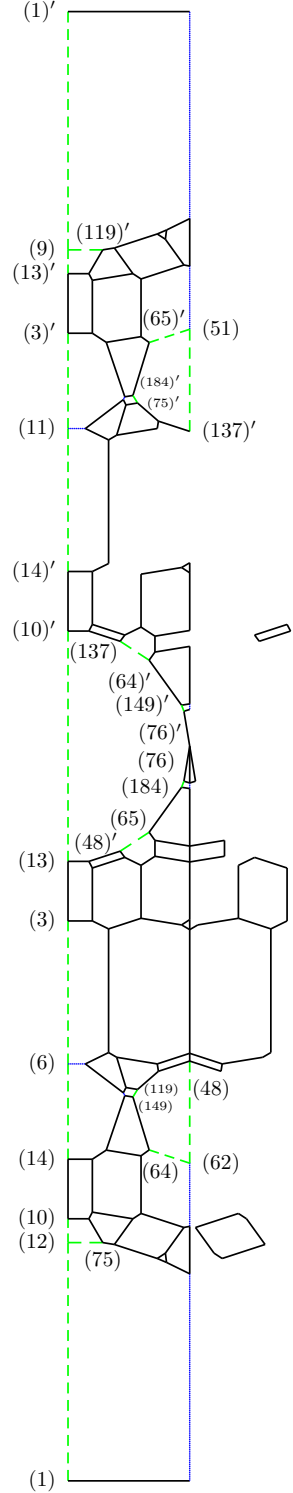

 \centering
 \vspace{-10pt}
   \fundamentalDomain
   \caption{Fundamental domain in the case \mbox{$m = 37$}.}
\label{plot}
 \vspace{-10pt}
\end{wrapfigure} 
$  
\begin{array}{l | cccl}
q = 4k+3 & 4     &  \beta^1(_\Gamma \backslash X) +c+3  &  \beta^{2} (_\Gamma \backslash X) +c \\
q = 4k+2 & 4     &   \beta^{1} (_\Gamma \backslash X) +1 &  \beta^{2} (_\Gamma \backslash X)  \\
q = 4k+1 & 4    &  \beta^1(_\Gamma \backslash X) +c+1  &  \beta^{2} (_\Gamma \backslash X) +c \\
q = 4k   & 1  &  \beta^{1} (_\Gamma \backslash X) & \beta^{2} (_\Gamma \backslash X) \\
\hline k \in \mathbb{N} \cup \{0\} &  n = 0 & n = 1 &  n = 2.
\end{array}
$ \vfill

This case is realized for $\Gamma = \SLO$ with $m = 37$. Then $\beta^1=\beta_1=8$ and $\beta^2=\beta_2=7$.
With the HAP implementation, we obtain
\\ 
$\dim_{\F_2}\Cohomol^{q}(\leftSL{-37}\rightSL; \thinspace \F_2)
=$ \scriptsize$
\begin{cases}
   19, &  q = 4k+5, \\
   19, &  q = 4k+4, \\
   20, &  q = 4k+3, \\
   20, &  q = 4k+2, \\
   11,  &  q = 1, \\
\end{cases}
$\normalsize
\\ 
so we conclude that rank $d_2^{0,1} = $ rank $d_2^{0,3} = 1$ and that $c = 1$.
To illustrate the difficulty in manually determining the co-rank $c$, 
we print, in Figure~\ref{plot}, a fundamental domain for the non-central $2$--torsion subcomplex 
as a dashed graph
contained in the boundary of the Bianchi fundamental polyhedron for the action of $\leftSL{-37}\rightSL$
on $X$. In this figure, vertices labeled by Bianchi.gp with the same number are identified.
However, in order to preserve readability, only the vertices of the non-central $2$--torsion subcomplex are labeled;
and identifications of cells outside of it need to be taken into account for the Betti number $\beta_1=8$.

\subsection*{Example}\textbf{$(o$ $\iota)$.} Let $_\Gamma \backslash X_s = \circlegraph \edgegraph$.
Then $v = 2$ and $\chi(_\Gamma \backslash X_s) = 1$. 
Using our calculations in the above examples, we obtain the following dimensions for the $E_2^{n,q}$--page.
 \\  \vfill
$
\begin{array}{l | cccl}
q = 4k+3 & 3     &  \beta^1(_\Gamma \backslash X) +c+2 &  \beta^{2} (_\Gamma \backslash X)+c  \\
q = 4k+2 & 0     &  \beta^{1} (_\Gamma \backslash X) +1 &  \beta^{2} (_\Gamma \backslash X)  \\
q = 4k+1 & 1     &  \beta^1(_\Gamma \backslash X) +c+2 &  \beta^{2} (_\Gamma \backslash X)+c  \\
q = 4k   & 1     &  \beta^{1} (_\Gamma \backslash X)c& \beta^{2} (_\Gamma \backslash X) \\
\hline k \in \mathbb{N} \cup \{0\} &  n = 0 & n = 1 &  n = 2
\end{array}
$
 \\ \vfill
This case is realized for $\Gamma = \SLO$ with \\
$m = \begin{cases}
6.  &  {\rm Then} \medspace \beta^1=\beta_1=2 \medspace {\rm and} \medspace \beta^2=\beta_2=1.\\
22. &  {\rm Then} \medspace \beta^1=\beta_1=5 \medspace {\rm and} \medspace \beta^2=\beta_2=4.\\          
                                                               \end{cases}$
 \\
With the HAP implementation, we obtain
\\ 
$\dim_{\F_2}\Cohomol^{q}(\leftSL{-6}\rightSL; \thinspace \F_2)
=$ \scriptsize$
\begin{cases}
   4, &  q = 4k+5, \\
   6, &  q = 4k+4, \\
   7, &  q = 4k+3, \\
   5, &  q = 4k+2, \\
   3,  &  q = 1, \\
\end{cases}
$\normalsize
\\
$\dim_{\F_2}\Cohomol^{q}(\leftSL{-22}\rightSL; \thinspace \F_2)
=$ \scriptsize$
\begin{cases}
   10, &  q = 4k+5, \\
   12, &  q = 4k+4, \\
   13, &  q = 4k+3, \\
   11, &  q = 4k+2, \\
   6,  &  q = 1, \\
\end{cases}
$\normalsize
\\ 
so we conclude that $c = 0$ and $d_2 = 0$.

\newpage

\section{Comparison with the second page differential of PSL} \label{The second page differential}
As the $d_2^{\rm E3SL}$ differential is not yet completely determined 
(we do not know a priori which rank it has in odd degrees on components of type $\circlegraph$),
this section compares its rank with the dimension
over $\F_2$ of the $2$--primary part of the image of the $d^2_{2,0}$--differential 
 of the homological equivariant spectral sequence associated to $\PSLtwo(\ringOm)$ 
 on  the cell complex $X$ with integral coefficients.
 We denote the latter dimension by $D$, and set 
 $\Gamma := \mathrm{\PSLtwo}(\calO_{-m})$ for this section.
We compute the dimension $D$ numerically, using the following arguments.
We use the cell structure which is subdivided until each cell $\sigma$ is fixed point-wise by its stabilizer $\Gamma_\sigma$.
The $E^2_{p,q}$--page of this spectral sequence is concentrated as follows in the three columns $p\in \{0,1,2 \}$,
\scriptsize
$$ \xymatrix{
q > 1 &  2\mbox{-}{\rm torsion} \oplus 3\mbox{-}{\rm torsion} &  2\mbox{-}{\rm torsion} \oplus 3\mbox{-}{\rm torsion} & 0\\
q=1 & {\rm Farrell \medspace supplement} & 2\mbox{-}{\rm torsion} \oplus 3\mbox{-}{\rm torsion} & 0 \\
q=0 &{\mathbb{Z}} & \Homol_1(_\Gamma\backslash{X}; \thinspace \Z) & \ar[ull]_{d^2_{2,0}} \Homol_2(_\Gamma\backslash{X}; \thinspace \Z) 
}$$
\normalsize
where the ``Farrell supplement'' is the cokernel of the map
$$
\bigoplus_{\sigma \thinspace\in \thinspace _\Gamma\backslash X^{(0)}} \Homol_1 (\Gamma_\sigma; \Z)
\xleftarrow{\ d^1_{1,1}\ }
\bigoplus_{\sigma \thinspace\in \thinspace _\Gamma\backslash X^{(1)}} \Homol_1(\Gamma_\sigma; \Z)
$$
induced by inclusion of cell stabilizers.
The Farrell supplement and $\Homol_1(_\Gamma\backslash{X}; \thinspace \Z)$ have been computed on a database of Bianchi groups~\cite{higher_torsion}.

For the cases in this database for which the origin of $d^2_{2,0}$ is nontrivial and its target contains $2$--torsion,
Aurel Page has computed the abelianization $\Gamma^{ab} \cong \Homol_1(\Gamma; \thinspace \Z)$, 
i.e. the commutator factor subgroup, of  $\Gamma = \mathrm{\PSLtwo}(\calO_{-m})$.
As the above spectral sequence converges to the group homology of $\Gamma$ with integer coefficients,
we obtain a short exact sequence
$$ 0 \to  {\rm Farrell \medspace supplement}/_{\image d^2_{2,0}} \to \mathrm{\PSLtwo}(\calO_{-m})^{ab} \to \Homol_1(_\Gamma\backslash{X}; \thinspace \Z) \to 1;$$
and we deduce from it the image of $d^2_{2,0}$ in Appendix ~\ref{Numerical results}.

\begin{observation}
\begin{itemize}
 \item Within the scope of the database in Appendix ~\ref{Numerical results}, 
the dimension $D$
 is at most the number of connected components of type~$\circlegraph$ in the non-central $2$--torsion subcomplex quotient. 
 It is clear from $\Homol_1(\Af ; \thinspace \Z) \cong \Z/3$ 
 that the target of $d^2_{2,0}$ has no $2$--torsion on the connected components of type $\edgegraph$,
 and the same property follows for connected components of type $\graphTwo$ 
 by a lemma of~\cite{RahmTorsion}\thinspace{}
 specifying the matrix block induced in the $d^1_{1,1}$--differential by the inclusions into $\Dtwo$.
 The connected components on which our observation is backed only by the numerical results
 are the ones of type $\graphFive$.
 \item In the scope of the table in the appendix, 
 the dimension $D$ 
 agrees with the rank of the $d_2^{0,1}$--differential 
 of the cohomological equivariant spectral sequence with $\F_2$--coefficients associated to 
 $\SLtwo(\ringOm)$.
\end{itemize}
\end{observation}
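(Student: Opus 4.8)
The plan is to treat the two assertions with a common device: both the source and target of the relevant second--page differentials split over the connected components of $_\Gamma\backslash X_s$, by the homological analogue of Lemma~\ref{splitting over components}, so it suffices to argue one component type at a time. For the first assertion, the target $E^2_{0,1}$ of $d^2_{2,0}$ is the Farrell supplement, whose $2$--primary part is the $2$--primary cokernel of $d^1_{1,1}$; since $\image d^2_{2,0}$ lies in this target, I would show that this $2$--primary cokernel vanishes on every component type except $\circlegraph$, and is at most $1$--dimensional on each $\circlegraph$ component. Three of the four types follow from pure group theory: on $\circlegraph$ the stabilizers are cyclic, the loop differential is $i_\ast-\varphi_\ast$ with $\varphi_\ast$ trivial on $\Homol_1(\Z/2;\Z)\cong\Z/2$ (as $\mathrm{Aut}(\Z/2)=1$), so the cokernel retains exactly one $\Z/2$; on $\edgegraph$ the vertices carry $\Af$ with $\Homol_1(\Af;\Z)\cong\Z/3$ and the $\Z/2$--edge maps in trivially, so the $2$--primary cokernel is $0$; on $\graphTwo$ the only $2$--primary target is $\Homol_1(\Dtwo;\Z)\cong(\Z/2)^2$, and the cited lemma of~\cite{RahmTorsion} shows that the images of the inclusions of the order--$2$ subgroups span it, so the $2$--primary cokernel is again $0$.

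This leaves type $\graphFive$, where two $\Dtwo$ vertices contribute $(\Z/2)^4$ and the three $\Z/2$--edges give a $(\Z/2)^3\to(\Z/2)^4$ block whose cokernel depends on how the order--$2$ subgroups embed simultaneously in both copies of $\Dtwo$. The abstract group theory does not determine this block, so here I would fall back on the direct computation of the Farrell supplement and of $\image d^2_{2,0}$ recorded in Appendix~\ref{Numerical results}; combined with the three cases above, this yields the bound that $D$ is at most the number of connected components of type $\circlegraph$.

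For the second assertion I would link the cohomological $\F_2$ spectral sequence of $\SLtwo(\ringOm)$ to the integral homological one of $\PSLtwo(\ringOm)$ in three steps. First, using the central extension and Lemma~\ref{splitting over components}, I would identify the cohomological $d_2^{0,1}$ for $\SLtwo(\ringOm)$ with that for $\PSLtwo(\ringOm)$ over $\F_2$: in degree $q=1$ the exterior class $b_1\in\Cohomol^1(\Z/4)$ corresponds to $x_1\in\Cohomol^1(\Z/2)$, the base quotient $_\Gamma\backslash X$ is common, and the restriction maps of Proposition~\ref{prop:restriction} agree on these classes, so the two differentials $E_2^{0,1}\to E_2^{2,0}\cong\Cohomol^2(_\Gamma\backslash X)$ coincide. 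Second, since $\F_2$ is a field, the Universal Coefficient Theorem makes the cohomological equivariant spectral sequence dual to the homological one, and $d_2^{0,1}\colon E_2^{0,1}\to E_2^{2,0}$ is the transpose of the $\F_2$--homological $d^2_{2,0}\colon E^2_{2,0}\to E^2_{0,1}$, hence has the same rank. Third, as $\Homol_2(_\Gamma\backslash X;\Z)$ is free, it remains to compare the rank of the $\F_2$--homological $d^2_{2,0}$ with the dimension $D$ of the $2$--primary part of the image of the \emph{integral} $d^2_{2,0}$, that is, to control the reduction of the integral differential modulo~$2$.

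The main obstacle lies in this last comparison. The $\F_2$ differential need not be the naive mod~$2$ reduction of the integral one --- it may acquire Bockstein contributions --- and the equality $\rank d^2_{2,0}=D$ additionally requires that the $2$--primary part of $\image d^2_{2,0}$ be elementary abelian and meet $2\,E^2_{0,1}$ trivially. This step, together with the type~$\graphFive$ analysis above, is exactly where the argument is at present only numerical: each holds throughout the database of Appendix~\ref{Numerical results}, and a uniform proof of either would upgrade the Observation to a theorem.
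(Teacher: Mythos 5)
Your overall architecture coincides with the paper's: split the target of $d^2_{2,0}$ over the connected components of $_\Gamma\backslash X_s$, settle types $\circlegraph$ and $\edgegraph$ by group theory, and let the data of Appendix~\ref{Numerical results} carry type $\graphFive$ and the whole second bullet (the paper does not prove that bullet structurally either; it is read off the table). However, your treatment of type $\graphTwo$ contains a step that fails. You assert that the images of the inclusions of the order-$2$ subgroups span $\Homol_1(\Dtwo;\thinspace\Z)\cong(\Z/2)^2$, so that the $2$-primary cokernel of $d^1_{1,1}$ vanishes on such a component. It does not vanish. Write $C$, $C'$, $C''$ for the three order-$2$ subgroups of the $\Dtwo$ vertex stabilizer, as in the proof of Corollary~\ref{obs:4cases}, and $\bar\sigma,\bar\sigma',\bar\sigma''$ for the images of their generators in $\Homol_1(\Dtwo;\thinspace\Z)$: the loop carries the conjugacy class $\{C',C''\}$ and the remaining edge carries $C$. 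The loop enters $d^1_{1,1}$ through the \emph{difference} of its two attaching maps, hence contributes $\bar\sigma'-\bar\sigma''=\bar\sigma$ (the three nonzero elements of $(\Z/2)^2$ sum to zero), which is the same element as the contribution $\bar\sigma$ of the straight edge. The image of $d^1_{1,1}$ in the $2$-primary part is therefore the single line $\langle\bar\sigma\rangle$, and the cokernel --- the target of $d^2_{2,0}$ on this component --- is $\Z/2$, not $0$; inserting the subdivision vertices of the unreduced complex does not change this count.

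The paper's own table corroborates this. For $m=34$ (the row with $-\Delta=136$: $o=\rho=2$, $\iota=\theta=0$), the table gives $\image d^2_{2,0}$ with $2$-part $(\Z/2)^2$ and $\PSLO^{\rm Ab}_{\rm tors}=(\Z/2)^2$, while $\Homol_1(_\Gamma\backslash X;\thinspace\Z)$ is torsion-free in this range; the exact sequence of Section~\ref{The second page differential} then forces the Farrell supplement to have $2$-part of $\F_2$-dimension $4=o+\rho$, so each $\graphTwo$ component does contribute a $\Z/2$ to the target. Consequently the bound $D\le o$ cannot be obtained for type $\graphTwo$ by estimating the $2$-torsion of the target, which is how you argue; what is needed is the finer statement that the image of $d^2_{2,0}$ \emph{misses} the $\Z/2$ carried by each such component, i.e.\ that this class survives to $\Homol_1(\PSLO;\thinspace\Z)$. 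That statement concerns the differential itself, and it is exactly what the paper invokes the matrix-block lemma of~\cite{RahmTorsion} for (and how its sentence ``the target \dots has no $2$-torsion'' must be read). Two smaller points: in your $\circlegraph$ case, $\varphi_*$ is not trivial but equal to $i_*$ (as ${\rm Aut}(\Z/2)=1$), so the difference $i_*-\varphi_*$ vanishes and one $\Z/2$ remains --- were $\varphi_*$ literally zero, the cokernel would vanish; and for $\graphFive$ the group theory does determine the block (the cokernel is $(\Z/2)^2$ per component), which is precisely why only the image, and only numerically, can be controlled there --- your conclusion is right, but the stated reason is not.
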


\vfill

\appendix

\section{Numerical results} \label{Numerical results}
The program~\cite{AurelPage} computes a presentation of the Bianchi groups.
Aurel Page has carried this out and calculated the commutator factor groups (abelianizations) $\SLO^{\rm Ab}$ and $\PSLO^{\rm Ab}$
in the cases that we need in order to deduce the image of~$d^2_{2,0}$ in the way 
described in Section~\ref{The second page differential}\thinspace{}.
The outcome of this procedure is included in the below tables.
We denote by $\Delta$ the discriminant of $\calO_{-m}$, i.e.
$$\Delta = \begin{cases} -m, & m \equiv 3 \bmod 4, \\
-4m,  & \mathrm{otherwise.} \end{cases}$$
 and by $\beta_1$ the first Betti number of both $\PSLO$ and $\SLO$.
This Betti number has been computed with the two independent programs~\cite{AurelPage} and~\cite{BianchiGP} with identical results. 
We provide the remaining torsion parts $\SLO^{\rm Ab}_{\rm tors}$ and $\PSLO^{\rm Ab}_{\rm tors}$ obtained with~\cite{AurelPage}.
We insert the quotient of a reduced non-central $2$--torsion subcomplex from~\cite{AccessingFarrell} and~\cite{BianchiGP}:
\\
Let $o$ denote the number of connected components of homeomorphism type $\circlegraph$, 
let $\iota$ the one for $\edgegraph$,
$\theta$ for $\graphFive$ and $\rho$ for $\graphTwo$.
In the cases where the d\'evissage 
(extension problem between the last page of the equivariant spectral sequence and group homology)
of the $2$-torsion part of $\PSLO^{\rm Ab}$ is trivial,
using Corollary~\ref{cor:E2page} and Lemmata~\ref{lem:lemOne} through~\ref{lem:rho},
we deduce the rank of $d_2^{0,1}$ for the equivariant spectral sequence with $\F_2$--coefficients associated to the action of
$\SLO$ on $X$ as the difference 
$$ \rank_{\F_2} d_2^{0,1} = o +2\theta +\rho +\dim_{\F_2} 
\left(\text{Hom}(\operatorname{H}_1(_\Gamma\backslash X; \Z)_{\rm tors}, \F_2)\right) 
-\dim_{\F_2} \left(\text{Hom}(\SLO^{\rm Ab}_{\rm tors}, \F_2)\right).$$
We can see that the d\'evissage of the $2$-torsion part of $\PSLO^{\rm Ab}$ is trivial when
\begin{itemize}
 \item each summand $\Z/(2^r)$ of $\PSLO^{\rm Ab}$ has $r = 1$,
 \item or when $\operatorname{H}_1(_\Gamma\backslash X; \Z)$ admits no $2$-torsion.
\end{itemize}
These two criteria allow us to apply the above formula to all cases in our database except for
$m \in \{142, 1227, 1411, 1555\}$.
In those four cases, the situation is somewhat more complicated.

By Lemma~\ref{lem:CircleExt},
we know furthermore that the rank of $d_2^{0,1}$ is the rank of  $d_2^{0,q}$ in all odd degrees $q = 2k+1$
on components of type $\circlegraph$.
\newpage

\scriptsize
$$
\begin{array}{|l|l|c|c|c|c|c|c|c|c|c|}
\hline
 -\Delta       & m         &    o       &    \iota      &      \theta   &     \rho      &     \beta_1   &   \PSLO^{\rm Ab}_{\rm tors}                           & \image d^2_{2,0} ({\rm PSL}) & \SLO^{\rm Ab}_{\rm tors} &  \rank_{\F_2} d_2^{0,1} ({\rm SL})
\\ \hline  & & & & & & & & & &
\\ 35 	&	35	&	1	&	0	&	0	&	0	&	3	&	  \Z/3      																				&	 \Z/2                        	&	  \Z/3                               			&	1
\\ 40 	&	10	&	0	&	0	&	1	&	0	&	3	&	  (\Z/2)^2      																				&	 \Z/3                        	&	  (\Z/2)^2                           			&	0
\\ 47 	&	47	&	1	&	0	&	0	&	0	&	5	&	  \Z/2  \oplus					  \Z/3      															&	0	&	  \Z/4  \oplus	  \Z/3   		&	0
\\ 52 	&	13	&	0	&	0	&	1	&	0	&	3	&	  (\Z/2)^2      																				&	0	&	  (\Z/2)^2                           			&	0
\\ 55 	&	55	&	1	&	0	&	0	&	0	&	5	&	  \Z/2      																				&	 \Z/3                        	&	  \Z/4                               			&	0
\\ 56 	&	14	&	2	&	0	&	0	&	0	&	5	&	  \Z/2  \oplus					  \Z/3      															&	 \Z/2                        	&	  \Z/2  \oplus	  \Z/3   		&	1
\\ 68 	&	17	&	0	&	1	&	1	&	0	&	5	&	  (\Z/2)^2        \oplus					  \Z/3 															&	0	&	  (\Z/2)^2  \oplus	  \Z/3    		&	0
\\ 79 	&	79	&	3	&	0	&	0	&	0	&	6	&	  (\Z/2)^2      																				&	 \Z/2                        	&	  \Z/2  \oplus	  \Z/4               		&	1
\\ 84 	&	21	&	3	&	0	&	0	&	0	&	7	&	  (\Z/2)^2        \oplus					  \Z/3 															&	 \Z/2 \oplus \Z/3            	&	  (\Z/2)^2  \oplus	  \Z/3    		&	1
\\ 87 	&	87	&	1	&	0	&	0	&	0	&	8	&	  \Z/2  \oplus					  \Z/3      															&	0	&	  \Z/4  \oplus	  \Z/3   		&	0
\\ 88 	&	22	&	1	&	1	&	0	&	0	&	5	&	  \Z/2      																				&	 \Z/3                        	&	  \Z/2                               			&	0
\\ 91 	&	91	&	1	&	0	&	0	&	0	&	5	&	0	&	 \Z/2                        	&	0	&	1
\\ 95 	&	95	&	1	&	0	&	0	&	0	&	9	&	  \Z/2  \oplus					  \Z/3      															&	0	&	  \Z/4  \oplus	  \Z/3   		&	0
\\ 103 	&	103	&	1	&	0	&	0	&	0	&	7	&	  \Z/2      																				&	0	&	  \Z/4                             			&	0
\\ 104 	&	26	&	0	&	0	&	1	&	0	&	8	&	  (\Z/2)^2        \oplus					  \Z/3 															&	 \Z/3                        	&	  (\Z/2)^2  \oplus	  \Z/3   		&	0
\\ 111 	&	111	&	1	&	0	&	0	&	0	&	10	&	  \Z/2  \oplus					  \Z/3 															&	0	&	  \Z/4  \oplus	  \Z/3    		&	0
\\ 115 	&	115	&	1	&	0	&	0	&	0	&	7	&	0	&	 \Z/2 \oplus \Z/3            	&	0	&	1
\\ 116 	&	29	&	0	&	0	&	1	&	0	&	9	&	  (\Z/2)^2  \oplus					  \Z/3 															&	0	&	  (\Z/2)^2  \oplus	  \Z/3   		&	0
\\ 119 	&	119	&	2	&	0	&	0	&	0	&	11	&	  \Z/2  \oplus					  \Z/3 															&	 \Z/2                        	&	  \Z/4  \oplus	  \Z/3    		&	1
\\ 120 	&	30	&	3	&	0	&	0	&	0	&	10	&	  \Z/2  \oplus					  \Z/3 															&	 (\Z/2)^2 \oplus (\Z/3)^2    	&	  \Z/2  \oplus	  \Z/3     		&	2
\\ 127 	&	127	&	1	&	0	&	0	&	0	&	8	&	  \Z/2      																				&	0	&	  \Z/4                             			&	0
\\ 132 	&	33	&	3	&	2	&	0	&	0	&	10	&	  (\Z/2)^2  \oplus					  \Z/3 															&	 \Z/2 \oplus (\Z/3)^2        	&	  (\Z/2)^2  \oplus	  \Z/3 		&	1
\\ 136 	&	34	&	2	&	0	&	0	&	2	&	8	&	  (\Z/2)^2      																				&	 (\Z/2)^2 \oplus \Z/3        	&	  (\Z/2)^2                          			&	2
\\ 143 	&	143	&	1	&	0	&	0	&	0	&	12	&	  \Z/2  \oplus					  \Z/3 															&	 \Z/3                        	&	  \Z/4  \oplus	  \Z/3    		&	0
\\ 148 	&	37	&	2	&	0	&	1	&	0	&	8	&	  (\Z/2)^3      																				&	 \Z/2                        	&	  (\Z/2)^3                          			&	1
\\ 151 	&	151	&	1	&	0	&	0	&	0	&	10	&	  \Z/2      																				&	0	&	  \Z/4                           			&	0
\\ 152 	&	38	&	1	&	1	&	0	&	0	&	10	&	  \Z/2  \oplus					  \Z/3 															&	0	&	  \Z/2  \oplus	  \Z/3     		&	0
\\ 155 	&	155	&	1	&	0	&	0	&	0	&	10	&	  \Z/3      																				&	 \Z/2                        	&	  \Z/3                           			&	1
\\ 159 	&	159	&	1	&	0	&	0	&	0	&	14	&	  \Z/2  \oplus					  \Z/3 															&	0	&	  \Z/4  \oplus	  \Z/3    		&	0
\\ 164 	&	41	&	0	&	1	&	1	&	0	&	12	&	  (\Z/2)^2  \oplus					  \Z/3 															&	0	&	  (\Z/2)^2  \oplus	  \Z/3 		&	0
\\ 167 	&	167	&	1	&	0	&	0	&	0	&	13	&	  \Z/2  \oplus					  \Z/3 															&	0	&	  \Z/4  \oplus	  \Z/3    		&	0
\\ 168 	&	42	&	3	&	0	&	0	&	0	&	13	&	  \Z/2  \oplus					  \Z/3 															&	 (\Z/2)^2 \oplus \Z/3        	&	  \Z/2  \oplus	  \Z/3     		&	2
\\ 183 	&	183	&	1	&	0	&	0	&	0	&	14	&	  \Z/2  \oplus					  \Z/3 															&	0	&	  \Z/4  \oplus	  \Z/3    		&	0
\\ 184 	&	46	&	2	&	0	&	0	&	0	&	11	&	  \Z/2      																				&	 \Z/2 \oplus \Z/3            	&	  \Z/2                            			&	1
\\ 191 	&	191	&	1	&	0	&	0	&	0	&	15	&	  \Z/2  \oplus					  \Z/3 															&	0	&	  \Z/4  \oplus	  \Z/3    		&	0
\\ 195 	&	195	&	2	&	0	&	0	&	0	&	15	&	  \Z/3      																				&	 (\Z/2)^2 \oplus \Z/3        	&	  \Z/3                           			&	2
\\ 199 	&	199	&	1	&	0	&	0	&	0	&	13	&	  \Z/2      																				&	0	&	  \Z/4                           			&	0
\\ 203 	&	203	&	1	&	0	&	0	&	0	&	12	&	  \Z/3      																				&	 \Z/2                        	&	  \Z/3                           			&	1
\\ 212 	&	53	&	0	&	0	&	1	&	0	&	14	&	  (\Z/2)^2  \oplus					  \Z/3 															&	0	&	  (\Z/2)^2  \oplus	  \Z/3 		&	0
\\ 215 	&	215	&	1	&	0	&	0	&	0	&	18	&	  \Z/2  \oplus					  \Z/3 															&	0	&	  \Z/4  \oplus	  \Z/3    		&	0
\\ 219 	&	219	&	1	&	2	&	0	&	0	&	13	&	  \Z/3      																				&	 \Z/2                        	&	  \Z/3                           			&	1
\\ 223 	&	223	&	3	&	0	&	0	&	0	&	15	&	  \Z/2      																				&	 (\Z/2)^2                    	&	  \Z/4                           			&	2
\\ 228 	&	57	&	3	&	2	&	0	&	0	&	16	&	  (\Z/2)^2  \oplus					  \Z/3 															&	 \Z/2 \oplus \Z/3        	&	  (\Z/2)^2  \oplus	  \Z/3 		&	1
\\ 231 	&	231	&	2	&	0	&	0	&	0	&	21	&	  \Z/2  \oplus					  \Z/3 															&	 \Z/2 \oplus \Z/3            	&	  \Z/4  \oplus	  \Z/3    		&	1
\\ 232 	&	58	&	0	&	0	&	1	&	0	&	12	&	  (\Z/2)^2      																				&	 \Z/3                        	&	  (\Z/2)^2                        			&	0
\\ 235 	&	235	&	3	&	0	&	0	&	0	&	13	&	  \Z/2      																				&	 (\Z/2)^2 \oplus \Z/3        	&	  \Z/2                           			&	2
\\ 239 	&	239	&	1	&	0	&	0	&	0	&	18	&	  \Z/2  \oplus					  \Z/3 															&	0	&	  \Z/4  \oplus	  \Z/3    		&	0
\\ 244 	&	61	&	0	&	0	&	1	&	0	&	15	&	  (\Z/2)^2      																				&	0	&	  (\Z/2)^2                        			&	0
\\ 247 	&	247	&	1	&	0	&	0	&	0	&	14	&	  \Z/2      																				&	0	&	  \Z/4                           			&	0
\\ 248 	&	62	&	2	&	0	&	0	&	0	&	16	&	  \Z/2  \oplus					  \Z/3 															&	 \Z/2                        	&	  \Z/2  \oplus	  \Z/3     		&	1
\\ 255 	&	255	&	2	&	0	&	0	&	0	&	23	&	  \Z/2  \oplus					  \Z/3 															&	 \Z/2 \oplus (\Z/3)^2        	&	  \Z/4  \oplus	  \Z/3    		&	1
\\ 259 	&	259	&	1	&	0	&	0	&	0	&	14	&	0	&	 \Z/2 \oplus \Z/3            	&	0	&	1
\\ 260 	&	65	&	1	&	0	&	2	&	0	&	20	&	  (\Z/2)^4  \oplus					  \Z/3 															&	 \Z/2 \oplus \Z/3            	&	  (\Z/2)^4  \oplus	  \Z/3 		&	1
\\ 263 	&	263	&	1	&	0	&	0	&	0	&	18	&	  \Z/2  \oplus					  \Z/3 															&	 0            	&	  \Z/4  \oplus	  \Z/3    		&	0
\\ 264 	&	66	&	2	&	2	&	0	&	0	&	20	&	  \Z/2  \oplus					  \Z/3 															&	 \Z/2 \oplus (\Z/3)^2        	&	  \Z/2  \oplus	  \Z/3     		&	1
\\ 271 	&	271	&	1	&	0	&	0	&	0	&	17	&	  \Z/2      																				&	0	&	  \Z/4                           			&	0
\\ 276 	&	69	&	3	&	0	&	0	&	0	&	23	&	  (\Z/2)^2  \oplus					  \Z/3 															&	 \Z/2 \oplus \Z/3            	&	  (\Z/2)^2  \oplus	  \Z/3 		&	1
\\ 280 	&	70	&	3	&	0	&	0	&	0	&	19	&	  \Z/2      																				&	 (\Z/2)^2 \oplus (\Z/3)^2    	&	  \Z/2                            			&	2
\\ 287 	&	287	&	2	&	0	&	0	&	0	&	21	&	  \Z/2  \oplus					  \Z/3 															&	 \Z/2                        	&	  \Z/4  \oplus	  \Z/3    		&	1
\\ 291 	&	291	&	1	&	2	&	0	&	0	&	17	&	  \Z/2  \oplus					  \Z/3 															&	0	&	  \Z/2  \oplus	  \Z/3    		&	0
\\ 292 	&	73	&	0	&	1	&	1	&	0	&	16	&	  (\Z/2)^2      																				&	 \Z/3                        	&	  (\Z/2)^2                        			&	0
\\ 295 	&	295	&	1	&	0	&	0	&	0	&	19	&	  \Z/2      																				&	 \Z/3                        	&	  \Z/4                           			&	0
\\ 296 	&	74	&	0	&	0	&	1	&	0	&	19	&	  (\Z/2)^4  \oplus					  \Z/3 															&	 \Z/3                        	&	  (\Z/2)^4  \oplus	  \Z/3 		&	0
\\ 299 	&	299	&	1	&	0	&	0	&	0	&	18	&	  (\Z/3)^2      																				&	 \Z/2 \oplus (\Z/3)^2        	&	  (\Z/3)^2                       			&	1
\\ 303 	&	303	&	1	&	0	&	0	&	0	&	22	&	  \Z/2  \oplus					  \Z/3 															&	0	&	  \Z/4  \oplus	  \Z/3    		&	0
\\ 308 	&	77	&	3	&	0	&	0	&	0	&	23	&	  (\Z/2)^2  \oplus					  \Z/3 															&	 \Z/2 \oplus \Z/3            	&	  (\Z/2)^2  \oplus	  \Z/3 		&	1
\\ 311 	&	311	&	1	&	0	&	0	&	0	&	23	&	  \Z/2  \oplus					  \Z/3 															&	0	&	  \Z/4  \oplus	  \Z/3    		&	0
\\ \hline
\end{array}
$$

$$
\begin{array}{|l|l|c|c|c|c|c|c|c|c|c|}
\hline
 -\Delta & m &      o       &    \iota      &      \theta   &     \rho      &     \beta_1   &   \PSLO^{\rm Ab}_{\rm tors}                           & \image d^2_{2,0} ({\rm PSL}) & \SLO^{\rm Ab}_{\rm tors} &  {\rm rk} d_2^{0,1}
\\ \hline  & & & & & & & & & &
\\ 312 	&	78	&	3	&	0	&	0	&	0	&	22	&	  (\Z/2)^2  \oplus					  \Z/3 															&	 \Z/2 \oplus \Z/3         	&	  (\Z/2)^2  \oplus	  \Z/3                                 		&	1
\\ 319 	&	319	&	1	&	0	&	0	&	0	&	20	&	  \Z/2      																				&	 \Z/3                     	&	  \Z/4                                                           			&	0
\\ 323 	&	323	&	1	&	2	&	0	&	0	&	16	&	  \Z/3      																				&	 \Z/2                     	&	  \Z/3                                                           			&	1
\\ 327 	&	327	&	1	&	0	&	0	&	0	&	24	&	  \Z/2  \oplus					  \Z/3 															&	0	&	  \Z/4  \oplus	  \Z/3                                    		&	0
\\ 328 	&	82	&	1	&	1	&	1	&	0	&	17	&	  (\Z/2)^3      																				&	 \Z/3                     	&	  (\Z/2)^3                                                        			&	0
\\ 335 	&	335	&	1	&	0	&	0	&	0	&	26	&	  \Z/2  \oplus					  \Z/3 															&	0	&	  \Z/4  \oplus	  \Z/3                                    		&	0
\\ 340 	&	85	&	0	&	0	&	2	&	0	&	23	&	  (\Z/2)^4      																				&	 (\Z/3)^2                 	&	  (\Z/2)^4                                                        			&	0
\\ 344 	&	86	&	1	&	1	&	0	&	0	&	21	&	  (\Z/2)^3  \oplus					  \Z/3 															&	0	&	  (\Z/2)^3  \oplus	  \Z/3                                 		&	0
\\ 355 	&	355	&	1	&	0	&	0	&	0	&	20	&	0	&	 \Z/2 \oplus \Z/3         	&	0	&	1
\\ 356 	&	89	&	0	&	1	&	1	&	0	&	24	&	  (\Z/2)^2  \oplus					  \Z/3 															&	0	&	  (\Z/2)^2  \oplus	  \Z/3                                 		&	0
\\ 359 	&	359	&	3	&	0	&	0	&	0	&	25	&	  (\Z/2)^4  \oplus					  \Z/3 															&	 \Z/2                     	&	  (\Z/2)^3  \oplus	  \Z/4  \oplus	  \Z/3                  	&	1
\\ 367 	&	367	&	1	&	0	&	0	&	0	&	20	&	  \Z/2  \oplus					  (\Z/3)^2 															&	 \Z/3                     	&	  \Z/4  \oplus	  (\Z/3)^2                                		&	0
\\ 371 	&	371	&	1	&	0	&	0	&	0	&	22	&	  \Z/3      																				&	 \Z/2                     	&	  \Z/3                                                           			&	1
\\ 372 	&	93	&	3	&	0	&	0	&	0	&	27	&	  (\Z/2)^2  \oplus					  \Z/3 															&	 \Z/2 \oplus \Z/3         	&	  (\Z/2)^2  \oplus	  \Z/3                                 		&	1
\\ 376 	&	94	&	2	&	0	&	0	&	0	&	22	&	  \Z/2      																				&	 \Z/2 \oplus \Z/3         	&	  \Z/2                                                            			&	1
\\ 383 	&	383	&	1	&	0	&	0	&	0	&	25	&	  \Z/2  \oplus					  \Z/3 															&	0	&	  \Z/4  \oplus	  \Z/3                                    		&	0
\\ 388 	&	97	&	0	&	1	&	1	&	0	&	21	&	  (\Z/2)^2      																				&	 \Z/3                     	&	  (\Z/2)^2                                                        			&	0
\\ 391 	&	391	&	2	&	0	&	0	&	0	&	25	&	  \Z/2      																				&	 \Z/2 \oplus \Z/3         	&	  \Z/4                                                           			&	1
\\ 395 	&	395	&	1	&	0	&	0	&	0	&	24	&	  (\Z/2)^2  \oplus					  \Z/3 															&	 \Z/2                     	&	  (\Z/2)^2  \oplus	  \Z/3                                		&	1
\\ 399 	&	399	&	4	&	0	&	0	&	0	&	33	&	  (\Z/2)^2  \oplus					  \Z/3 															&	 (\Z/2)^2 \oplus \Z/3     	&	  \Z/2  \oplus	  \Z/4  \oplus	  \Z/3                    	&	2
\\ 403 	&	403	&	1	&	0	&	0	&	0	&	19	&	0	&	 \Z/2                     	&	0	&	1
\\ 404 	&	101	&	2	&	0	&	1	&	0	&	28	&	  (\Z/2)^3  \oplus					  \Z/3 															&	 \Z/2                     	&	  (\Z/2)^3  \oplus	  \Z/3                                		&	1
\\ 407 	&	407	&	1	&	0	&	0	&	0	&	29	&	  \Z/2  \oplus					  \Z/3 															&	 \Z/3                     	&	  \Z/4  \oplus	  \Z/3                                    		&	0
\\ 408 	&	102	&	2	&	2	&	0	&	0	&	27	&	  \Z/2  \oplus					  (\Z/3)^2 															&	 \Z/2 \oplus (\Z/3)^4     	&	  \Z/2  \oplus	  (\Z/3)^2                                		&	1
\\ 415 	&	415	&	1	&	0	&	0	&	0	&	28	&	  \Z/2      																				&	 \Z/3                     	&	  \Z/4                                                           			&	0
\\ 420 	&	105	&	8	&	0	&	0	&	0	&	41	&	  (\Z/2)^2  \oplus					  \Z/3 															&	 (\Z/2)^6 \oplus (\Z/3)^3 	&	  (\Z/2)^2  \oplus	  \Z/3                                		&	6
\\ 424 	&	106	&	0	&	0	&	1	&	0	&	23	&	  (\Z/2)^4      																				&	 \Z/3                     	&	  (\Z/2)^4                                                       			&	0
\\ 427 	&	427	&	3	&	0	&	0	&	0	&	21	&	  \Z/2      																				&	 (\Z/2)^2                 	&	  \Z/2                                                           			&	2
\\ 431 	&	431	&	1	&	0	&	0	&	0	&	29	&	  (\Z/2)^2  \oplus					  \Z/3 															&	0	&	  \Z/2  \oplus	  \Z/4  \oplus	  \Z/3                    	&	0
\\ 435 	&	435	&	2	&	0	&	0	&	0	&	31	&	  \Z/3      																				&	 (\Z/2)^2 \oplus (\Z/3)^5 	&	  \Z/3                                                           			&	2
\\ 436 	&	109	&	0	&	0	&	1	&	0	&	25	&	  (\Z/2)^4      																				&	0	&	  (\Z/2)^4                                                       			&	0
\\ 439 	&	439	&	5	&	0	&	0	&	0	&	26	&	  (\Z/2)^3      																				&	 (\Z/2)^2                 	&	  (\Z/2)^2  \oplus	  \Z/4                                         		&	2
\\ 440 	&	110	&	3	&	0	&	0	&	0	&	32	&	  (\Z/2)^2  \oplus					  \Z/3 															&	 \Z/2 \oplus \Z/3         	&	  (\Z/2)^2  \oplus	  \Z/3                                		&	1
\\ 443 	&	443	&	1	&	1	&	0	&	0	&	21	&	  \Z/2  \oplus					  \Z/3 															&	0	&	  \Z/2  \oplus	  \Z/3                                    		&	0
\\ 447 	&	447	&	1	&	0	&	0	&	0	&	32	&	  \Z/2  \oplus					  \Z/3 															&	0	&	  \Z/4  \oplus	  \Z/3                                    		&	0
\\ 452 	&	113	&	0	&	1	&	1	&	0	&	27	&	  (\Z/2)^4  \oplus					  \Z/3 															&	0	&	  (\Z/2)^4  \oplus	  \Z/3                                		&	0
\\ 455 	&	455	&	2	&	0	&	0	&	0	&	39	&	  (\Z/2)^4  \oplus					  \Z/3 															&	 \Z/3                     	&	  (\Z/2)^3 \oplus \Z/4 \oplus \Z/3                  	&	0
\\ 456 	&	114	&	2	&	2	&	0	&	0	&	32	&	  (\Z/2)^3  \oplus					  \Z/3 															&	 \Z/2 \oplus (\Z/3)^3     	&	  (\Z/2)^3  \oplus	  \Z/3                                		&	1
\\ 463 	&	463	&	1	&	0	&	0	&	0	&	23	&	  \Z/2      																				&	0	&	  \Z/4                                                           			&	0
\\ 471 	&	471	&	1	&	0	&	0	&	0	&	34	&	  \Z/2  \oplus					  \Z/3 															&	0	&	  \Z/4  \oplus	  \Z/3                                    		&	0
\\ 472 	&	118	&	1	&	1	&	0	&	0	&	25	&	  (\Z/2)^3      																				&	 \Z/3                     	&	  (\Z/2)^3                                                       			&	0
\\ 479 	&	479	&	1	&	0	&	0	&	0	&	33	&	  \Z/2  \oplus					  (\Z/3)^3 															&	0	&	  \Z/4  \oplus	  (\Z/3)^3                                		&	0
\\ 483 	&	483	&	2	&	0	&	0	&	0	&	33	&	  \Z/3      																				&	  (\Z/2)^2 \oplus \Z/3    	&	  \Z/3                                                           			&	2
\\ 487 	&	487	&	1	&	0	&	0	&	0	&	24	&	  \Z/2  \oplus					  (\Z/13)^2 															&	0	&	  \Z/4  \oplus	  (\Z/13)^2                              		&	0
\\ 488 	&	122	&	0	&	0	&	1	&	0	&	28	&	  (\Z/2)^4  \oplus					  \Z/3 															&	 \Z/3                     	&	  (\Z/2)^4  \oplus	  \Z/3                                		&	0
\\ 499 	&	499	&	2	&	1	&	0	&	0	&	22	&	  (\Z/2)^2  \oplus					  (\Z/3)^2															&	0	&	  (\Z/2)^2  \oplus	  (\Z/3)^2                            		&	0
\\ 520 	&	130	&	0	&	0	&	2	&	0	&	32	&	  (\Z/2)^6      																				&	 (\Z/3)^2                 	&	  (\Z/2)^6                                                       			&	0
\\ 532 	&	133	&	3	&	0	&	0	&	0	&	33	&	  (\Z/2)^2      																				&	 \Z/2 \oplus (\Z/3)^2     	&	  (\Z/2)^2                                                       			&	1
\\ 555 	&	555	&	2	&	0	&	0	&	0	&	39	&	  \Z/3      																				&	 (\Z/2)^2 \oplus \Z/3     	&	  \Z/3                                                           			&	2
\\ 568 	&	142	&	6	&	0	&	0	&	0	&	29	&	  (\Z/2)^3  \oplus					  \Z/4 															&	 (\Z/2)^3 \oplus \Z/3     	&	  (\Z/2)^3  \oplus	  \Z/4                                		&	
\\ 595 	&	595	&	2	&	0	&	0	&	0	&	37	&	  \Z/3      																				&	 (\Z/2)^2 \oplus (\Z/3)^3 	&	  \Z/3                                                           			&	2
\\ 667 	&	667	&	1	&	0	&	0	&	0	&	32	&	0	&	 \Z/2 \oplus \Z/3         	&	0	&	1
\\ 696 	&	174	&	3	&	0	&	0	&	0	&	50	&	  \Z/2  \oplus					  \Z/3 															&	 (\Z/2)^2 \oplus (\Z/3)^2 	&	  \Z/2  \oplus	  \Z/3                                    		&	2
\\ 715 	&	715	&	2	&	0	&	0	&	0	&	43	&	0	&	 (\Z/2)^2 \oplus (\Z/3)^2 	&	0	&	2
\\ 723 	&	723	&	1	&	2	&	0	&	0	&	41	&	  (\Z/2)^2  \oplus					  \Z/3 															&	 \Z/2                     	&	  (\Z/2)^2  \oplus	  \Z/3                                		&	1
\\ 760 	&	190	&	3	&	0	&	0	&	0	&	46	&	  (\Z/2)^4      																				&	 \Z/2 \oplus (\Z/3)^2     	&	  (\Z/2)^4                                                       			&	1
\\ 763 	&	763	&	1	&	0	&	0	&	0	&	38	&	0	&	 \Z/2 \oplus \Z/3         	&	0	&	1
\\ 795 	&	795	&	2	&	0	&	0	&	0	&	55	&	  \Z/3      																				&	 (\Z/2)^2 \oplus (\Z/3)^2 	&	  \Z/3                                                           			&	2
\\ 955 	&	955	&	1	&	0	&	0	&	0	&	50	&	  (\Z/2)^4  \oplus					  (\Z/3)^2 															&	 \Z/2 \oplus \Z/3         	&	  (\Z/2)^4  \oplus	  (\Z/3)^2                            		&	1
\\ 1003 	&	1003	&	1	&	2	&	0	&	0	&	48	&	  \Z/2  \oplus					  (\Z/3)^2 															&	 \Z/3                     	&	  \Z/2  \oplus	  (\Z/3)^2                              		&	0
\\ 1027 	&	1027	&	1	&	0	&	0	&	0	&	48	&	  (\Z/2)^2  \oplus					  (\Z/3)^3 															&	 \Z/2                     	&	  (\Z/2)^2  \oplus	  (\Z/3)^3                          		&	1
\\ 1227 	&	1227	&	1	&	2	&	0	&	0	&	69	&	  \Z/4  \oplus					  \Z/8  \oplus					  \Z/3 										&	0	&	  \Z/4  \oplus  \Z/8  \oplus \Z/3        	&	
\\ 1243 	&	1243	&	1	&	2	&	0	&	0	&	58	&	  (\Z/3)^4      																				&	 \Z/2 \oplus \Z/3         	&	  (\Z/3)^4                                                     			&	1
\\ 1387 	&	1387	&	1	&	2	&	0	&	0	&	62	&	\Z/2 \oplus (\Z/3)^2 \oplus (\Z/167)^2	&	 \Z/3                     	&	\Z/2 \oplus   (\Z/3)^2  \oplus (\Z/167)^2  	&	0
\\ 1411 	&	1411	&	1	&	2	&	0	&	0	&	64	&	\Z/2 \oplus (\Z/16)^2 \oplus (\Z/43)^2	&	 \Z/3                     	&	\Z/2  \oplus   (\Z/16)^2 \oplus  (\Z/43)^2 	&	
\\ 1507 	&	1507	&	1	&	2	&	0	&	0	&	70	&	  (\Z/3)^2  \oplus					  (\Z/5)^4 															&	 \Z/2 \oplus \Z/3         	&	  (\Z/3)^2  \oplus	  (\Z/5)^4                          		&	1
\\ 1555 	&	1555	&	1	&	0	&	0	&	0	&	80	&	  (\Z/4)^8  \oplus					  (\Z/11)^2 															&	 \Z/2 \oplus \Z/3         	&	  (\Z/4)^8  \oplus	  (\Z/11)^2                        		&	
\\ \hline
\end{array}
$$

\bibliographystyle{amsplain}
 \begin{bibdiv}
\begin{biblist}
\bib{AdemMilgram}{book}{
   author={Adem, Alejandro},
   author={Milgram, R. James},
   title={Cohomology of finite groups},
   series={Grundlehren der Mathematischen Wissenschaften [Fundamental
   Principles of Mathematical Sciences]},
   volume={309},
   edition={2},
   publisher={Springer-Verlag},
   place={Berlin},
   date={2004},
   pages={viii+324},
   isbn={3-540-20283-8},
   review={\MR{2035696 (2004k:20109)}},
}
\bib{AdemSmith}{article}{
    author={Adem, Alejandro},
    author={Smith, Jeff H.},
    title={Periodic complexes and group actions},
    journal={Ann. of Math. (2)},
    volume={154},
    date={2001},
    number={2},
    pages={407--435},
    issn={0003-486X},
    review={\MR{1865976 (2002i:57031)}},
    doi={10.2307/3062102},
}
\bib{BerkoveMod2}{article}{
      author={Berkove, Ethan},
       title={The mod $2$ cohomology of the {B}ianchi groups},
        date={2000},
        ISSN={0002-9947},
     journal={Trans. Amer. Math. Soc.},
      volume={352},
      number={10},
       pages={4585\ndash 4602},
      review={\MR{1675241 (2001b:11043)}},
}
\bib{Brown}{book}{
   author={Brown, Kenneth S.},
   title={Cohomology of groups},
   series={Graduate Texts in Mathematics},
   volume={87},
   note={Corrected reprint of the 1982 original},
   publisher={Springer-Verlag},
   place={New York},
   date={1994},
   pages={x+306},
   isbn={0-387-90688-6},
   review={\MR{1324339 (96a:20072)}},
}

\bib{CP}{article}{
 author={Connolly, Francis X.},
   author={Prassidis, Stratos},
   title={Groups which act freely on ${\bf R}^m\times S^{n-1}$},
   journal={Topology},
   volume={28},
   date={1989},
   number={2},
   pages={133--148},
   issn={0040-9383},
   review={\MR{1003578 (90h:57052)}},
   doi={10.1016/0040-9383(89)90016-5},
}

\bib{HAP}{article}{
   author={Ellis, Graham},
   title={Homological algebra programming},
   conference={
      title={Computational group theory and the theory of groups},
   },
   book={
      series={Contemp. Math.},
      volume={470},
      publisher={Amer. Math. Soc.},
      place={Providence, RI},
   },
   date={2008},
   pages={63--74},
   review={\MR{2478414 (2009k:20001)}},
   doi={10.1090/conm/470/09186},
}

\bib{Henn}{article}{
   author={Henn, Hans-Werner},
   title={The cohomology of ${\rm SL}(3,{\bf Z}[1/2])$},
   journal={$K$-Theory},
   volume={16},
   date={1999},
   number={4},
   pages={299--359},
   issn={0920-3036},
   review={\MR{1683179 (2000g:20087)}},
   doi={10.1023/A:1007746530913},
}

\bib{binaereFormenMathAnn9}{article}{
      author={Klein, Felix},
       title={Ueber bin\"are {F}ormen mit linearen {T}ransformationen in sich selbst},
        date={1875},
        ISSN={0025-5831},
     journal={Math. Ann.},
      volume={9},
      number={2},
       pages={183\ndash 208},
         url={http://dx.doi.org/10.1007/BF01443373},
      review={\MR{1509857}},
}
\bib{Kraemer}{book}{
   author={Kr\"amer, Norbert},
   title={Imagin\"{a}rquadratische Einbettung von Maximalordnungen rationaler Quaternionenalgebren, und die nichtzyklischen endlichen Untergruppen der Bianchi-Gruppen},
   date={2014 \\ \url{http://hal.archives-ouvertes.fr/hal-00720823/en/}},
   language={German},
   address={preprint},
}

\bib{McCleary}{book}{
     AUTHOR = {McCleary, John},
      TITLE = {User's Guide to Spectral Sequences, Second Edition},
     SERIES = {Cambridge Studies in Advanced Mathematics},
     VOLUME = {58},
  PUBLISHER = {Cambridge University Press},
    ADDRESS = {Cambridge, UK},
       YEAR = {2001},
      PAGES = {xiii+561},
       ISBN = {0-521-56759-9},
} 

\bib{AurelPage}{book}{
      author={Page, Aurel},
      title={Computing arithmetic Kleinian groups},
     address = {accepted for publication in Mathematics of Computation,\\ \url{http://hal.archives-ouvertes.fr/hal-00703043}
     },
   date={2014},
}
\bib{Quillen}{article}{
   author={Quillen, Daniel},
   title={The spectrum of an equivariant cohomology ring: I,  II},
   journal={Ann. of Math.},
   volume={94},
   date={1971},
   number={3},
   pages={549--572 and 573--602},
}
\bib{RahmTorsion}{article}{
   author={Rahm, Alexander~D.},
   title={The homological torsion of $\rm{PSL}_2$ of the imaginary quadratic integers},
   journal={Trans. Amer. Math. Soc.},
   volume={365},
   date={2013},
   number={3},
   pages={1603--1635},
   review={\MR{3003276}},
}
\bib{RahmNoteAuxCRAS}{article}{
      author={Rahm, Alexander~D.},
       title={Homology and {$K$}-theory of the {B}ianchi groups},
     journal={ C. R. Math. Acad. Sci. Paris},
   volume={349},
   date={2011}, 
   number={11-12}, 
   pages={615--619},
   review={ \MR{2817377 (2012e:20116)}},
}
\bib{AccessingFarrell}{article}{
      author={Rahm, Alexander~D.},
       title={Accessing the cohomology of discrete groups above their virtual cohomological dimension},
     journal={Journal of Algebra},
   volume={404},
   number={C},
   date={2014}, 
   pages={152--175},
     review = {\hfill {DOI: 10.1016/j.jalgebra.2014.01.025}, \url{http://hal.archives-ouvertes.fr/hal-00618167} },
}
\bib{BianchiGP}{book}{
   author =  {Rahm, Alexander~D.} ,
   title =   {Bianchi.gp},
   address = { Open source program (GNU general public
  license), validated by the CNRS:\\ \url{http://www.projet-plume.org/fiche/bianchigp} \hfill
  Part of the Pari/GP Development Center scripts library, 2010.},
}
\bib{higher_torsion}{article}{
   author={Rahm, Alexander~D.},
   title={Higher torsion in the Abelianization of the full Bianchi groups},
   journal={LMS J. Comput. Math.},
   volume={16},
   date={2013},
   pages={344--365},
   issn={1461-1570},
   review={\MR{3109616}},
}
\bib{RahmFuchs}{article}{
    Author = {Alexander D. {Rahm} and Mathias {Fuchs}},
    Title = {{The integral homology of $\mathrm{PSL}_2$ of imaginary quadratic integers with nontrivial class group}},
    Journal = {{J. Pure Appl. Algebra}},
    ISSN = {0022-4049},
    Volume = {215},
    Number = {6},
    Pages = {1443--1472},
    Year = {2011},
    Publisher = {Elsevier Science B.V. (North-Holland), Amsterdam},
    DOI = {10.1016/j.jpaa.2010.09.005},
    review = { Zbl 1268.11072}
}
\bib{SchwermerVogtmann}{article}{
   author={Schwermer, Joachim},
   author={Vogtmann, Karen},
   title={The integral homology of ${\rm SL}_{2}$ and ${\rm PSL}_{2}$ of
   Euclidean imaginary quadratic integers},
   journal={Comment. Math. Helv.},
   volume={58},
   date={1983},
   number={4},
   pages={573--598},
   issn={0010-2571},
   review={\MR{728453 (86d:11046)}},
   doi={10.1007/BF02564653},
}
\bib{Serre}{article}{
   author={Serre, Jean-Pierre},
   title={Le probl\`eme des groupes de congruence pour SL2},
   language={French},
   journal={Ann. of Math. (2)},
   volume={92},
   date={1970},
   pages={489--527},
   issn={0003-486X},
   review={\MR{0272790 (42 \#7671)}},
}
\bib{Serre-arbres}{book}{
   author={Serre, Jean-Pierre},
   title={Arbres, amalgames, ${\rm SL}_{2}$},
   language={French},
   note={Avec un sommaire anglais;
   R\'edig\'e avec la collaboration de Hyman Bass;
   Ast\'erisque, No. 46. Note that there exists an English translation (MR1954121), \textit{Trees}.},
   publisher={Soci\'et\'e Math\'ematique de France},
   place={Paris},
   date={1977},
   pages={189 pp. (1 plate)},
   review={\MR{0476875 (57 \#16426)}},
}

\bib{Singer}{book}{
   author={Singer, William M.},
   title={Steenrod Squares in Spectral Sequences},
   series={Mathematical Surveys and Mongraphs},
   volume={129},
     Pages = {xi + 155},
    Year = {2006},
    Publisher = {American Mathematical Society (AMS), Providence, RI},
    review = { Zbl 1124.55005}
}

\bib{Vogtmann}{article}{
   author={Vogtmann, Karen},
   title={Rational homology of Bianchi groups},
   journal={Math. Ann.},
   volume={272},
   date={1985},
   number={3},
   pages={399--419},
   issn={0025-5831},
   review={\MR{799670 (87a:22025)}},
   doi={10.1007/BF01455567},
}
\end{biblist}
\end{bibdiv}

\end{document}